

\documentclass[11pt]{amsart}


\usepackage{amsrefs}


\usepackage{epsfig}  		
\usepackage{epic,eepic}       
\usepackage{graphicx}

\usepackage{enumerate}
\usepackage{mathbbol}
\usepackage{amssymb}
\usepackage{braket}
\usepackage{longtable}
\usepackage[colorlinks]{hyperref}
\usepackage{caption}
\usepackage{mathtools}
\usepackage{microtype}
\usepackage{soul,xcolor}
	\setstcolor{red}



\newenvironment{claimproof}[1][Proof of Claim]{\begin{proof}[#1]}{\end{proof}}


\newtheorem{lemma}{Lemma}[section]
\newtheorem*{lemma*}{Lemma}
\newtheorem{theorem}[lemma]{Theorem}
\newtheorem*{theorem*}{Theorem}

\newtheorem{proposition}[lemma]{Proposition}

\newtheorem*{proposition*}{Proposition}
\newtheorem{fact}[lemma]{Fact}
\newtheorem*{fact*}{Fact}

\newtheorem*{notation*}{Notation}
\newtheorem*{conventions*}{Conventions}
\newtheorem{remark}[lemma]{Remark}
\newtheorem*{remark*}{Remark}

\newtheorem*{corollary*}{Corollary}
\newtheorem{conjecture}{Conjecture}
\newtheorem*{conjecture*}{Conjecture}

\newtheorem*{problem*}{Problem}

\newtheorem*{question*}{Question}

\newtheorem{assumption*}{Assumption}




\theoremstyle{definition}
\newtheorem{example}{Example}
\newtheorem*{example*}{Example}
\newtheorem{definition}[lemma]{Definition}
\newtheorem*{definition*}{Definition}




\theoremstyle{remark}
\newtheorem{claim}{Claim}
\newtheorem*{claim*}{Claim}

\newtheorem*{case*}{Case}
\newtheorem*{construction*}{Construction}

\newtheorem*{exercise*}{Exercise}


\numberwithin{equation}{section}


\newcommand{\Z}{\mathbb{Z}}
\newcommand{\R}{\mathbb{R}}  
\newcommand{\C}{\mathbb{C}}
\newcommand{\Q}{\mathbb{Q}}
\newcommand{\I}{\mathcal{I}}

\newcommand{\U}{\mathcal{U}}

\newcommand{\bs}{\backslash}

\renewcommand\AA{{\mathcal A}}

\newcommand\CC{{\mathcal C}}

\newcommand\II{{\mathcal I}}
\newcommand\JJ{{\mathcal J}}



\newcommand\<{\langle}
\renewcommand\>{\rangle}
\newcommand\az{{\aleph_0}}

\newcommand{\tp}{\mathrm{tp}}
\newcommand{\qftp}{\mathrm{qftp}}

\newcommand{\Wr}{\mathrm{\thinspace Wr \thinspace}} 

\newcommand{\ra}{\rightarrow}
\newcommand{\Ra}{\Rightarrow}

\def\abar{\bar{a}}

\def\bbar{\bar{b}}
\def\cbar{\bar{c}}
\def\dbar{\bar{d}}
\def\ebar{\bar{e}}

\def\hbar{\bar{h}}

\def\mbar{\bar{m}}
\def\nbar{\bar{n}}

\def\xbar{\bar{x}}
\def\ybar{\bar{y}}
\def\zbar{\bar{z}}

\def\<{\langle}
\def\>{\rangle}

\renewcommand{\C}{\mathfrak{C}}
\title{Characterizations of monadic NIP}
\author{Samuel Braunfeld}
\author{Michael C. Laskowski$^*$}
\thanks{$^*$Partially supported
	by NSF grant DMS-1855789}
\subjclass{03C45}

\def\rtp{{\rm rtp}}

\def\qftp{{\rm qftp}}

\def\P{{\mathcal P}}
\def\J{{\mathcal J}}

\newcommand{\joined}{joined tuple-coding configuration}

\begin{document}

	\begin{abstract}
		We give several characterizations of when a complete first-order theory $T$ is monadically NIP, i.e. when expansions of $T$ by arbitrary unary predicates do not have the independence property. The central characterization is a condition on finite satisfiability of types. Other characterizations include decompositions of models, the behavior of indiscernibles, and a forbidden configuration. As an application, we prove non-structure results for  hereditary classes of finite substructures of non-monadically NIP models that eliminate quantifiers.
	\end{abstract}
	
	\maketitle
	
		\textcolor{red}{An appendix has been added, containing two corrections. The first involves replacing indiscernible-triviality with {\em endless indiscernible triviality} (both defined in Definition \ref{def:ind triv}) in Theorem \ref{thm:main}. The second withdraws the claimed proof that $Age(T)$ is not 4-wqo in Theorem \ref{thm:growth}. We have added red text and strikethroughs noting the affected results.}
	
	\section{Introduction}

	It is well known that many first order theories whose models are tame can become unwieldy after naming a unary predicate.  Arguably the best known example of this is the field $(\C,+,\cdot)$ of complex numbers.  Its theory is uncountably categorical, but after naming a predicate for the real numbers, the expansion becomes unstable.  A more extreme example is the theory $T$ of infinite dimensional vector spaces over a finite field, in a relational language.  The theory $T$ is totally categorical, but if, in some model $V$, one names a basis $B$, then by choosing specified sum sets of basis elements, one can code arbitrary bipartite graphs in expansions of $V$ by unary predicates.
	
	As part of a larger project in \cite{BS}, Baldwin and Shelah undertook a study of this phenomenon.    They found that a  primary dividing line is whether $T$ {\em admits coding} i.e., there are three subsets $A,B,C$ of a model of $T$ and a formula $\phi(x,y,z)$ that defines a pairing function $A\times B\rightarrow C$.   
	If one can find such a configuration in a model $M$ of $T$, some monadic expansions of $M$ are wild.  The primary focus in \cite{BS} was
	{\em monadically stable} theories, i.e. theories that remain stable after arbitrary expansions by unary predicates.   Clearly, the two theories  described above are  stable, but not monadically stable.  
	They offered a  characterization of monadically stable theories within the stable theories via a condition on the behavior of non-forking. This allowed them to prove that monadic stability yields a dividing line within stable theories: models of monadically stable theories are well-structured and admit a nice decomposition into trees of submodels, while if a theory is stable but not monadically stable then it encodes arbitrary bipartite graphs in a unary expansion,
	and so is not even monadically NIP.
	
	A theory  $T$ is NIP if it does not have the independence property, and is monadically NIP if every expansion of a model of $T$ by unary predicates is also NIP.    The behavior of NIP theories has been extensively studied, see e.g., \cite{Pierre}.
	Soon after \cite{BS}, Shelah further studied monadically NIP theories in \cite{Hanf}, where he showed they satisfy a condition on the behavior of finite satisfiable types paralleling the condition on the behavior of non-forking in monadically stable theories. He was then able to use this to produce a linear decomposition of models of monadically NIP theories, akin to a single step of the tree decomposition in monadically stable theories.
	
	We dub Shelah's condition on the behavior of finite satisfiability the {\em f.s. dichotomy}, and we consider it to be the  fundamental property expressible in the original language $L$ describing  the dichotomous behavior  outlined above. We show the f.s. dichotomy characterizes monadically NIP theories and provide several other characterizations, including admitting a linear decomposition in the style of Shelah, a forbidden configuration, and conditions on the behavior of indiscernible sequences after adding parameters. Definitions for the following theorem may be found in Definitions \ref{def:tuple code}, \ref{def:fs dich}, \ref{def:decomp}, and \ref{def:ind triv}. Of note is that all but the first two conditions refer to the theory $T$ itself, rather than unary expansions.
	
	\textcolor{red}{In Theorem \ref{thm:main}, ``indiscernible-triviality'' has been replaced with ``endless indiscernible triviality''.}
	
	\begin{theorem} \label{thm:main}   The following are equivalent for a complete theory $T$ with an infinite model.
		\begin{enumerate}
			\item  $T$ is monadically NIP.
			\item  No monadic expansion of $T$ admits coding.
			\item  $T$ does not admit coding on tuples.
			\item  $T$ has the f.s.\ dichotomy.
			\item  For all $M^* \models T$ and $M, N \preceq M^*$, every partial $M$-f.s.\ decomposition of $N$ extends to an (irreducible) $M$-f.s.\ decomposition of $N$.
			\item  $T$ is dp-minimal and has \textcolor{red}{endless} indiscernible triviality.
		\end{enumerate}
	\end{theorem}
	
	We believe that monadic NIP (or perhaps a quantifier-free version) is an important dividing line in the combinatorics of hereditary classes, and provides a general setting for the sort of decomposition arguments common in structural graph theory. For example, see the recent work on bounded twin-width  in the ordered binary case, where it coincides with monadic NIP \cites{ST, TW4}. Here, we mention the following conjecture, adding monadic NIP to a question of Macpherson \cite{MacHom}*{Question 2.2.7}.
	\begin{conjecture} \label{conj:hom}
		The following are equivalent for a countable homogeneous $\omega$-categorical relational structure $M$.
		\begin{enumerate}
			\item $M$ is monadically NIP.
			\item The (unlabeled) growth rate of $Age(M)$ is at most exponential.
			\item $Age(M)$ is well-quasi-ordered under embeddability, i.e. it has no infinite antichain.
		\end{enumerate}
	\end{conjecture}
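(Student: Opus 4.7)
The plan is to use Theorem~\ref{thm:main} as the bridge between condition~(1) and the combinatorial conditions~(2) and~(3), applying its structural consequences to the homogeneous $\omega$-categorical $M$ while exploiting homogeneity to transfer information between $M$ and its finite substructures. Since $M$ is $\omega$-categorical, each size class in $Age(M)$ has only finitely many isomorphism types, so the task is to control how these types assemble.

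For the implications starting from~(1), the key tool is the linear $M$-f.s.\ decomposition of clause~(5) of Theorem~\ref{thm:main}. One would show that, restricted to finite substructures of $M$ and combined with $\omega$-categoricity, this produces a linear arrangement of ``pieces'' each drawn from a finite palette of bounded-size local types. Counting such decorated sequences of length $O(n)$ gives at most $c^n$ isomorphism types, yielding (1)$\Rightarrow$(2). For (1)$\Rightarrow$(3), WQO of $Age(M)$ reduces to WQO of finite sequences over a finite alphabet under an appropriate embedding relation, which is Higman's lemma; in structural cases where a tree-like refinement is available, Kruskal's tree theorem would give the same conclusion.

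For the implications from $\neg$(1), I would apply clauses~(2) and~(3) of Theorem~\ref{thm:main}: the theory of $M$ admits coding on tuples, and moreover some monadic expansion admits coding. Homogeneity and quantifier elimination for $M$ should let one realize this coding on finite tuples inside $M$ itself. From such a realized pairing one embeds every finite bipartite graph as an induced configuration on an $O(n)$-size subset of $M$. This produces $2^{\Omega(n^2)}$ distinct labeled configurations, hence superexponentially many unlabeled ones, refuting~(2); and by encoding a classical infinite antichain of finite graphs under induced-subgraph embedding, one obtains an infinite antichain in $Age(M)$, refuting~(3).

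The principal obstacle is the $\neg$(1)$\Rightarrow\neg$(2) direction. The coding-on-tuples configuration of Definition~\ref{def:tuple code} only guarantees a pairing $A \times B \to C$ using parameters in a sufficiently saturated model, whereas $Age(M)$ sees only finite parameter-free substructures of $M$. One must upgrade this model-theoretic coding to a parameter-free, finitely witnessed coding on elements of $M$, dense enough to realize all bipartite graphs on polynomial-size sets. Converting abstract model-theoretic wildness into a combinatorial abundance of finite substructures of $M$---rather than of its monster model or of a monadic expansion---is the crux of the conjecture. A reasonable first attempt would start from the non-structure results for quantifier-eliminating hereditary classes alluded to in the abstract, which presumably already yield a superpolynomial growth lower bound, and strengthen them to superexponential using homogeneity-based realization arguments exploiting the automorphism-richness of $\omega$-categorical $M$.
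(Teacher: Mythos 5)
This statement is an open conjecture, not a theorem of the paper: the authors explicitly present it as adding monadic NIP to a question of Macpherson, and they prove only partial results toward it. Specifically, Theorem~\ref{thm:growth} (proved as Theorem~\ref{thm:fin}) shows that if $T$ is not monadically NIP (under quantifier elimination, which holds for homogeneous structures), then $Age(T)$ grows at least like $(n/k)!$ and is not $4$-wqo. This confirms $(2)\Rightarrow(1)$ in full --- note that $(n/k)!$ is already superexponential, so no further strengthening of the growth bound is needed, contrary to your closing remark that the known lower bound is merely ``superpolynomial.'' It confirms $(3)\Rightarrow(1)$ only in a weak form, because failure of $4$-wqo is strictly weaker than failure of wqo: the paper's antichain lives in an expansion by unary predicates marking $A$, $B$, $C$ and the parameters, and without those predicates the encoded cycles need not form an antichain under plain embeddability. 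Your sketch for $\neg(1)\Rightarrow\neg(3)$ silently assumes this marking can be dispensed with, which is precisely the unresolved gap.

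The directions $(1)\Rightarrow(2)$ and $(1)\Rightarrow(3)$ remain entirely open, and your proposed argument for them does not work as stated. The $M$-f.s.\ decomposition of clause (5) of Theorem~\ref{thm:main} is a linear decomposition of one \emph{model} $N$ over another model $M$ into pieces $A_i$ that are in general infinite and of unbounded complexity relative to $A_{<i}$; nothing in the paper bounds the size of the pieces or reduces a finite substructure of $M$ to a word over a finite alphabet. $\omega$-categoricity gives finitely many $n$-types for each $n$, but the ``finite palette of bounded-size local types'' you invoke is exactly the structural statement one would need to prove, and Higman's lemma or Kruskal's theorem cannot be applied until it is established. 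In short: your plan correctly identifies Theorem~\ref{thm:main} as the bridge and correctly locates the crux in converting model-theoretic coding into parameter-free finite configurations, but every step beyond what Theorem~\ref{thm:growth} already provides is a restatement of the open problem rather than a proof.
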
 
	
	From Theorem \ref{thm:main}, we see that if $T$ is not monadically NIP then it admits coding on tuples. This allows us to prove the following non-structure theorem in \S \ref{s:fin} (with Definition \ref{def:fin} defining the relevant terms), in particular confirming $(2) \Rightarrow (1)$ and a weak form of $(3) \Rightarrow (1)$ from the conjecture, although without any assumption of $\omega$-categoricity.
	
		\textcolor{red}{In Theorem \ref{thm:growth},  the claim regarding 4-wqo remains unproven.}
	
	\begin{theorem} \label{thm:growth}
		Suppose $T$ is a complete theory with quantifier elimination  in a relational language with finitely many constants.
		If $T$ is not monadically NIP, then  $Age(T)$ has growth rate asymptotically greater than $(n/k)!$ for some $k \in \omega$ \st{and is not 4-wqo}.
	\end{theorem}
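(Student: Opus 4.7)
The argument proceeds by leveraging the equivalence $(1) \Leftrightarrow (3)$ in Theorem \ref{thm:main}: since $T$ is not monadically NIP, $T$ admits coding on tuples. Fix a formula $\phi(\bar x,\bar y,\bar z)$ and infinite sequences of $k$-tuples $(\bar a_i), (\bar b_j), (\bar c_{i,j})$ in some $M \models T$ witnessing the coding, so that $\phi(\bar a_i, \bar b_j, \bar c_{i',j'})$ holds iff $(i,j)=(i',j')$. Because $T$ has quantifier elimination in a relational language, the atomic diagram of a finite enumeration from $M$ determines its isomorphism type, so the coding relation is recoverable inside any finite substructure.

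For the growth rate bound, the plan is to exhibit, for each $n$, at least $n!$ pairwise non-isomorphic substructures of $Age(T)$ of size $O(kn)$. The naive encoding of $\sigma \in S_n$ by $X_\sigma := \{\bar a_i\}_{i\leq n} \cup \{\bar b_j\}_{j\leq n} \cup \{\bar c_{i,\sigma(i)} : i \leq n\}$ fails, because the $S_n \times S_n$-symmetry on the $\bar a$- and $\bar b$-sides collapses all $X_\sigma$ to a single isomorphism type. I would break this symmetry by rigidifying one side using the coding mechanism itself --- for instance, by including additional coding witnesses to install a definable successor chain on $\bar a_1, \dots, \bar a_n$ (applying coding on tuples iteratively, or using a second coordinated pattern). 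Once the $\bar a_i$'s are definably linearly ordered in $X_\sigma$, distinct $\sigma$ yield non-isomorphic substructures, giving $n!$ isomorphism types of size at most $Ckn$ for some constant $C$. Choosing an integer $k' > Ck$, one then obtains growth asymptotically exceeding $(m/k')!$, since $n! > (Ckn/k')!$ for $k' > Ck$.

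For the failure of 4-wqo, the strategy is to use the coding to embed a classical infinite antichain into the labeled version of $Age(T)$. Coding on tuples allows an arbitrary bipartite graph on $n+n$ vertices to be realized as a substructure by choosing which $\bar c_{i,j}$ to include. The infinite antichain of even cycles $\{C_{2n} : n \geq 2\}$, viewed as bipartite graphs under induced bipartite embedding, encodes into $Age(T)$ using at most four unary predicates to mark the sorts (the $\bar a$-side, the $\bar b$-side, the $\bar c$-witnesses, and any auxiliary rigidifying tuples). Labeled embeddings of the encoded structures correspond exactly to bipartite embeddings of the original cycles, so the encoded family remains an infinite antichain, witnessing that $Age(T)$ is not 4-wqo.

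The principal technical obstacle, in both parts, is symmetry management. For the growth rate, one must rigidify the encoding using only the coding data already in $T$, to prevent the $S_n \times S_n$-action from collapsing the count. For the antichain, one must ensure the encoding is faithful in the labeled hereditary class, so that embeddings of the encoded structures lift to embeddings of the originals. Both obstacles are addressed by exploiting the flexibility built into coding on tuples to superpose multiple coordinated coding patterns inside a single finite substructure.
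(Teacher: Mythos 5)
Your starting point is correct: $T$ not monadically NIP gives coding on tuples, $\phi$ may be taken quantifier-free by QE, and the strategy should be to encode graph-like combinatorial data as finite substructures.  But there are two significant problems.

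First, the ``rigidification'' step is both a gap and a detour.  You observe (correctly) that encoding permutations $\sigma\in S_n$ as $X_\sigma$ collapses under the $S_n\times S_n$-symmetry, and you propose to fix this by ``installing a definable successor chain on $\abar_1,\dots,\abar_n$'' using the coding ``iteratively, or using a second coordinated pattern.''  The tuple-coding configuration only gives you a pairing relation $\phi(\abar_i,\bbar_j,c)$ holding iff $c=c_{i,j}$; it furnishes no binary relation among the $\abar_i$'s, and it is not at all clear how iterating it produces a definable successor.  This is precisely the kind of additional structure the hypothesis does not guarantee.  The paper sidesteps the issue entirely: rather than encoding permutations (and then fighting to break symmetry), it encodes arbitrary unlabeled bipartite graphs with $n$ edges and no isolated vertices.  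Two non-isomorphic such graphs yield non-isomorphic encoded structures, and the count of these is already $\ge (n/5)!$ (citing Ravsky), so no rigidification is needed.

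Second, and more fundamentally, the claim that ``the coding relation is recoverable inside any finite substructure'' because $\phi$ is quantifier-free is too quick.  The relation you actually need to recover is the singleton-level pairing $\phi^*(a,b,c)$ from Lemma~\ref{lemma:tuple to 1}, which contains existential and universal quantifiers ranging over $A$, $B$, $C$; these do not automatically relativize correctly to a finite substructure.  The paper's proof spends most of its effort on this point: it passes to a \emph{regular} configuration with tuples of minimal total length, proves it has \emph{unique witnesses up to permutation} (Lemma~\ref{lemma:uw}), and then constructs the finite substructures in $\AA$ to include extra ``boundary'' elements $c_{i\pm\epsilon,j}$, $c_{i,j\pm\epsilon}$ so that the $\forall z'$-clause of $\phi^*$ is witnessed locally whenever it fails.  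Without this machinery there is no reason the encoding is faithful.  Your proposal does not address this at all.

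The 4-wqo part is closer: both you and the paper encode even cycles and use four unary predicates.  However, the paper's fourth predicate absorbs the finitely many parameters of $\phi$ (via Lemma~\ref{lemma:wqo const}), whereas you allocate it to ``auxiliary rigidifying tuples,'' which, per the above, should not be needed; and you do not account for the parameters.  Once the unique-witnesses/boundary construction is in place, the antichain argument goes through exactly as you describe.
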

	
	We also show the following, partially explaining the importance of monadic model-theoretic properties for the study of hereditary classes.
	
	\begin{theorem} \label{thm:icoll}
		Suppose $T$ is a complete theory with quantifier elimination  in a relational language with finitely many constants.
		Then $Age(T)$ is NIP if and only if $T$ is monadically NIP, and $Age(T)$ is stable if and only if $T$ is monadically stable.
	\end{theorem}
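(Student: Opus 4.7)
My plan is to handle both equivalences by first using quantifier elimination to move freely between formulas on models of $T$ and formulas on finite substructures in $Age(T)$, and then invoking Theorem~\ref{thm:main}, together with the Baldwin--Shelah characterization of monadic stability, for the reverse directions. The forward directions are almost tautological; the real work is in the converses.

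If $T$ is monadically NIP then $T$ is NIP, and every $L$-formula is equivalent modulo $T$ to a quantifier-free formula, since $L$ is relational with finitely many constants and $T$ has QE. QF formulas are preserved under substructure, and every $A \in Age(T)$ embeds into a model of $T$, so a uniform VC-dimension bound for each QF formula on models of $T$ transfers to $Age(T)$. Hence $Age(T)$ is NIP. Replacing ``shatter'' by ``half-graph'' throughout gives the analogous monadically stable $\Rightarrow Age(T)$ stable statement.

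For the NIP converse I argue contrapositively. Assume $T$ is not monadically NIP. By Theorem~\ref{thm:main}(3), $T$ admits coding on tuples, witnessed by some $L$-formula $\phi$ which by QE we may take quantifier-free. The constructions in the proof of Theorem~\ref{thm:growth} produce, from this coding on tuples, finite substructures of a monster model $\mathfrak{C} \models T$---hence members of $Age(T)$---on which a single QF formula built from $\phi$ shatters sets of unbounded size. Hence $Age(T)$ is not NIP. For the stable converse, again contrapositively, suppose $T$ is not monadically stable. If $T$ is also not monadically NIP, the preceding paragraph yields $Age(T)$ not NIP and so not stable. Otherwise $T$ is monadically NIP, so by Theorem~\ref{thm:main}(3) $T$ does not admit coding on tuples; combined with Baldwin--Shelah's characterization---within stable theories, monadic stability is equivalent to the absence of coding---this forces $T$ itself to be unstable. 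Then some $L$-formula has the order property, which by QE may be taken quantifier-free, producing arbitrarily large half-graphs inside finite substructures of models of $T$, so $Age(T)$ is not stable.

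The only genuinely non-routine step is the extraction, in the NIP case, of a single QF formula with unbounded VC dimension on $Age(T)$ from ``$T$ admits coding on tuples''. This is the combinatorial heart of the construction underlying Theorem~\ref{thm:growth}, which already produces configurations inside $Age(T)$ witnessing super-factorial growth and failure of 4-wqo. My plan is to reuse that construction and observe that the same configurations also exhibit QF shattering of unbounded size.
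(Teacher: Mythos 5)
Your stable-converse logic (splitting on whether $T$ is monadically NIP, then using Baldwin--Shelah within stable theories) is sound and essentially the same as the paper's, and you correctly identify that Theorem~\ref{thm:main}(3) plus quantifier elimination is the engine of the NIP converse. But there are two genuine gaps, one in each direction.

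\textbf{Forward direction (``monadically NIP $\Rightarrow Age(T)$ NIP'').} Your argument only establishes that \emph{quantifier-free} formulas have bounded VC-dimension on $Age(T)$, by transfer from models of $T$. This is not enough: the definition of $Age(T)$ having IP quantifies over \emph{all} formulas, and the QE equivalences hold modulo $T$ --- i.e.\ in models of $T$ --- not in finite substructures. A formula $\phi$ that shatters on $Age(T)$ need not coincide with its QE-reduct on finite structures, so you cannot replace it by a quantifier-free formula and invoke your VC bound. The paper sidesteps this by arguing contrapositively: if some $\phi$ encodes all finite bipartite graphs on $Age(T)$, compactness yields a model $N$ of $T_\forall$ in which $\phi$ encodes the generic bipartite graph; embed $N$ into $M\models T$, name $N$ by a unary predicate, and relativize $\phi$. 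That produces a unary expansion with IP, so $T$ is not monadically NIP. This compactness-plus-relativization step is exactly what your proposal is missing, and it is the step that explains why monadic properties (rather than plain NIP/stability of $T$) are the right hypotheses.

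\textbf{Converse direction (``not monadically NIP $\Rightarrow Age(T)$ IP'').} You gesture at the construction in Theorem~\ref{thm:growth} and assert that ``a single QF formula built from $\phi$'' shatters inside $Age(T)$. Two problems. First, the formula that actually works is $R(\bar x,\bar y;\bar m):=\exists z\,(\phi(\bar x,\bar y,z;\bar m)\wedge z\notin\bar m)$, which carries an essential existential quantifier; because the $c_{i,j}$ witness depends on both $i$ and $j$, you cannot absorb $z$ into one side of the pair to keep the formula quantifier-free. (This is harmless for the conclusion --- IP for $Age(T)$ allows arbitrary formulas --- but the QF claim is false.) Second, and more substantively, the argument needs the \emph{tidiness} property of Remark~\ref{r:tidy} to know that the existential in $R$ is witnessed only by the intended $c_{i,j}$ and not spuriously by an element of $\bigcup\II\cup\bigcup\JJ$ already present in $M_G$. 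Without tidiness the encoding can break. The construction in Theorem~\ref{thm:growth} you cite instead uses the formula $\phi^*$ of Lemma~\ref{lemma:tuple to 1}, which refers to new unary predicates $A,B,C$ and hence lives in $Age(T^*)$, not $Age(T)$; pointing at it does not close the gap. The proof of the collapse theorem is actually the \emph{simpler} of the two constructions, but it leans on tidiness, which your sketch does not engage with.
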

	
	In Section 2, we review basic facts about finite satisfiability, and introduce {\em $M$-f.s. sequences}, which are closely related to, but more general than Morley sequences. The results of this section apply to an arbitrary theory, and so may well be of interest beyond monadic NIP. Section 3 introduces the f.s. dichotomy and proves the equivalence of (3)-(6) from Theorem \ref{thm:main}. Much of these two sections is an elaboration on the terse presentation of \cite{Hanf}, although there are new definitions and results, particularly in Section 3.2, which deals with the behavior of indiscernibles in monadically NIP theories. In Section 4 we finish proving the main theorem by giving a type-counting argument that the f.s. dichotomy implies monadic NIP, and by showing that if $T$ admits coding on tuples then it admits coding in a unary expansion. In Section 5, we prove Theorems \ref{thm:growth} and \ref{thm:icoll}.
	
	We are grateful to Pierre Simon, with whom we have had numerous insightful discussions about this material.  In particular, the relationship between monadic NIP and indiscernible-triviality was suggested to us by him.

	\subsection{Notation}
	Throughout this paper, we work in $\C$, a large, sufficiently saturated, sufficiently homogeneous model of a complete theory $T$.  
	We routinely consider $\tp(A/B)$ when $A$ is an infinite set.  To make this notion precise, we (silently) fix an enumeration $\abar$ of $A$ (of ordinal order type)
	and an enumeration $\xbar$ with $\lg(\xbar)=\lg(\abar)$.  Then $\tp(A/B)=\{\theta(\xbar',\bbar):\C\models\theta(\abar',\cbar)$  for all subsequences $\xbar'\subseteq\xbar$ and 
	$\abar'$ is the corresponding subsequence of $\abar\}$.

	\section{$M$-f.s.\ sequences}
	
	Forking independence and Morley sequences are fundamental tools in the analysis of monadically stable theories in \cite{BS}. These are less well-behaved outside the stable setting, but in any theory we may view `$\tp(A/MB)$ is finitely satisfiable in $M$' as a statement that $A$ is (asymmetrically) independent from $B$ over $M$. Following \cite{Hanf}, we will use finite satisfiability in place of non-forking, and indiscernible $M$-.f.s. sequences in place of Morley sequences. Throughout Section~2, we make no assumptions about the complexity of $Th(\C)$.
	
	\subsection{Preliminary facts about $M$-f.s.\ sequences}
	
	For the whole of this section, fix a small $M\preceq\C$ (typically, $|M|=|T|$). 
	
	\begin{definition}  Suppose $B\supseteq M$.  Then for any $A$ (possibly infinite) we say $\tp(A/B)$ is {\em finitely satisfied in $M$} if, for all $\theta(\ybar,\bbar)\in\tp(A/B)$,
		there is $\mbar\in M^{\lg(\ybar)}$ such that $\C\models\theta(\mbar,\bbar)$.
	\end{definition}
	
	One way of producing finitely satisfiable types in $M$ comes from {\em average types.}
	
	\begin{definition}
		Suppose $\xbar$ is a possibly infinite tuple.  For any ultrafilter $\U$ on $M^{\lg(\xbar)}$ and any $B\supseteq M$,
		$$Av(\U,B)=\set{\phi(\xbar,\bbar):\set{\mbar\in M^{\lg(\xbar)}:\C\vDash\phi(\mbar,\bbar)}\in\U}$$
	\end{definition}
	
	It is easily checked that $Av(\U,B)$ is a complete type over $B$ that is finitely satisfied in $M$.  We record a few basic facts about types that are finitely satisfied in $M$.
	Proofs can be found in either Section VII.4 of \cite{Shc} or in \cite{Pierre}.
	
	\begin{fact} \label{basic}   Let $M$ be any model.
		\begin{enumerate}
			\item  For any set $B\supseteq M$ and any $p(\xbar)\in S(B)$ ($\xbar$ may be an infinite tuple),
			$p$ is finitely satisfied in $M$ if and only if $p=Av(\U,B)$ for some ultrafilter $\U$ on $M^{\lg(\xbar)}$.
			\item  Suppose $\Gamma(\xbar,B)$ is any set of formulas, closed under finite conjunctions, and each of which is realized in $M$.
			Then
			there is a complete type $p\in S(B)$ extending $\Gamma$ that is finitely satisfied in $M$.
			\item  (Non-splitting)  If $p\in S(B)$ is finitely satisfied in $M$, then $p$ does not split over $M$, i.e., if $\bbar,\bbar'\subseteq B$ and $\tp(\bbar/M)=\tp(\bbar'/M)$,
			then for any $\phi(\xbar,\ybar)$, $\phi(\xbar,\bbar)\in p$ if and only if $\phi(\xbar,\bbar')\in p$.
			\item  (Transitivity)  If $\tp(B/C)$ and $\tp(A/BC)$ are both finitely satisfied in $M$, then so is $\tp(AB/C)$.
		\end{enumerate}

	\end{fact}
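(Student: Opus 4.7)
The plan is to establish (2) first via a standard ultrafilter construction, derive (1) immediately, and then read off (3) and (4) from the resulting average-type characterization. For (2), I would consider the family $\FF = \{X_\phi : \phi(\xbar, \bbar) \in \Gamma\}$ of subsets of $M^{\lg(\xbar)}$ defined by $X_\phi = \{\mbar \in M^{\lg(\xbar)} : \C \models \phi(\mbar, \bbar)\}$. This family has the finite intersection property: it is closed under finite intersections because $\Gamma$ is closed under finite conjunctions, and every $X_\phi$ is nonempty by the realization hypothesis. Extending $\FF$ to an ultrafilter $\U$ on $M^{\lg(\xbar)}$, the type $Av(\U, B)$ is complete, extends $\Gamma$ by construction, and is finitely satisfied in $M$ since each $\phi(\xbar, \bbar) \in Av(\U, B)$ has $X_\phi \in \U$ nonempty. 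For (1), the nontrivial direction then drops out: given $p \in S(B)$ finitely satisfied in $M$, applying (2) with $\Gamma$ the conjunction-closure of $p$ (every formula of which is realized in $M$ by finite satisfiability) yields a complete f.s.\ type $Av(\U, B) \supseteq p$, which must equal $p$ by completeness. The reverse direction is the ultrafilter computation already made.

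For (3), I would write $p = Av(\U, B)$ and fix a formula $\phi(\xbar, \ybar)$ together with $\bbar, \bbar' \in B$ satisfying $\tp(\bbar/M) = \tp(\bbar'/M)$. For any $\mbar \in M^{\lg(\xbar)}$, whether $\C \models \phi(\mbar, \bbar)$ holds is determined by the type of $\bbar$ over $M$ (view $\phi(\mbar, \ybar)$ as a formula with parameters in $M$), so the sets $X_{\phi(\xbar, \bbar)}$ and $X_{\phi(\xbar, \bbar')}$ coincide; one therefore lies in $\U$ iff the other does.

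For (4), given $\phi(\ybar_1, \ybar_2, \cbar) \in \tp(AB/C)$ with $\cbar \in C$, I would apply finite satisfiability of $\tp(A/BC)$ to $\phi(\ybar_1, \bbar, \cbar) \in \tp(A/BC)$ (where $\bbar$ is the finite subtuple of $B$ actually appearing) to obtain $\mbar_1 \in M$ with $\C \models \phi(\mbar_1, \bbar, \cbar)$. Since $M \subseteq C$, the formula $\phi(\mbar_1, \ybar_2, \cbar)$ has parameters in $C$ and is realized by $\bbar$, so lies in $\tp(B/C)$; finite satisfiability of the latter then yields $\mbar_2 \in M$ with $\C \models \phi(\mbar_1, \mbar_2, \cbar)$, as needed. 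No step here should pose a genuine obstacle; the one bookkeeping subtlety is handling infinite $\xbar$ in (1)--(2), where the ultrafilter lives on $M^{\lg(\xbar)}$ viewed as the set of functions from the index set into $M$, but since each formula mentions only finitely many variables this causes no trouble.
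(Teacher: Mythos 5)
Your proposal is correct, and all four parts use the standard arguments. The paper does not prove this fact itself but instead cites Shelah's \emph{Classification Theory} (Section VII.4) and Simon's NIP book; your ultrafilter construction for (2), the derivation of (1) from it, the computation showing $X_{\phi(\xbar,\bbar)}=X_{\phi(\xbar,\bbar')}$ for (3), and the two-step pull-down for (4) are exactly the proofs one finds in those references, and your closing remark about handling infinite $\xbar$ via cylinder sets on $M^{\lg(\xbar)}$ is the right observation.
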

	\begin{definition} [$M$-f.s. sequence] \label{AJ} With $M$ fixed as above, let  $(I,\le)$ be any linearly ordered index set.
		\begin{itemize}
			\item  Suppose $\<A_i:i\in I\>$ is any sequence of sets, indexed by $(I,\le)$.  For $J\subseteq I$, $A_J$ denotes $\bigcup_{j\in J} A_j$, and for $i^*\in I$,
			$A_{<i^*}$ denotes $\bigcup_{i<i^*} A_i$.  $A_{\le i^*}$ and $A_{>i^*}$ are defined analogously.
			\item  For $C\supseteq M$, an {\em $M$-f.s.\ sequence over $C$},  is a  sequence of sets $\<A_i:i\in I\>$ such that $\tp(A_i/A_{<i}C)$ is finitely satisfied in $M$ for every $i\in I$.
			When $C=M$ we simply say $\<A_i:i\in I\>$ is an $M$-f.s.\ sequence.
		\end{itemize}
	\end{definition}
	
	Note that for any $C\supseteq M$, $\<A_i:i\in I\>$ is an $M$-f.s.\ sequence over $C$ if and only if the concatenation
	$\<C\>\smallfrown\<A_i:i\in I\>$ is an $M$-f.s.\ sequence.
	
	We note two useful operations on $M$-f.s.\ sequences over $C$, `Shrinking' and `Condensation'.
	
	\begin{definition}
		Suppose $C\supseteq M$ and $\<A_i:i\in I\>$ is an $M$-f.s.\ sequence over $C$.
		\begin{enumerate}
			\item `Shrinking:'  For every $J\subseteq I$, for all $A_j'\subseteq A_j$, and for all $C'$ with $C\supseteq C'\supseteq M$, we say
			$\<A_j':j\in J\>$ as a sequence over $C'$ is obtained by {\em shrinking} from $\<A_i:i\in I\>$ as a sequence over $C$.
			\item Condensation:'  Suppose $\pi:I\rightarrow J$ is a condensation, i.e., a surjective map with each $\pi^{-1}(j)$ a convex subset of $I$.
			For each $j\in J$, let $A_j^*:=\bigcup\set{A_i:i\in \pi^{-1}(j)}$. We say $\<A_j^*:j\in J\>$ as a sequence over $C$ is obtained by {\em condensation} from $\<A_i:i\in I\>$ as a sequence over $C$.
		\end{enumerate}
	\end{definition}
	
	In particular, removing a set of $A_i$'s from the sequence is an instance of Shrinking.
	
	\begin{lemma}  \label{manipulations}  Suppose $C\supseteq M$ and $\<A_i:i\in I\>$ is an $M$-f.s.\ sequence over $C$. Then Shrinking and Condensation both preserve being an $M$-f.s. sequence over $C$. In particular, for any partition $I=J\sqcup K$ into convex pieces, the two-element sequence $\<A_J,A_K\>$ is an $M$-f.s.\ sequence over $C$.
	\end{lemma}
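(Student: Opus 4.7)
The plan is to verify Shrinking and Condensation separately, then obtain the two-piece partition statement as a special case of Condensation applied to $\pi: I \to \{0,1\}$ with fibers $J$ and $K$. Throughout, I will use the convention from the Notation subsection that $\tp(A/B)$ for infinite $A$ is determined by formulas involving only finitely many coordinates of the enumeration of $A$, so finite satisfiability in $M$ reduces to witnessing satisfaction for formulas mentioning only finitely many parameters from $A$.

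Shrinking is essentially monotonicity of finite satisfiability: if $\tp(A/B)$ is finitely satisfied in $M$, $A' \subseteq A$, and $B' \subseteq B$, then any formula in $\tp(A'/B')$ is, up to padding variables and parameters, a formula in $\tp(A/B)$, so any $M$-witness for the latter restricts to one for the former. Applying this for each $j \in J$ to the inclusions $A_j' \subseteq A_j$ and $A'_{J,<j}C' \subseteq A_{<j}C$ yields the Shrinking clause.

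For Condensation, fix $j \in J$ and set $I_j := \pi^{-1}(j)$. Since $\pi$ has convex fibers and $I$ is linearly ordered, every index in $\pi^{-1}(j')$ with $j' < j$ strictly precedes every element of $I_j$, so $A^*_{<j} = A_{I_{<j}}$ where $I_{<j} := \bigcup_{j' < j} \pi^{-1}(j')$ is an initial segment of $I$ disjoint from $I_j$. To show $\tp(A_j^*/A^*_{<j}C)$ is finitely satisfied in $M$, fix a formula $\phi(\ybar,\bbar)$ in this type; by the convention above, only finitely many indices $i_1 < \cdots < i_k$ in $I_j$ contribute to $\ybar$. For each $\ell \le k$, restriction of $\tp(A_{i_\ell}/A_{<i_\ell}C)$ gives $\tp(A_{i_\ell}/A^*_{<j}C \cup A_{i_1}\cup\cdots\cup A_{i_{\ell-1}})$ finitely satisfied in $M$, and iterated application of Fact~\ref{basic}(4) then produces $\tp(A_{i_1}\cdots A_{i_k}/A^*_{<j}C)$ finitely satisfied in $M$, yielding the desired $\mbar \in M^{\lg(\ybar)}$ witnessing $\phi$.

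The ``in particular'' clause is immediate from Condensation with target set $\{0,1\}$. The only real bookkeeping concern, and hence the nominal main obstacle, is handling the case where $I_j$ is an infinite convex fiber (possibly with no least element); this is resolved uniformly by reducing each individual formula to the finitely many relevant indices $i_1 < \cdots < i_k$ and then applying transitivity along a finite chain, so the potential infinitude of $I_j$ never actually enters the argument.
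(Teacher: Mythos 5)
Your proof is correct and follows the same approach as the paper: Shrinking is immediate from monotonicity of finite satisfiability, Condensation follows by iterating transitivity from Fact~\ref{basic}(4), and the two-piece case is the special case of the condensation $\pi\colon I\rightarrow\{0,1\}$. Your proposal simply spells out the finitary reduction that the paper leaves implicit.
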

	
	\begin{proof}  The statement is immediate for Shrinking, and for Condensation follows by transitivity in Fact \ref{basic}. 
		The last sentence is a special case of Condensation, as the partition defines a condensation $\pi:I\rightarrow \set{0,1}$ with $\pi^{-1}(0)=J$.
	\end{proof}
	
	\begin{definition} \label{ext}  If $\<A_i:i\in I\>$ is an $M$-f.s.\ sequence over $C$, call $\<B_j:j\in J\>$ a {\em simple extension, resp.\ blow-up} if $\<A_i:i\in I\>$ is attained from it by Shrinking,
		resp.\ by Condensation.  $\<D_k:k\in K\>$ is an {\em extension} of $\<A_i:i\in I\>$ if it is a blow-up of a simple extension of $\<A_i:i\in I\>$ over $C$.
	\end{definition}
	
	Here is one general result, whose verification is just bookkeeping.
	
	\begin{lemma}  \label{blow} Suppose $\<A_i:i\in I\>$ is an $M$-f.s.\ sequence, $i^*\in I$, $J \cap I = \emptyset$,  and $\<A_j':j\in J\>/MA_{<i^*}$ is an $M$-f.s.\ sequence over $MA_{<i^*}$ with
		$\bigcup\set{A_j':j\in J}=A_{i^*}$.  Then the blow-up
		$$\<A_i:i<i^*\>\smallfrown \<A_j':j\in J\>\smallfrown \<A_i:i>i^*\>$$
		is also an $M$-f.s.\ sequence.
	\end{lemma}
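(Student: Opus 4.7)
The plan is to verify the blow-up directly from Definition~\ref{AJ} by checking the finite satisfiability condition index-by-index in the new sequence. Let me write the new sequence as $\langle E_k : k \in K\rangle$ where $K = I_{<i^*} \sqcup J \sqcup I_{>i^*}$ in the obvious concatenated order. I need to show that for each $k \in K$, the type $\tp(E_k / M E_{<k})$ is finitely satisfied in $M$. The three cases correspond to the three pieces of $K$, and in each case the required condition should reduce to a hypothesis we already have.

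The key bookkeeping observation, which I would make explicit at the start, is that because $\bigcup_{j \in J} A'_j = A_{i^*}$, the underlying set of elements appearing strictly before position $k$ in the new sequence equals the underlying set appearing strictly before the corresponding position in the old sequence (possibly together with a proper subset of $A_{i^*}$). Concretely: for $k = i \in I_{<i^*}$, $E_{<k} = A_{<i}$; for $k = j \in J$, $E_{<k} = A_{<i^*} \cup A'_{<j}$; and for $k = i \in I_{>i^*}$, $E_{<k} = A_{<i^*} \cup A_{i^*} \cup A_{(i^*,i)} = A_{<i}$. Since finite satisfiability of $\tp(A/B)$ in $M$ depends only on the set $B$ (not on any enumeration of it), the three conditions to verify become: $\tp(A_i / M A_{<i})$ f.s.\ in $M$ for $i < i^*$; $\tp(A'_j / M A_{<i^*} A'_{<j})$ f.s.\ in $M$ for $j \in J$; and $\tp(A_i / M A_{<i})$ f.s.\ in $M$ for $i > i^*$.

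The first and third follow immediately because $\langle A_i : i \in I\rangle$ is an $M$-f.s.\ sequence, while the middle one is exactly the hypothesis that $\langle A'_j : j \in J\rangle$ is an $M$-f.s.\ sequence over $MA_{<i^*}$. There is no genuine obstacle — the only thing to get right is the identification of $E_{<k}$ for $k > i^*$, which uses $\bigcup_j A'_j = A_{i^*}$ and the fact that finite satisfiability is a property of the parameter set rather than of how it was built up. A one-line closing remark would note that since $C = M$ here, no additional base parameters need to be tracked, but the same argument would go through verbatim for an $M$-f.s.\ sequence over any $C \supseteq M$.
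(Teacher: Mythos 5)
Your proof is correct and is precisely the bookkeeping verification the paper declines to write out; the three-case split and the identification of $E_{<k}$ with $A_{<i}$ for $k>i^*$ via $\bigcup_j A'_j = A_{i^*}$ is exactly what is intended.
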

	
	The next lemma is not used later, but shows that if $M \preceq N$, then decomposing $N$ as an $M$-f.s. sequence gives a chain of elementary substructures approximating $N$.
	
	\begin{lemma}  \label{chainofmodels}
		Suppose $M\preceq N$ and $\<A_i:i\in I\>$ is any $M$-f.s.\ sequence with $MA_I=N$.  Then, for every initial segment $I_0\subseteq I$ (regardless of whether or not $I_0$ has a maximum) $MA_{I_0}$ is an elementary substructure of $N$.
	\end{lemma}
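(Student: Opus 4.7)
My plan is to verify the Tarski--Vaught criterion for $MA_{I_0}$ as a subset of $N$. The cases $I_0 = I$ and $I_0 = \emptyset$ are trivial (the latter is just the hypothesis $M \preceq N$), so I assume both $I_0$ and $I \setminus I_0$ are nonempty. I fix a formula $\phi(\bar x, \bar b)$ with parameters $\bar b \in MA_{I_0}$ and $N \models \exists \bar x\, \phi(\bar x, \bar b)$ and look for a witness inside $MA_{I_0}$.

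The key step is to invoke the last sentence of Lemma~\ref{manipulations}. Since $I_0$ is an initial segment, the partition $I = I_0 \sqcup (I \setminus I_0)$ is into two convex pieces, so $\<A_{I_0}, A_{I \setminus I_0}\>$ is itself an $M$-f.s.\ sequence, and in particular $\tp(A_{I \setminus I_0}/MA_{I_0})$ is finitely satisfied in $M$.

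To produce the witness, I would pick any $\bar c \in N = MA_I$ with $N \models \phi(\bar c, \bar b)$, and split $\bar c = \bar c_0 \bar c_1$ so that the entries of $\bar c_0$ lie in $MA_{I_0}$ and those of $\bar c_1$ lie in $A_{I \setminus I_0}$ (permuting variables in $\phi$ as needed). Since $\bar c_0, \bar b \in MA_{I_0}$, the formula $\phi(\bar c_0, \bar y, \bar b)$ belongs to $\tp(\bar c_1/MA_{I_0})$, and finite satisfiability of this type in $M$ yields $\bar m \in M$ with $N \models \phi(\bar c_0, \bar m, \bar b)$. Then $\bar c_0 \bar m \in MA_{I_0}$ is the required witness, and Tarski--Vaught delivers $MA_{I_0} \preceq N$. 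There is no real obstacle in this argument---the only subtlety is recognizing that Condensation cleanly collapses the whole sequence into the two-step $M$-f.s.\ sequence $\<A_{I_0}, A_{I \setminus I_0}\>$, after which finite satisfiability immediately returns a witness in $M \subseteq MA_{I_0}$.
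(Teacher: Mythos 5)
Your proposal is correct and uses the same core idea as the paper: verify Tarski--Vaught for $MA_{I_0}\subseteq N$, observing that any witness outside $MA_{I_0}$ lies in $A_{I\setminus I_0}$, whose type over $MA_{I_0}$ is finitely satisfiable in $M$ (via Condensation, as you note), so a witness can be pulled back into $M$. The paper states the criterion with a single existential variable and does not explicitly cite Lemma~\ref{manipulations}, but this is only a cosmetic difference from your tuple version.
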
  
	
	\begin{proof}  We apply the Tarski-Vaught criterion.  Choose a formula $\phi(x,\abar,\mbar)$ with $\abar$ from $A_{I_0}$ and $\mbar$ from $M$ such that $N\models\exists x\phi(x,\abar,\mbar)$. If some $c\in N\setminus MA_{I_0}$ realizes $\phi(x,\abar,\mbar)$, then as $\tp(c/MA_{I_0})$ is finitely satisfied in $M$, there is also a solution in $M$. Otherwise, if there is a solution in $MA_{I_0}$, there is nothing to check. 
	\end{proof}

	The following argument is contained in the proof of \cite{Hanf}*{Part I Lemma 2.6}, but the statement here is slightly more general. (The paper \cite{Hanf} is divided into Part I and Part II, with overlapping numbering schemes.) 

	\begin{proposition}[Extending the base]  \label{baseextension} Suppose $C\supseteq M$ and $\<A_i:i\in I\>$ is an $M$-f.s.\ sequence over $C$.
		For every $D\supseteq C$, there is $D'$ with $\tp(D'/C)=\tp(D/C)$ and $\<A_i:i\in I\>$ is an $M$-f.s.\ sequence over $D'$.  
	\end{proposition}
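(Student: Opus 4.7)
The plan is to build, by recursion on $i\in I$, an alternate sequence $\<A_i'':i\in I\>$ with $\tp(A_I''/C)=\tp(A_I/C)$ (as ordinally enumerated tuples) that is additionally an $M$-f.s.\ sequence over $D$.  Once this is done, saturation and homogeneity of $\C$ furnish an automorphism $\sigma$ of $\C$ fixing $C$ pointwise with $\sigma(A_I'')=A_I$, and setting $D':=\sigma(D)$ gives $\tp(D'/C)=\tp(D/C)$ together with $\tp(A_i/A_{<i}D')=\sigma(\tp(A_i''/A_{<i}''D))$ finitely satisfied in $M$ (since $\sigma$ fixes $M$).

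For the recursive step, suppose $\<A_j'':j<i\>$ has been built so that $\tp(A_{<i}''/C)=\tp(A_{<i}/C)$.  The hypothesis on the original sequence gives $p_i:=\tp(A_i/A_{<i}C)$ finitely satisfied in $M$; transporting by the partial elementary map $A_{<i}\mapsto A_{<i}''$ over $C$ yields a type $p_i''$ over $A_{<i}''C$ that is also finitely satisfied in $M$.  By Fact \ref{basic}(1), $p_i''=Av(\U,A_{<i}''C)$ for some ultrafilter $\U$ on $M^{\lg(\ybar_i)}$, where $\ybar_i$ enumerates $A_i$.  Now let $A_i''$ realize the larger average type $Av(\U,A_{<i}''CD)$.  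Then $\tp(A_i''/A_{<i}''C)=p_i''$, which maintains the inductive hypothesis on types over $C$, and $\tp(A_i''/A_{<i}''CD)=Av(\U,A_{<i}''CD)$ is finitely satisfied in $M$, so in particular so is its restriction $\tp(A_i''/A_{<i}''D)$.

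There is no real obstacle: the construction uses only Fact \ref{basic}(1) at each stage, and infinite $A_i$ or $I$ of large order type cause no additional difficulty, since Fact \ref{basic} is stated for infinitary ultrafilters and each stage depends only on the partial map built in previous stages (no limit bookkeeping is needed).  The conceptual point, which one can read off the argument, is that choosing a witness $\U$ to finite satisfiability of $p_i''$ over $A_{<i}''C$ commits to nothing about the new parameters in $D\setminus C$, so its average type can be freely extended over all of $D$ without disturbing its restriction to $A_{<i}''C$.
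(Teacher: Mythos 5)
Your key idea coincides with the paper's: fix ultrafilters $\U_i$ witnessing $\tp(A_i/A_{<i}C) = Av(\U_i, A_{<i}C)$, realize the average types over the larger parameter set so that the restriction to $C$-parameters reproduces the original sequence while the extension to $D$-parameters remains finitely satisfied in $M$, and then transport $D$ back by an automorphism fixing $C$. The observation that the ultrafilter witnessing $p_i''$ over $A_{<i}''C$ can be freely re-averaged over $A_{<i}''CD$ is exactly the engine of the paper's proof, and your closing automorphism step is the paper's final sentence.

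The gap is in the phrase ``recursion on $i\in I$.'' Definition \ref{AJ} allows $(I,\le)$ to be an \emph{arbitrary} linear order, and the paper routinely uses non-well-ordered index sets such as $\Q$ (e.g., in pre-coding configurations and throughout Section 3). To run your recursive step at stage $i$ you need all of $A_{<i}''$ already in hand, because $p_i''$ is a type over the full set $A_{<i}''C$; but for $I=\Q$ or $I=\Z$ there is no first element and no well-founded order in which every $j<i$ has been processed before $i$. Your remark that ``$I$ of large order type cause[s] no additional difficulty'' and ``no limit bookkeeping is needed'' addresses cardinality but not the genuine obstruction, which is well-foundedness: the recursion does not get started.

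The paper sidesteps this by defining types $w_t(\xbar_t)$ over $D$ only for \emph{finite} $t\subseteq I$, by recursion on $|t|$ (always well-founded), verifying coherence under restriction, and then invoking compactness to assemble the complete type $w^*$ over $D$ for the whole sequence. The ultrafilter content is identical to yours; the difference is that the recursion runs on finite subsets of $I$ rather than on $I$ itself. You could repair your write-up either by adopting that device, or by first proving the proposition for well-ordered (or just finite) $I$ exactly as you do and then deducing the general case by compactness, noting that ``$\<A_i:i\in I\>$ is an $M$-f.s.\ sequence over $D'$'' is equivalent to the same statement for every finite subsequence.
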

	
	\begin{proof}  As notation, choose disjoint sets $\set{\xbar_i:i\in I}$ of variables, with $\lg(\xbar_i)=\lg(A_i)$ for each $i\in I$.  
		For each $i\in I$, choose an ultrafilter $\U_i$ on $M^{\lg(A_i)}$ such that $\tp(A_i/A_{<i}C)=Av(\U_i,A_{<i}C)$.
		
		For a finite, non-empty $t=\set{i_1<i_2<\dots<i_n}\subseteq I$, let $\xbar_t=\xbar_{i_1}\dots\xbar_{i_n}$.
		We will recursively define complete types $w_t(\xbar_t)\in S_{\xbar_t}(D)$ as follows:
		\begin{itemize}
			\item  For $t=\set{i^*}$ a singleton, let $w_t(\xbar_t):=Av(\U_{i^*},D)$.
			\item  For $|t|>1$, letting $i^*=\max(t)$ and $s=t\setminus \set{i^*}$,
			$$w_t(\xbar_t):=w_s(\xbar_s)\cup Av(\U_i,D\xbar_s)$$
		\end{itemize}
		That is, $\abar_t'$ realizes $w_t$ if and only if $\abar'_s$ realizes $w_s$ and, for every $\theta(\xbar_{i^*},\dbar,\abar_s')$,
		$\theta(\abar'_{i^*},\dbar,\abar'_s)$ holds if and only if $\set{\mbar\in M^{\lg(A_{i^*})}:\C\vDash\theta(\mbar,\dbar,\abar_s')}\in\U_{i^*}$.
		
		It is easily checked that each $w_t(\xbar_t)$ is a complete type over $D$ and, arguing by induction on $|t|$, whenever $t'\subseteq t$,
		$w_{t'}$ is the restriction of $w_t$ to $\xbar_{t'}$.
		Thus, by compactness, $w^*:=\bigcup\set{w_t(\xbar_t):t\subseteq I \text{ non-empty, finite}}$ is  consistent, and in fact, is a complete type over $D$.
		Choose any realization $\<A_i':i\in I\>$ of $w^*$.
		Then, for each $i\in I$, $\tp(A_i'/DA_{<i}')=Av(\U_i,DA_{<i}')$.  Since $D\supseteq C$ and $\tp(A_i/CA_{<i})=Av(\U_i,CA_{<i})$, it follows
		that $\tp(\<A_i':i\in I\>/C)=\tp(\<A_i:i\in I\>/C)$.  Thus, it suffices to choose any $D'$ satisfying
		$\<A_i':i\in I\>D\equiv_{C} \<A_i:i\in I\>D'$.
	\end{proof}
	
	\subsection{$C\supseteq M$ full for non-splitting}
	
	\begin{definition}
		We call $C\supseteq M$ {\em full} (for non-splitting over $M$) if, for every $n$, every $p\in S_n(M)$ is realized in $C$.
	\end{definition}
	
	The relevance of fullness is that, whenever $C\supseteq M$ is full,  every complete type $q\in S(C)$ has a unique extension to any set $D\supseteq C$ that does not split over $M$.
	Keeping in mind finite satisfiability as an analogue of non-forking, the next lemma says that `types over $C$ that are finitely satisfied in $M$ are stationary.'
	
	\begin{lemma} [\cite{Hanf}*{Part I Lemma 1.5}] \label{stationary}  Suppose $C\supseteq M$ is full and $p\in S(C)$ is finitely satisfied in $M$.  Then for any set $D\supseteq C$,
		there is a unique $q\in S(D)$ extending $p$ that remains finitely satisfied over $M$.
	\end{lemma}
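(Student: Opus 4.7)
The plan is to prove existence and uniqueness separately, with existence being essentially routine from the average-type characterization in Fact \ref{basic}(1) and uniqueness being the content of the lemma.

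For existence, I would apply Fact \ref{basic}(1) to $p$ to write $p = Av(\U, C)$ for some ultrafilter $\U$ on $M^{\lg(\xbar)}$, and then simply set $q := Av(\U, D)$. This is a complete type over $D$ finitely satisfied in $M$, and it restricts to $p$ on $C$ because both types are determined by the same ultrafilter and $C \subseteq D$.

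For uniqueness, suppose $q, q' \in S(D)$ both extend $p$ and are both finitely satisfied in $M$. By Fact \ref{basic}(3), both are non-splitting over $M$. Fix any formula $\phi(\xbar, \dbar)$ with $\dbar \in D$ a finite tuple. Using fullness of $C$ over $M$, choose $\cbar \in C$ with $\tp(\cbar/M) = \tp(\dbar/M)$. Non-splitting of $q$ over $M$ gives $\phi(\xbar, \dbar) \in q \iff \phi(\xbar, \cbar) \in q$, and since $\cbar \in C$ and $q$ extends $p$, this is equivalent to $\phi(\xbar, \cbar) \in p$. The same chain of equivalences applies to $q'$, so $\phi(\xbar, \dbar) \in q \iff \phi(\xbar, \dbar) \in q'$. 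As $\phi$ and $\dbar$ were arbitrary, $q = q'$.

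There is no real obstacle here; the argument is the standard stationarity-from-non-splitting computation, and the only subtlety is recognizing that fullness is exactly what is needed to transfer formulas with $D$-parameters to formulas with $C$-parameters via a non-splitting type realized in $M$. The existence half is automatic from the machinery already set up, and the uniqueness half is a two-line application of non-splitting together with the definition of fullness.
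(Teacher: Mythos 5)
Your proof is correct and takes essentially the same approach as the paper: the paper describes $q$ explicitly by transferring parameters in $D$ down to $C$ via the fullness hypothesis and invokes non-splitting for well-definedness, which is exactly your uniqueness computation (the paper leaves existence and uniqueness implicit in the phrase ``well-defined''). Your existence step via the ultrafilter average is a minor stylistic variant but amounts to the same thing.
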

	
	\begin{proof}  In fact, we can easily describe $q$.  A formula $\theta(\xbar,\dbar)\in q$ if and only if $\theta(\xbar,\cbar)\in p$ for some
		(equivalently, for every) $\cbar$ from $C$ with $\tp(\dbar/M)=\tp(\cbar/M)$.  The fact that $q$ is well-defined is because, being finitely satisfied in $M$,
		$p$ does not split over $M$.
	\end{proof}
	
	\begin{lemma}  [\cite{Hanf}*{Part I Observation 1.6}] \label{1.6} Suppose $C\supseteq M$ is full and $\<A,B\>/C$ is an $M$-f.s.\ sequence over $C$.  Partition $B$ as $B_1B_2$ (not necessarily convex).
		Then $\<A,B_1,B_2\>/C$ is an $M$-f.s.\ sequence over $C$ if and only if $\<B_1,B_2\>/C$ is an $M$-f.s.\ sequence over $C$.
	\end{lemma}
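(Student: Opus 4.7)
The forward implication is immediate by restricting parameters: if $\<A, B_1, B_2\>/C$ is an $M$-f.s.\ sequence, then $\tp(B_1/AC)$ and $\tp(B_2/AB_1C)$ are finitely satisfied in $M$, and these restrict to $\tp(B_1/C)$ and $\tp(B_2/B_1C)$ being finitely satisfied in $M$. For the converse, since $\tp(A/C)$ and $\tp(B_1/AC)$ are already finitely satisfied in $M$ by the standing hypothesis on $\<A, B\>/C$ (the latter by restricting variables to $B_1 \subseteq B$), the only nontrivial condition to verify is that $\tp(B_2/AB_1C)$ is finitely satisfied in $M$.

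The plan is to produce a conjugate of $(B_1, B_2)$ over $AC$ that witnesses this finite satisfiability, then identify it with $(B_1, B_2)$ itself via fullness. First, apply Proposition \ref{baseextension} to the sequence $\<B_1, B_2\>$ over $C$ with $D := AC$; this produces $A'C \supseteq C$ with $\tp(A'/C) = \tp(A/C)$ such that $\<B_1, B_2\>$ is an $M$-f.s.\ sequence over $A'C$. Using homogeneity of $\C$, choose $\sigma \in \mathrm{Aut}(\C/C)$ with $\sigma(A') = A$, and set $B_i^* := \sigma(B_i)$. Since $\sigma$ fixes $M \subseteq C$ pointwise, it preserves finite satisfiability in $M$, so $\<B_1^*, B_2^*\>$ is an $M$-f.s.\ sequence over $AC$, and $\tp(B_1^* B_2^*/C) = \tp(B_1 B_2/C)$.

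The key step is now to identify $\tp(B_1^* B_2^*/AC)$ with $\tp(B_1 B_2/AC) = \tp(B/AC)$: both extend $\tp(B/C)$ and are finitely satisfied in $M$ (the second by the standing hypothesis on $\<A, B\>/C$, the first by construction), so fullness of $C$ together with Lemma \ref{stationary} force them to coincide. By homogeneity again, there is $\tau \in \mathrm{Aut}(\C/AC)$ with $\tau(B_i^*) = B_i$; since $M \subseteq AC$, $\tau$ fixes $M$ pointwise, so conjugating the finite satisfiability of $\tp(B_2^*/B_1^* AC)$ in $M$ by $\tau$ yields the desired finite satisfiability of $\tp(B_2/B_1 AC)$ in $M$.

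The main obstacle is that the hypothesis on $\<B_1, B_2\>/C$ alone gives no direct handle on the interaction of $B_2$ with $A$. The remedy is to use Proposition \ref{baseextension} to first realize a copy $A'$ of $A$ in a position compatible with the sequence $\<B_1, B_2\>$, and then exploit fullness of $C$ via stationarity (Lemma \ref{stationary}) to transport this setup back so that the realized copy agrees, up to an automorphism over $AC$, with the original $A$; this is exactly where fullness is used in an essential way.
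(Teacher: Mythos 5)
Your proof is correct and follows essentially the same strategy as the paper's: apply Proposition~\ref{baseextension} to realize a conjugate configuration with the desired finite satisfiability, then invoke fullness via Lemma~\ref{stationary} to identify this conjugate with the original over $AC$, and finally transfer the finite satisfiability. The only cosmetic difference is that you move $A$ (equivalently $B_1B_2$ as a block) via explicit automorphisms, whereas the paper applies base extension to $\langle B_2\rangle$ over $B_1C$ and then combines with transitivity; both routes pass through the same two lemmas and the same stationarity step.
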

	
	\begin{proof} Left to right is obvious.  For the converse, we need to show that $\tp(B_2/B_1AC)$ is finitely satisfied in $M$.
		To begin, by Proposition~\ref{baseextension}, choose  $B_2'\equiv_{B_1C} B_2$ with $\tp(B_2'/B_1AC)$ finitely satisfied in $M$.
		Note that $$B_1B_2'\equiv_C B_1B_2$$
		Also, since  $\<A,B\>/C$ is an $M$-f.s.\ sequence over $C$, we have both $\<A,B_1B_2\>/C$ and (by Shrinking) $\<A,B_1\>/C$ are $M$-f.s. sequences over $C$.
		By transitivity, the last statement, coupled with $\tp(B_2'/B_1AC)$ finitely satisfied in $M$, implies $\tp(B_1B_2'/AC)$ is finitely satisfied in $M$.
		Thus, by Lemma~\ref{stationary}, $$B_1B_2'\equiv_{AC} B_1B_2$$  As $\tp(B_2'/B_1AC)$ finitely satisfied in $M$, so is $\tp(B_2/B_1AC)$.
	\end{proof}

	\begin{lemma}  \label{startcongruence}
		Suppose $C\supseteq M$ is full and $\<A,B\>/C$ is an $M$-f.s.\ sequence over $C$.  Choose any $\abar_1,\abar_2$ from $A$ and $\bbar_1,\bbar_2$ from $B$ with
		$\tp(\abar_1/C)=\tp(\abar_2/C)$ and $\tp(\bbar_1/C)=\tp(\bbar_2/C)$.
		Then $\tp(\abar_1\bbar_1/C)=\tp(\abar_2\bbar_2/C)$.
	\end{lemma}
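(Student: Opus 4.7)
The plan is to deduce $\tp(\bar a_1\bar b_1/C)=\tp(\bar a_2\bar b_2/C)$ by passing through the intermediate type $\tp(\bar a_2\bar b_1/C)$: first exchange $\bar a_1$ for $\bar a_2$ (over $\bar b_1 C$), then exchange $\bar b_1$ for $\bar b_2$ (over $\bar a_2 C$). The hypothesis that $\langle A,B\rangle/C$ is an $M$-f.s.\ sequence gives that both $\tp(A/C)$ and $\tp(B/AC)$ are finitely satisfied in $M$; by Shrinking (Lemma \ref{manipulations}), this restricts to the fact that $\tp(\bar b_i/AC)$ is finitely satisfied in $M$ for each $i\in\{1,2\}$, which is what the two swaps will consume.

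For the first swap, I would apply non-splitting over $M$ (Fact \ref{basic}(3)) to the type $p:=\tp(\bar b_1/AC)$. Enumerating $C$ as $\bar c$, the hypothesis $\tp(\bar a_1/C)=\tp(\bar a_2/C)$ gives $\tp(\bar a_1\bar c)=\tp(\bar a_2\bar c)$ as full types, hence a fortiori $\tp(\bar a_1\bar c/M)=\tp(\bar a_2\bar c/M)$. Non-splitting of $p$ then yields $\phi(\bar y,\bar a_1,\bar c)\in p \Leftrightarrow \phi(\bar y,\bar a_2,\bar c)\in p$ for every formula $\phi$, i.e.\ $\tp(\bar b_1/\bar a_1 C)=\tp(\bar b_1/\bar a_2 C)$, which is equivalent to $\tp(\bar a_1\bar b_1/C)=\tp(\bar a_2\bar b_1/C)$.

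For the second swap, I would invoke the stationarity from Lemma \ref{stationary}, which uses fullness of $C$. Since $\tp(\bar b_i/AC)$ is finitely satisfied in $M$ for $i=1,2$, the restrictions $\tp(\bar b_1/\bar a_2 C)$ and $\tp(\bar b_2/\bar a_2 C)$ are both finitely satisfied in $M$, and both extend the common type $\tp(\bar b_1/C)=\tp(\bar b_2/C)$ to the larger base $\bar a_2 C$. By Lemma \ref{stationary} such an $M$-finitely-satisfied extension is unique, so these two types coincide, giving $\tp(\bar a_2\bar b_1/C)=\tp(\bar a_2\bar b_2/C)$. Chaining this with the first swap completes the argument.

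I do not anticipate a serious obstacle here: the lemma is essentially the combination of non-splitting (for the $\bar a$-side swap, where no fullness is needed) with stationarity over a full base (for the $\bar b$-side swap, where fullness is used crucially). The only small point requiring care is the passage from $\tp(\bar a_1/C)=\tp(\bar a_2/C)$ to equal types of $\bar a_i\bar c$ over $M$; this is immediate once one remembers that $C$ is treated as a parameter set, so equal types over $C$ give equal full types of $\bar a_i \bar c$, and hence equal types over the smaller $M\subseteq C$.
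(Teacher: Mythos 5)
Your proof is correct and follows essentially the same route as the paper's: both hinge on Lemma~\ref{stationary} (stationarity over the full $C$) for the $\bbar$-side swap. The paper packages the $\abar$-side transport by introducing an auxiliary $\bbar^*$ realizing the image type under the elementary map $\abar_1\mapsto\abar_2$ over $C$, whereas you observe directly via non-splitting (Fact~\ref{basic}(3)) that $\bbar_1$ itself realizes that image, cleanly isolating that fullness is needed only for the second swap; this is a cosmetic rather than substantive difference.
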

	
	\begin{proof}  Let $p=\tp(\abar_1/C)$.  As $\tp(\abar_1/C)=\tp(\abar_2/C)$, the map $f:C\abar_1\rightarrow C\abar_2$ fixing $C$ pointwise with $f(\abar_1)=\abar_2$
		is elementary.  To prove the Lemma, it suffices to show that $\bbar_2$ realizes $f(p)$.  
		
		To see this, let $\bbar^*$ be any realization of $f(p)$ (anywhere in $\C$).  Then $\tp(\abar_1\bbar_1/C)=\tp(\abar_2\bbar^*/C)$.
		From this it follows that  
		$\tp(\bbar^*/C)=\tp(\bbar_1/C)=\tp(\bbar_2/C)$, with the second equality by hypothesis.  
		But also:
		\begin{enumerate}
			\item  $\tp(\bbar^*/C\abar_2)$ is finitely satisfied in $M$ since $\tp(\abar_1\bbar_1/C)=\tp(\abar_2\bbar^*/C)$ and $\tp(B/AC)$ is finitely satisfied in $M$; and
			\item  $\tp(\bbar_2/C\abar_2)$ is finitely satisfied in $M$ since $\tp(B/AC)$ is finitely satisfied in $M$.
		\end{enumerate}
		Applying Lemma~\ref{stationary} to the last three statements implies $\tp(\bbar_2/\abar_2C)=\tp(\bbar^*/\abar_2C)$, i.e., $\bbar_2$ realizes $f(p)$.
	\end{proof}
	
	We glean two results from Lemma~\ref{startcongruence}.  The first bounds the number of types realized in an $M$-f.s.\ sequence, independent of either $|I|$ or $|A_i|$.
	
	\begin{lemma}  \label{fewtypes}  For any model $M$, for any $M$-f.s.\ sequence $\<A_i:i\in I\>$, and for every $i^*\in I$, $k\in\omega$,
		the number of complete $k$-types over $A_{<i^*}M$ realized in $A_{\ge i^*}$ is at most $\beth_2(|M|)$.
	\end{lemma}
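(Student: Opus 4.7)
The plan is to reduce the problem to counting ultrafilters on $M^k$, which has cardinality at most $\beth_2(|M|)$, via Fact~\ref{basic}(1). The main technical step is ensuring every relevant $k$-type is finitely satisfied in $M$.

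First, I would apply Condensation (via Lemma~\ref{manipulations}) to the partition $I = \{i : i < i^*\} \sqcup \{i : i \geq i^*\}$. This yields that $\langle A_{<i^*}, A_{\geq i^*}\rangle$ is itself an $M$-f.s.\ sequence, which is to say $\tp(A_{\geq i^*}/A_{<i^*}M)$ is finitely satisfied in $M$.

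Second, for any $k$-tuple $\bar a$ from $A_{\geq i^*}$, the restriction $\tp(\bar a / A_{<i^*}M)$ is also finitely satisfied in $M$: any formula $\phi(\bar x, \bar c)$ in this type, with $\bar c$ from $A_{<i^*}M$, is witnessed by the same ultrafilter on $M^k$ that shows the larger type is finitely satisfied. Hence by Fact~\ref{basic}(1), there exists an ultrafilter $\mathcal U$ on $M^k$ with $\tp(\bar a / A_{<i^*}M) = Av(\mathcal U, A_{<i^*}M)$.

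Finally, the map sending each realized $k$-type to (a choice of) such an ultrafilter $\mathcal U$ on $M^k$ is injective on types, so the number of such $k$-types is at most the number of ultrafilters on $M^k$. Since $|M^k| = |M|$ (assuming $M$ infinite; the finite case is trivial), this number is $2^{2^{|M|}} = \beth_2(|M|)$. There is no real obstacle here — the lemma is essentially a repackaging of the ultrafilter characterization of finitely satisfiable types, once Condensation has been used to propagate finite satisfiability to the tail $A_{\geq i^*}$.
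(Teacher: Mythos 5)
Your proof is correct, and it takes a genuinely more direct route than the paper's. The paper's proof also begins by reducing to the two-element sequence $\<A,B\>:=\<A_{<i^*},A_{\ge i^*}\>$ via Condensation, but then invokes the full-set machinery: it fixes a full $C_0\supseteq M$ of size at most $2^{|M|}$, uses Proposition~\ref{baseextension} to replace $C_0$ by a conjugate $C$ so that $\<A,B\>$ is an $M$-f.s.\ sequence over $C$, and then applies the stationarity lemma (Lemma~\ref{stationary}) to conclude that for $\bbar,\bbar'\in B^k$ the equality $\tp(\bbar/C)=\tp(\bbar'/C)$ forces $\tp(\bbar/AC)=\tp(\bbar'/AC)$; the bound follows since there are at most $\beth_2(|M|)$ complete $k$-types over $C$. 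You instead observe that $\tp(\bbar/A_{<i^*}M)$ is itself finitely satisfied in $M$ (Shrinking), so by Fact~\ref{basic}(1) it is $Av(\U,A_{<i^*}M)$ for some ultrafilter $\U$ on $M^k$; since distinct types must come from distinct ultrafilters, the count is bounded by the number of ultrafilters on $M^k$, which is $\beth_2(|M|)$. Your argument is self-contained and avoids both Proposition~\ref{baseextension} and Lemma~\ref{stationary}; the paper's detour through a full set $C$ mainly rehearses a technique that recurs elsewhere (e.g., Lemma~\ref{fshalf}), where one really does need to control the type over $C$ rather than just count. Both arguments give the same bound.
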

	
	\begin{proof}  Because of Condensation, it suffices to prove that for any model $M$ and for any $M$-f.s.\ sequence $\<A,B\>$, at most $\beth_2(|M|)$ complete $k$-types are realized in $B$.
		To see this, choose a full $C_0\supseteq M$ with $|C_0|\le 2^{|M|}$.  By Proposition~\ref{baseextension},
		choose $C\supseteq M$ with $\tp(C/M)=\tp(C_0/M)$ and $\<A,B\>$ an $M$-f.s.\ sequence over $C$.
		Choose any $\bbar,\bbar'\in B^k$ with $\tp(\bbar/C)=\tp(\bbar'/C)$.  As both $\tp(\bbar/AC)$ and $\tp(\bbar'/AC)$ are finitely satisfied in $M$, it follows from Lemma~\ref{stationary}
		that $\tp(\bbar/AC)=\tp(\bbar'/AC)$.  As there are at most  $\beth_2(|M|)$ complete $k$-types over $C$, this suffices.
	\end{proof}
	
	The second is a refinement of the type structure of an $M$-f.s.\ sequence over a full $C\supseteq M$.
	
	\begin{definition}  An $M$-f.s.\ sequence $\<A_i:i\in I\>/C$ is an {\em order-congruence over $C$} if, for all $i^*\in I$, for all $i^*\le i_1<i_2\dots<i_n$, $i^*\le j_1<j_2<\dots j_n$ from $I$,
		and for all $\abar_k\in A_{i_k}, \bbar_k\in A_{j_k}$ satisfying
		$\tp(\abar_k/C)=\tp(\bbar_k/C)$ for $k=1,\dots,n$, we have
		$$\tp(\abar_1,\dots,\abar_n/CA_{<i^*})=\tp(\bbar_1,\dots,\bbar_n/CA_{<i^*})$$
	\end{definition}
	
	The following is essentially part of the statement of \cite{Hanf}*{Part I Lemma 2.6}.
	
	\begin{proposition}  \label{seqcongruence}  For every model $M$, every $M$-f.s.\ sequence $\<A_i:i\in I\>$ over any full $C\supseteq M$ is an order-congruence over $C$.
	\end{proposition}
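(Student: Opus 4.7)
The plan is to proceed by induction on $n$, after first enlarging the base. By Condensation (Lemma~\ref{manipulations}), the truncation $\<A_i:i\ge i^*\>$ is an $M$-f.s.\ sequence over $D:=CA_{<i^*}$, and $D$ is full because $D\supseteq C$. The goal thus becomes: for all $i^*\le i_1<\dots<i_n$ and $i^*\le j_1<\dots<j_n$ in $I$ and all $\abar_k\in A_{i_k}$, $\bbar_k\in A_{j_k}$ with $\tp(\abar_k/C)=\tp(\bbar_k/C)$, show $\tp(\abar_1\dots\abar_n/D)=\tp(\bbar_1\dots\bbar_n/D)$.

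For the base case $n=1$, both $\tp(\abar_1/D)$ and $\tp(\bbar_1/D)$ are finitely satisfied in $M$ (by Shrinking applied to the respective steps of the $M$-f.s.\ sequence at $i_1$ and $j_1$, using $D\subseteq CA_{<i_1}$ and $D\subseteq CA_{<j_1}$) and both extend the common type $\tp(\abar_1/C)=\tp(\bbar_1/C)$ over the full $C$; hence they agree by Lemma~\ref{stationary}.

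For the inductive step, the inductive hypothesis supplies an automorphism $\sigma$ of $\C$ fixing $D$ pointwise with $\sigma(\abar_k)=\bbar_k$ for $k\le n$. Set $\bbar^*:=\sigma(\abar_{n+1})$; then $\tp(\abar_1\dots\abar_{n+1}/D)=\tp(\bbar_1\dots\bbar_n\bbar^*/D)$, so it suffices to show $\tp(\bbar^*/D\bbar_1\dots\bbar_n)=\tp(\bbar_{n+1}/D\bbar_1\dots\bbar_n)$. Both of these types are finitely satisfied in $M$: the second because $D\bbar_1\dots\bbar_n\subseteq CA_{<j_{n+1}}$ and $\tp(A_{j_{n+1}}/CA_{<j_{n+1}})$ is finitely satisfied in $M$; the first as the $\sigma$-image of $\tp(\abar_{n+1}/D\abar_1\dots\abar_n)$, which is finitely satisfied in $M$ for the analogous reason (and $\sigma$ fixes $M\subseteq D$ pointwise, so f.s.\ in $M$ is preserved). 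They also restrict over $D$ to the common type $\tp(\bbar_{n+1}/D)=\tp(\abar_{n+1}/D)$ (equality by the base case applied to $\abar_{n+1},\bbar_{n+1}$, combined with $\tp(\bbar^*/D)=\tp(\abar_{n+1}/D)$ since $\sigma$ fixes $D$), so Lemma~\ref{stationary} applied over the full $D$ forces the two extensions to coincide.

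The main obstacle I anticipate is the possible interleaving of $(i_k)$ and $(j_k)$: a direct application of Lemma~\ref{startcongruence} requires splitting the $M$-f.s.\ sequence into two convex pieces placing the matching subtuples on opposite sides, which is not always achievable (e.g.\ when all $j_k$ precede all $i_k$, or they alternate). The induction bypasses this difficulty by peeling off the last coordinate via an automorphism transport rather than trying to split the sequence by position, reducing each step to the clean statement of stationarity over the full base.
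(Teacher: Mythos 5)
Your proof is correct, but it takes a genuinely different route from the paper's. The paper first establishes $\tp(\abar_1,\dots,\abar_n/C)=\tp(\bbar_1,\dots,\bbar_n/C)$ by iterating Lemma~\ref{startcongruence}, and then lifts this to $CA_{<i^*}$ by a single final application of Lemma~\ref{stationary}. You instead note up front that $D:=CA_{<i^*}$ is itself full and that the truncated sequence $\<A_i:i\ge i^*\>$ is $M$-f.s.\ over $D$, and then induct directly over $D$: at each step you transport $\abar_{n+1}$ to $\bbar^*$ via an elementary map supplied by the inductive hypothesis, and use stationarity over the full $D$ (Lemma~\ref{stationary}) to identify $\bbar^*$ with $\bbar_{n+1}$ over $D\bbar_1\cdots\bbar_n$. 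This sidesteps Lemma~\ref{startcongruence} entirely; in effect you have inlined and iterated the mechanism of its proof (elementary-map transport plus stationarity), working over the larger base $D$ in one pass rather than over $C$ first. Your worry about interleaving of $(i_k)$ and $(j_k)$ is a fair reading of the paper's terseness: a single invocation of Lemma~\ref{startcongruence} needs a convex split with the paired subtuples on opposite sides, and ``iterate $n-1$ times'' does not say how the iteration is organized when the two index sequences interleave. Your step avoids any need to split, since the finite satisfiability of $\tp(\bbar_{n+1}/D\bbar_1\cdots\bbar_n)$ only uses $j_1<\dots<j_{n+1}$ (and dually for the $\abar$'s), never a comparison between the $i$'s and $j$'s. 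Both arguments reach the same conclusion; yours is a bit more self-contained and robust to the interleaving issue, at the cost of re-deriving rather than citing the transport step.
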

	
	\begin{proof}  Fix $i^*$, $i^*\le i_1<\dots,i_n$, $i^*\le j_1<\dots<j_n$, $\abar_1,\dots,\abar_n$, and $\bbar_1,\dots,\bbar_n$ as in the hypotheses.
		By shrinking, for each $1\le k<n$, both  $\tp(\abar_{k+1}/C\abar_1,\dots,\abar_{k})$ and $\tp(\bbar_{k+1}/C\bbar_1,\dots,\bbar_{k})$ are finitely satisfied in $M$.
		As $C\supseteq M$ is full, by iterating Lemma~\ref{startcongruence} $(n-1)$ times, we have $\tp(\abar_1,\dots,\abar_n/C)=\tp(\bbar_1,\dots,\bbar_n/C)$.
		Also, both $\tp(\abar_1,\dots,\abar_n/CA_{<i^*})$ and $\tp(\bbar_1,\dots,\bbar_n/CA_{<i^*})$ are finitely satisfied in $M$, so
		$\tp(\abar_1,\dots,\abar_n/CA_{<i^*})=\tp(\bbar_1,\dots,\bbar_n/CA_{<i^*})$ by Lemma~\ref{stationary}.
	\end{proof}

	\subsection{$M$-f.s.\ sequences and indiscernibles}
	
	In this subsection, we explore the relation between $M$-f.s. sequences and indiscernibles. An $M$-f.s. sequence need not be indiscernible (for example, the tuples can realize different types), but when it is, it gives a special case of a Morley sequence in the sense of \cite{Pierre}.
	
	We first show indiscernible sequences can always be viewed as $M$-f.s. sequences over some model $M$.
	
		\textcolor{red}{In Lemma \ref{weakexistence}, the ``Furthermore'' sentence is false.}
	
	\begin{lemma} [extending \cite{Hanf}*{Part I Lemma 4.1}]\label{weakexistence} Suppose $(I,\le)$ is infinite and $\I=\<\abar_i:i\in I\>$ is indiscernible over $\emptyset$.  (For simplicity, assume $\lg(\abar_i)$ is finite).
		Then there is a model $M$ such that $\<\abar_i:i\in I\>$ is both indiscernible over $M$ and is an $M$-f.s.\ sequence.
		
		\st{Furthermore, if there is some $B$ such that $\I$ is indiscernible over each $b \in B$, then $M$ may be chosen so that $\I$ additionally remains indiscernible over $Mb$}.
	\end{lemma}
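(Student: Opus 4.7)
My plan is to construct $M$ by stretching $\I$ on the right to a long dense indiscernible tail and taking $M$ to be a Skolem hull of that tail. The key observation is the following: if $\I + \mathcal{J}$ is $L$-indiscernible with $\mathcal{J} = \<\bar{a}_j : j \in J\>$ indexed by a dense linear order $J = \Q$ placed after $I$, and $M$ is generated from $\bar{a}_J$ via Skolem functions, then given any formula $\phi(\bar{x}, \bar{m}, \bar{a}_{i_1}, \ldots, \bar{a}_{i_k})$ true of $\bar{a}_i$ with $\bar{m} = \tau(\bar{a}_{j_1}, \ldots, \bar{a}_{j_l}) \in M$ and $i_1 < \cdots < i_k < i$ in $I$, the density of $J$ supplies some $i^* \in J$ with $i^* < \min(j_1, \ldots, j_l)$. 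Since $I < J$ ensures $i^* > i_k$ automatically, the shift $i \mapsto i^*$ preserves the order type of the index tuple $(i_1, \ldots, i_k, i, j_1, \ldots, j_l)$, so indiscernibility transports the formula to one holding at $\bar{a}_{i^*} \in M$, witnessing finite satisfiability.

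Concretely, I would pass to a Skolem expansion $T^{sk}$ of $T$ with monster $\C^{sk} \succeq \C$, and via a Ramsey/Ehrenfeucht--Mostowski extraction find $\I^\# = \<\bar{a}_i^\# : i \in I\>$ in $\C^{sk}$ that is $L^{sk}$-indiscernible with the same $L$-EM-type as $\I$, hence the same complete $L$-type as an $I$-indexed tuple. Then extend $\I^\#$ by a long $L^{sk}$-indiscernible tail $\mathcal{J}$ indexed by $\Q$, and let $M^\#$ be the $L^{sk}$-Skolem hull of $\bar{a}_J$ in $\C^{sk}$, whose $L$-reduct is an elementary substructure of $\C$. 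The $L$-indiscernibility of $\I^\#$ over $M^\#$ follows since every element of $M^\#$ is an $L^{sk}$-term in $\bar{a}_J$, reducing the claim to $L^{sk}$-indiscernibility of $\I^\# + \mathcal{J}$ over $\bar{a}_J$; the $M^\#$-f.s.\ property follows from the shifting argument above. Finally, since $\I^\#$ and $\I$ have the same complete $L$-type, homogeneity of $\C$ supplies an $L$-automorphism $\sigma$ with $\sigma(\I^\#) = \I$, and $M := \sigma(M^\# \myrestriction L)$ is the required model.

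For the furthermore clause, carry out the extraction so that $\I^\# + \mathcal{J}$ is additionally $L^{sk}$-indiscernible over each $b \in B$ (possible because $\I$ is $L$-indiscernible over each such $b$), and choose $\sigma$ to fix each $b \in B$ pointwise. The main obstacle I anticipate is arranging the extraction and the automorphism compatibly with the entire parameter set $B$: $\I$ is only assumed indiscernible over each $b$ individually rather than over $B$ jointly, so matching the complete $L$-type of $\I^\#$ to that of $\I$ over $B$ as an infinite indexed tuple requires care, presumably by working one $b$ at a time together with a compactness argument to patch the extractions.
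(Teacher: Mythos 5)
Your construction is essentially the paper's: Skolemize while preserving indiscernibility, end-extend $\I$ by a tail, and take $M$ to be the $L$-reduct of the Skolem hull of the tail, with an automorphism transporting everything back to the original $\I$. The paper uses a tail of order type $\omega^*$ whereas you use $\Q$; both work, but for a different reason than you give. What the shift argument actually needs is that the tail index set have \emph{no minimum} (so that given any finite $j_1<\dots<j_l$ from the tail there is $i^*$ in the tail strictly below them all), not density. A dense order can have a minimum (e.g.\ $[0,1)\cap\Q$), in which case your argument would fail for the formula whose tail-parameter is that minimum; $\Q$ works because it has no least element, and $\omega^*$ works for the same reason despite being nowhere dense.

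On the ``furthermore'' clause: you are right to flag the difficulty. The paper's own proof is terse here, asserting only that the end-extension ``can still be done so the result is indiscernible over each $b\in B$,'' and the issue you identify is real: the extraction must simultaneously produce $L^{sk}$-indiscernibility over each $b$ and an $L$-automorphism $\sigma$ with $\sigma(\I^\#)=\I$ fixing $B$ pointwise, which requires $\tp_L(\I^\#/B)=\tp_L(\I/B)$ as infinite tuples. Since $\I$ is assumed indiscernible over each $b$ individually but not over $B$ jointly, this $L$-type can depend nontrivially on the indices of $\I$, and a naive Ramsey extraction (which homogenizes) would destroy it. Your sketch of ``one $b$ at a time plus compactness'' gestures in the right direction but is not worked out, and it is genuinely not obvious that it closes the gap: the obstruction is that the patched extractions must all yield the same Skolem structure and tail, while different $b$'s may pull the extraction in incompatible directions. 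So while your write-up matches the paper's route for the main statement, the ``furthermore'' is where both you and the paper's proof leave real work on the table, and you would need to fill that in before calling the argument complete.
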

	\begin{proof}
		Expand the language to have built-in Skolem functions while keeping $\I$ indiscernible, and end-extend $\I$ to an indiscernible sequence of order-type $I + \omega^*$. (For the `Furthermore' sentence, note this can still be done so the result is indiscernible over each $b \in B$.) Let $I^*$ be the new elements added, and let $M$ be reduct of the Skolem hull of $I^*$ to the original language. (If $I + I^*$ were indiscernible over $b$, then $I$ is indiscernible over $Mb$.)
	\end{proof}
	
	Armed with this Lemma, we characterize when an infinite $\I=\<\abar_i:i\in I\>$ is both an $M$-f.s\ sequence and is indiscernible over $M$.  
	(A paradigm of an indiscernible sequence over $M$ that is not an $M$-f.s.\ sequence is where $M$ is an equivalence relation with infinitely many, infinite classes and 
	$\<a_i:i\in\omega\>$ is a sequence from some $E$-class not represented in $M$.)

	\begin{lemma} \label{charMorley}  An infinite sequence $\I=\<\abar_i:i\in I\>$ of $n$-tuples is both indiscernible over $M$ and an $M$-f.s.\ sequence if and only if
		there is an ultrafilter $\U$ on $M^n$ such that $\tp(\abar_i/MA_{<i})=Av(\U,MA_{<i})$ for every $i\in I$.
	\end{lemma}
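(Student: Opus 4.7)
The plan is to handle each direction separately using Fact~\ref{basic}, with the forward direction being the more substantial one.

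For the backward direction, assume a single ultrafilter $\U$ on $M^n$ satisfies $\tp(\abar_i/MA_{<i})=Av(\U,MA_{<i})$ for every $i\in I$. That $\I$ is an $M$-f.s.\ sequence is immediate since average types are always finitely satisfied in $M$. For indiscernibility over $M$, I would prove by induction on $k$ that $\tp(\abar_{i_1}\dots\abar_{i_k}/M)$ depends only on $k$, not on the choice $i_1<\dots<i_k$. Given the inductive hypothesis, fix an automorphism $\sigma$ of $\C$ fixing $M$ with $\sigma(\abar_{i_l})=\abar_{j_l}$ for $l<k$. Both $\tp(\abar_{i_k}/M\abar_{i_1}\dots\abar_{i_{k-1}})$ and $\tp(\abar_{j_k}/M\abar_{j_1}\dots\abar_{j_{k-1}})$ are restrictions of $Av(\U,\cdot)$ to the respective parameter sets. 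These restrictions correspond under $\sigma$ because non-splitting of $Av(\U,\cdot)$ over $M$ (Fact~\ref{basic}(3)) ensures the membership in $\U$ of a set $\{\mbar\in M^n:\phi(\mbar,\dbar)\}$ depends only on $\tp(\dbar/M)$, which is preserved by $\sigma$.

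For the forward direction, the idea is to consolidate the individual ultrafilters witnessing $M$-f.s.\ into one via a ``limit type.'' Let $MA^*:=\bigcup_{i\in I}MA_{<i}$ and define the partial type
\[
\Sigma(\xbar):=\bigcup_{i\in I}\tp(\abar_i/MA_{<i}).
\]
The key verification, using indiscernibility of $\I$ over $M$, is that $\Sigma$ is a well-defined complete type over $MA^*$: for $\bbar\in MA_{<i}\cap MA_{<i'}$ with $i<i'$, the indices of $\bbar\cap A_I$ all lie strictly below both $i$ and $i'$, so indiscernibility over $M$ yields $\phi(\abar_i,\bbar)\iff\phi(\abar_{i'},\bbar)$ for any formula $\phi$, showing the individual types agree on $\bbar$. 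Finite satisfiability of $\Sigma$ in $M$ is inherited from each $\tp(\abar_i/MA_{<i})$, so by Fact~\ref{basic}(1), $\Sigma=Av(\U,MA^*)$ for some ultrafilter $\U$ on $M^n$; restricting to $MA_{<i}$ then gives $\tp(\abar_i/MA_{<i})=Av(\U,MA_{<i})$ for every $i$. I expect the main subtlety to be the consistency of $\Sigma$ across different indices, which rests entirely on indiscernibility applied to tuples whose $I$-indices lie below the base parameters; once that is in place, Fact~\ref{basic}(1) delivers $\U$ and the rest is bookkeeping.
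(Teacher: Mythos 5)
Your proof is correct, and for the substantive (forward) direction it takes a genuinely different and arguably cleaner route than the paper. The paper's proof fixes an $\omega$-ordered subchain $J\subseteq I$, forms the family of sets $S^k_{\phi}=\{\mbar\in M^n:\C\models\phi(\mbar,\bbar)\}$ as $k$ ranges over $J$ and $\phi$ over $\tp(\abar_k/MA_{<k})$, argues via indiscernibility plus finite satisfiability that this family has the finite intersection property, chooses an ultrafilter $\U$ containing it, and then transfers from $J$ to all of $I$ by an induction on the length of the parameter tuple using indiscernibility. You instead form the limit type $\Sigma=\bigcup_{i\in I}\tp(\abar_i/MA_{<i})$ directly, verify it is a well-defined complete type over the directed union $MA^*=\bigcup_i MA_{<i}$ (well-definedness resting exactly on indiscernibility over $M$ applied to tuples whose $\I$-indices lie below the relevant $i$), observe $\Sigma$ is finitely satisfied in $M$, invoke Fact~\ref{basic}(1) to write $\Sigma=Av(\U,MA^*)$, and restrict. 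This is a one-step construction that avoids passing to a subchain and then transferring back, and it handles every order type of $I$ uniformly; the paper's argument buys nothing extra here. For the backward direction, the paper simply declares it ``clear''; your explicit argument via an automorphism fixing $M$ together with non-splitting of $Av(\U,\cdot)$ over $M$ (Fact~\ref{basic}(3)) is correct, though you could have shortened it by citing Lemma~\ref{old}: condition (1) there holds because $MA_{<i}\subseteq MA_{<j}$ gives that $\abar_j$ realizes $Av(\U,MA_{<i})=p_i$, and condition (2) is Fact~\ref{basic}(3).
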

	
	\begin{proof}  Right to left is clear, so assume $\I$ is both indiscernible over $M$ and an $M$-f.s.\ sequence.  As $(I,\le)$ is infinite, it contains either an ascending or descending $\omega$-chain.  For definiteness, choose $J\subseteq I$ of order type $\omega$.  To ease notation, we write $\abar_k$ in place of $\abar_{j_k}$.  For each $k\in \omega$ and each formula $\phi(\xbar,\bbar)\in\tp(\abar_k/MA_{<k})$, let $S^k_{\phi(\xbar,\bbar)}=\set{\mbar\in M^n:\C\vDash\phi(\abar_k,\bbar)}$.  As $\<\abar_k:k\in\omega\>$ is indiscernible over $M$, 
		$S^k_{\phi(\xbar,\bbar)}=S^{\ell}_{\phi(\xbar,\bbar)}$ for all $\ell\ge k$, and because it is an
		$M$-f.s.\ sequence,
		$\bigcup\set{S^k_{\phi(\xbar,\bbar)}:k\in\omega, \phi(\xbar,\bbar)\in\tp(\abar_k/MA_{<k})}$ has the finite intersection property.  Choose any ultrafilter $\U$ on $M^n$ containing every $S^k_{\phi(x,\bbar)}$.
		Thus, for any $k\le \ell<\omega$ and $\phi(\xbar,\bbar)$ with $\bbar\subseteq A_{<k}$, 
		$$\phi(\xbar,\bbar)\in\tp(\abar_\ell/MA_{<\ell})\quad\Leftrightarrow \quad S_{\phi(\xbar,\bbar)}\in\U \quad \Leftrightarrow \quad \phi(\xbar,\bbar)\in Av(\abar_\ell/MA_{<\ell})$$
		Finally, as $J\subseteq I$ and $\I$ is indiscernible over $M$, an easy induction on $\lg(\bbar)$ gives the result.
	\end{proof}
	
	Using Lemma~\ref{charMorley}, we obtain a strengthening of Lemma~\ref{weakexistence}.  The lemma below can be proved by modifying the proof of Lemma~\ref{baseextension}, but the argument here is fundamental enough to bear repeating.
	
	\begin{lemma}  \label{strongexistence}  If an infinite $\I=\<\abar_i:i\in I\>$ is both indiscernible over $M$ and an $M$-f.s.\ sequence, then for any $C\supseteq M$, there is $C'\supseteq M$ such that $\tp(C'/M)=\tp(C/M)$ and $\I$ is both indiscernible over $C'$ and an $M$-f.s.\ sequence over $C'$.  Thus, if $\I$ is an infinite, indiscernible sequence over $\emptyset$,
		then there is a model $M$ and a full $C\supseteq M$ such that $\I$ is both indiscernible over $C$ and an $M$-f.s.\  sequence  over $C$.
	\end{lemma}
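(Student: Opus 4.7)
The plan is to construct a new sequence $\langle\bbar_i:i\in I\rangle$ that plays the role $\I$ should play over $C$, and then transfer $C$ via an automorphism fixing $M$. By Lemma~\ref{charMorley}, fix an ultrafilter $\U$ on $M^n$ (where $n=\lg(\abar_i)$) such that $\tp(\abar_i/MA_{<i})=\mathrm{Av}(\U,MA_{<i})$ for every $i\in I$. We will build a sequence realizing the analogous relation over $C$ instead of $M$, using the \emph{same} $\U$ throughout so that indiscernibility is automatic.

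Mimicking the construction in the proof of Proposition~\ref{baseextension}, for each non-empty finite $t=\{i_1<\dots<i_n\}\subseteq I$ introduce fresh variables $\xbar_t=\xbar_{i_1}\cdots\xbar_{i_n}$ and recursively define complete types $w_t(\xbar_t)\in S(C)$ by $w_{\{i^*\}}=\mathrm{Av}(\U,C)$ and $w_t=w_s\cup\mathrm{Av}(\U,C\xbar_s)$, where $i^*=\max(t)$ and $s=t\setminus\{i^*\}$. The key difference from \ref{baseextension} is that the \emph{same} ultrafilter $\U$ is used at every step. As in that proof, the types $w_t$ cohere under restriction, so $p^*:=\bigcup_t w_t$ is a consistent complete type over $C$. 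Choose any realization $\langle\bbar_i:i\in I\rangle$ of $p^*$.

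I would then verify three properties. First, by construction $\tp(\bbar_i/C\bbar_{<i})=\mathrm{Av}(\U,C\bbar_{<i})$, so $\langle\bbar_i:i\in I\rangle$ is an $M$-f.s.\ sequence over $C$. Second, since the same $\U$ is used at each stage, the backward direction of Lemma~\ref{charMorley} (whose proof adapts verbatim with base $C$ in place of $M$) gives that $\langle\bbar_i:i\in I\rangle$ is indiscernible over $C$. Third, restricting to $M$: we have $\tp(\bbar_i/M\bbar_{<i})=\mathrm{Av}(\U,M\bbar_{<i})$, so by the other direction of Lemma~\ref{charMorley} applied to both sequences (each using the common $\U$), an induction on the length of finite subtuples yields $\tp(\langle\bbar_i:i\in I\rangle/M)=\tp(\langle\abar_i:i\in I\rangle/M)$. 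By sufficient homogeneity of $\C$, there is an automorphism $\sigma$ fixing $M$ pointwise with $\sigma(\bbar_I)=\abar_I$; set $C':=\sigma(C)$. Then $\tp(C'/M)=\tp(C/M)$, and since indiscernibility over $C$ and the $M$-f.s.\ property over $C$ are both preserved by $\sigma$, the sequence $\I$ is indiscernible over $C'$ and an $M$-f.s.\ sequence over $C'$.

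For the final clause, start with $\I$ indiscernible over $\emptyset$, apply Lemma~\ref{weakexistence} to obtain a model $M$ over which $\I$ is both indiscernible and an $M$-f.s.\ sequence, and let $C_0$ be any set containing a realization of every $p\in S_n(M)$ for each $n$ (so $C_0$ is full). Applying the first part with $C=C_0$ produces the required $C'$, which is full because $\tp(C'/M)=\tp(C_0/M)$. The main point to be careful about is the simultaneous verification of indiscernibility and finite satisfiability in the third paragraph; using a single ultrafilter $\U$ throughout the recursive construction is what makes both fall out together.
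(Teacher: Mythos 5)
Your proof is correct and takes essentially the same approach as the paper: choose the ultrafilter $\U$ from Lemma~\ref{charMorley}, build (by compactness) a sequence whose types over $C$ are generated by $\U$, observe that it has the same type over $M$ as $\I$, and then transfer $C$ back across an automorphism over $M$; the second clause is handled by Lemma~\ref{weakexistence} in both. The paper just abbreviates your explicit recursive construction of the $w_t$'s as ``a routine compactness argument.''
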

	
	\begin{proof}  For the first sentence, given $\I$, $M$ and $C$, choose an ultrafilter $\U$ as in Lemma~\ref{charMorley}.  A routine compactness argument shows that we can
		find a sequence $\<\abar_i^*:i\in I\>$ such that $\tp(\abar^*_i/CA_{<i})=Av(\U,CA^*_{<i})$ for every $i\in I$.  As we also have $\tp(\abar_i:MA_{<i})=Av(\U,MA_{<i})$,
		an easy induction shows that $\tp(I/M)=\tp(I^*/M)$.  Now any $C'$ satisfying $\tp(IC'/M)=\tp(I^*C/M)$ suffices.
		
		For the second sentence, given $\I$, apply Lemma~\ref{weakexistence} to get an $M$ for which $\I$ is both indiscernible over $M$ and an $M$-f.s.\ sequence, and choose 
		any full $C\supseteq M$.  Then apply the first sentence to $\I$, $M$, and $C$.
	\end{proof}

	Next, we  recall the following characterization of indiscernibility.  
	The relevant concepts first appeared in the proof of Theorem~4.6 of \cite{Morley} and a full proof appears in \cite{Shc}*{Lemma~I,~2.5}.  
	
	\begin{lemma}  \label{old}  Suppose $\<A_i:i\in I\>$ is any sequence of sets indexed by a linear order $(I,\le)$ and let $B$ be any set. 
		For each $i\in I$,  fix a (possibly infinite)  enumeration $\abar_i$ of $A_i$ and let $p_i(\xbar)=\tp(\abar_i/BA_{<i})$.  Then $\<\abar_i:i\in I\>$ is indiscernible over $B$
		if and only if
		\begin{enumerate}
			\item  For all $i\le j$, $\abar_j$ realizes $p_i$; and 
			\item  Each $p_i$ does not split over $B$.
		\end{enumerate}
	\end{lemma}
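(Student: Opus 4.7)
My plan is to verify each direction by direct manipulation. For the forward direction, condition (1) is immediate: given $i \leq j$ and $\phi(\bar{x}, \bar{d}) \in p_i$, I would express $\bar{d}$ as a rearrangement of some $\bar{b} \subseteq B$ together with $\bar{a}_{k_1}, \dots, \bar{a}_{k_m}$ for $k_1 < \dots < k_m < i$, and then indiscernibility applied to the strictly increasing $(m+1)$-tuples $(k_1, \dots, k_m, i)$ and $(k_1, \dots, k_m, j)$ yields $\models \phi(\bar{a}_j, \bar{d})$. For condition (2), given $\bar{c}, \bar{c}' \subseteq BA_{<i}$ with $\tp(\bar{c}/B) = \tp(\bar{c}'/B)$, I would express both as coordinate-selections from a common tuple $(\bar{b}, \bar{a}_{j_1}, \dots, \bar{a}_{j_N})$, where $j_1 < \dots < j_N < i$ enumerate the union of indices used by $\bar{c}$ and $\bar{c}'$. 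Extending to the strictly increasing tuple $(\bar{b}, \bar{a}_{j_1}, \dots, \bar{a}_{j_N}, \bar{a}_i)$ and invoking indiscernibility, the assumed $B$-equivalence of $\bar{c}$ and $\bar{c}'$ propagates to $B\bar{a}_i$-equivalence, giving non-splitting.

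For the backward direction, I would induct on $n$ to show that $\tp(\bar{a}_{i_1}, \dots, \bar{a}_{i_n}/B)$ depends only on $n$ for strictly increasing $(i_1, \dots, i_n) \in I^n$. The base $n = 1$ follows from (1) together with completeness of types (for $i \leq j$, $\bar{a}_j \models p_i$ forces $\tp(\bar{a}_j/B) = \tp(\bar{a}_i/B)$). For the inductive step, given $\bar{i} = (i_1, \dots, i_n)$ and $\bar{j} = (j_1, \dots, j_n)$, first use (1) to reduce to the case $i_n = j_n = k$: WLOG $i_n \leq j_n$, then $\bar{a}_{j_n}$ realizes $p_{i_n}$ and the parameters $\bar{a}_{i_1}, \dots, \bar{a}_{i_{n-1}}$ lie in $A_{<i_n}$, so swapping $\bar{a}_{i_n}$ for $\bar{a}_{j_n}$ preserves the type of the whole tuple over $B$. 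With $i_n = j_n = k$, the inductive hypothesis gives $\tp(\bar{a}_{i_1}, \dots, \bar{a}_{i_{n-1}}/B) = \tp(\bar{a}_{j_1}, \dots, \bar{a}_{j_{n-1}}/B)$, and both tuples lie in $A_{<k}$; non-splitting of $p_k$ from (2) then makes them interchangeable as parameters for $p_k$, completing the step.

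I expect the main obstacle to be the forward direction of (2): a tuple in $BA_{<i}$ can mix $B$-coordinates with coordinates from several $\bar{a}_k$'s in arbitrary ways, and aligning two such tuples of the same $B$-type into a common coordinate-selection framework requires careful bookkeeping. Once the alignment is made precise, indiscernibility delivers the non-splitting conclusion uniformly; this is the part of the argument where it is useful to refer to the detailed treatment in \cite{Shc}*{Lemma~I,~2.5}.
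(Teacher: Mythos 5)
The backward direction of your proof (conditions (1) and (2) imply indiscernibility) is correct, and the reduction to a common last index followed by an appeal to non-splitting of $p_k$ is exactly the standard argument. The forward direction of (1) is also fine. This backward direction is all the paper actually uses (via Lemma~\ref{charind} and Proposition~\ref{preservingindisc}, where (2) comes for free from finite satisfiability in $M$).

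However, your forward argument for condition (2) has a genuine gap: the step ``the assumed $B$-equivalence of $\bar{c}$ and $\bar{c}'$ propagates to $B\bar{a}_i$-equivalence'' is asserted but not justified, and indiscernibility does not provide it. Indiscernibility controls the types of tuples selected from the $\bar{a}_j$'s in increasing index order, but two tuples $\bar{c},\bar{c}'\subseteq B A_{<i}$ with $\tp(\bar{c}/B)=\tp(\bar{c}'/B)$ need not be ``alignable'' by any order-preserving operation on the index set: the type equality may be witnessed only by an automorphism of $\C$ that has nothing to do with the indiscernible sequence, and the passage to a common frame $(\bar{b},\bar{a}_{j_1},\dots,\bar{a}_{j_N},\bar{a}_i)$ does not let you compare $\tp(\bar{a}_i/B\bar{c})$ with $\tp(\bar{a}_i/B\bar{c}')$. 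In fact the step fails outright for small $B$: take $T$ to be the theory of the betweenness relation $R$ of a dense linear order, let $\langle a_n : n\in\omega\rangle$ be an increasing sequence of singletons, and take $B=\emptyset$. This is indiscernible over $\emptyset$, and with $\bar{c}=(a_0,a_1)$, $\bar{c}'=(a_1,a_0)$ we have $\tp(\bar{c}/\emptyset)=\tp(\bar{c}'/\emptyset)$ (witnessed by an order-reversing automorphism), yet $R(a_0,a_1,a_2)$ holds while $R(a_1,a_0,a_2)$ fails, so $p_2=\tp(a_2/a_0a_1)$ splits over $\emptyset$. So the forward implication for (2) as written is not simply awaiting careful bookkeeping -- it needs a restriction on $B$ (e.g.\ that $B$ contains a model, as in all the paper's uses) or a weakening of (2) to only concatenations of full $\bar{a}_j$'s in increasing order, which is the form the backward induction actually consumes.
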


	By contrast, if $C\supseteq M$ and $\<A_i:i\in I\>/C$ is an $M$-f.s.\ sequence over $C$, then (2) is satisfied, but (1) may fail.  
	In the case where $C\supseteq M$ is full, (1) reduces to a question about types over $C$.
	
	\begin{lemma} \label{charind}  Suppose $C\supseteq M$ is full and $\<A_i:i\in I\>$ is an $M$-f.s.\ sequence over $C$.
		Then $\<A_i:i\in I\>$ is indiscernible over $C$ if and only if $\tp(A_i/C)=\tp(A_j/C)$ for all $i,j\in I$.
	\end{lemma}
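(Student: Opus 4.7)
The forward direction is immediate, since indiscernibility over $C$ forces all $A_i$ (under matching enumerations) to have the same type over $C$. So the work is in the converse, and my plan is to reduce it to the classical criterion, Lemma \ref{old}.

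Fix enumerations $\bar a_i$ of each $A_i$ so that $\tp(\bar a_i/C)=\tp(\bar a_j/C)$ for all $i,j\in I$ (this is what the hypothesis $\tp(A_i/C)=\tp(A_j/C)$ amounts to under the silent-enumeration convention). Set $p_i(\bar x)=\tp(\bar a_i/CA_{<i})$. I need to verify the two clauses of Lemma \ref{old} with $B=C$: (1) for all $i\le j$, $\bar a_j\models p_i$, and (2) each $p_i$ does not split over $C$.

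For clause (2), note that $p_i$ is finitely satisfied in $M$ by definition of an $M$-f.s.\ sequence over $C$, hence does not split over $M$ by Fact \ref{basic}(3). Because $M\subseteq C$, any two parameters from $CA_{<i}$ having the same type over $C$ also have the same type over $M$, so non-splitting over $M$ upgrades to non-splitting over $C$. This step uses nothing about fullness.

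For clause (1), which is where fullness enters, I compare two candidates to extend the common type $\tp(\bar a_i/C)=\tp(\bar a_j/C)$ to the set $CA_{<i}$. The first is $p_i=\tp(\bar a_i/CA_{<i})$, which is finitely satisfied in $M$. The second is the restriction $\tp(\bar a_j/CA_{<i})$ of $\tp(\bar a_j/CA_{<j})$; it extends $\tp(\bar a_j/C)=\tp(\bar a_i/C)$ and is finitely satisfied in $M$, being the restriction of a type finitely satisfied in $M$. Applying Lemma \ref{stationary} (which is exactly where the fullness of $C$ is used) to the type $\tp(\bar a_i/C)\in S(C)$ gives uniqueness of such an extension, so $\tp(\bar a_j/CA_{<i})=p_i$ and hence $\bar a_j\models p_i$.

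The only real obstacle is keeping the roles of $M$ and $C$ straight: non-splitting over $M$ suffices for clause (2), but clause (1) needs the stronger fact that the $M$-finitely-satisfiable extension of a type over $C$ is unique, which is exactly the content of fullness via Lemma \ref{stationary}. Once these two observations are in place, Lemma \ref{old} delivers indiscernibility of $\langle\bar a_i:i\in I\rangle$ over $C$ immediately.
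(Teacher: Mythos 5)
Your proof is correct and follows essentially the same route as the paper: reduce to Lemma \ref{old} with $B=C$, observe that both $\tp(\abar_i/CA_{<i})$ and the restriction $\tp(\abar_j/CA_{<i})$ are finitely satisfied in $M$ and extend the common type over $C$, and invoke Lemma \ref{stationary} (via fullness) to conclude they are equal. The only difference is that you explicitly verify clause (2) of Lemma \ref{old} (non-splitting over $C$, upgraded from non-splitting over $M$), whereas the paper silently treats that as automatic and only remarks that it suffices to show $\abar_j$ realizes $p_i$; your version is a small but genuine gain in completeness.
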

	
	\begin{proof}  Left to right is clear.  For the converse, fix $i<j$.  By Lemma~\ref{old}, it suffices to show $A_j$ realizes $p_i$.  But this is clear, as both
		$\tp(A_i/A_{<i}C)$ and $\tp(A_j/A_{<i}C)$ are finitely satisfied in $M$ and $\tp(A_i/C)=\tp(A_j/C)$.  Since $C\supseteq M$ is full, $\tp(A_j/A_{<i}C)=\tp(A_i/A_{<i}C)=p_i$
		by Lemma~\ref{stationary}.
	\end{proof}
	
	In terms of existence of such sequences, we have the following. 
	
	\begin{lemma}  \label{Morleyexist}  Suppose $C\supseteq M$ is full and $p(\xbar)\in S(C)$ is finitely satisfied in $M$.  Then for every $(I,\le)$ there is
		an $M$-f.s.\ sequence $\<\abar_i:i\in I\>$ over $C$ of realizations of $p$, hence is also indiscernible over $C$.
	\end{lemma}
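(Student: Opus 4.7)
The plan is to construct the sequence by a compactness argument essentially identical to the one used in Proposition~\ref{baseextension}, and then read off indiscernibility from Lemma~\ref{charind}. By Fact~\ref{basic}(1), since $p$ is finitely satisfied in $M$, we may fix an ultrafilter $\U$ on $M^{\lg(\xbar)}$ with $p = Av(\U,C)$. Our goal is to produce $\<\abar_i : i \in I\>$ such that $\tp(\abar_i/CA_{<i}) = Av(\U, CA_{<i})$ for every $i \in I$; in particular each $\abar_i$ realizes $p$ and $\tp(\abar_i/CA_{<i})$ is finitely satisfied in $M$, making the sequence an $M$-f.s.\ sequence over $C$ of realizations of $p$.

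To build the sequence, take disjoint tuples of variables $\{\xbar_i : i \in I\}$ with $\lg(\xbar_i) = \lg(\xbar)$. For a finite non-empty $t = \{i_1 < \dots < i_n\} \subseteq I$ write $\xbar_t = \xbar_{i_1}\dots \xbar_{i_n}$, and define $w_t(\xbar_t) \in S_{\xbar_t}(C)$ recursively: set $w_{\{i\}}(\xbar_i) := Av(\U,C) = p$, and for $|t| > 1$ with $i^* = \max(t)$ and $s = t \setminus \{i^*\}$, set
\[
w_t(\xbar_t) := w_s(\xbar_s) \cup Av(\U, C\xbar_s),
\]
where the second piece is interpreted as formulas in the free variables $\xbar_{i^*}$ with parameters from $C$ and $\xbar_s$. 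An easy induction on $|t|$ shows each $w_t$ is a complete type over $C$ and that $w_{t'} \subseteq w_t$ whenever $t' \subseteq t$. By compactness, $w^* := \bigcup\{w_t : t \subseteq I \text{ finite, non-empty}\}$ is consistent and realized by some $\<\abar_i : i \in I\>$, which by construction satisfies $\tp(\abar_i/CA_{<i}) = Av(\U, CA_{<i})$ for every $i \in I$.

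Finally, since each $\abar_i$ realizes $p$, we have $\tp(\abar_i/C) = \tp(\abar_j/C)$ for all $i,j \in I$. Since $C \supseteq M$ is full and $\<\abar_i : i \in I\>$ is an $M$-f.s.\ sequence over $C$, Lemma~\ref{charind} gives that this sequence is also indiscernible over $C$. There is no real obstacle here; the only thing to be careful about is that $(I,\le)$ may not be well-ordered, which is precisely why the argument is framed as a compactness construction over all finite $t \subseteq I$ rather than a transfinite recursion.
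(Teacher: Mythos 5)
Your proof is correct and uses essentially the same ingredients as the paper: an ultrafilter $\U$ on $M^{\lg(\xbar)}$ with $p = Av(\U,C)$, a sequence with $\tp(\abar_i/CA_{<i}) = Av(\U, CA_{<i})$, and Lemma~\ref{charind} for indiscernibility. The only organizational difference is that the paper first reduces to $(I,\le) = (\omega,\le)$ by a one-line compactness argument and then does a straightforward $\omega$-recursion, whereas you handle arbitrary $(I,\le)$ directly by the finite-subset compactness construction lifted from Proposition~\ref{baseextension}; both are fine, and the paper's reduction is merely a shortcut around the concern you flag about $(I,\le)$ not being well-ordered.
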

	
	\begin{proof}  By compactness it suffices to prove this for $(I,\le)=(\omega,\le)$.
		By Fact~\ref{basic}(1), choose an ultrafilter $\U$ on $M^{\lg(\xbar)}$ and recursively let
		$\abar_i$ be a realization of $Av(\U,C\abar_{<i})$.  It is easily checked that $\<\abar_i:i\in\omega\>/C$ is an $M$-f.s.\ sequence over $C$ with $\tp(\abar_i/C)=p$ for each $i$.
		As $C\supseteq M$ is full,  it is also indiscernible over $C$ by Lemma~\ref{charind}.
	\end{proof}
	
	\section{The  f.s.\ dichotomy}
	
	We begin this section with the central dividing line of this paper.  Although unnamed, the concept appears in Lemma II 2.3 of \cite{Hanf}.
	
	\begin{definition}[f.s. dichotomy] \label{def:fs dich}  $T$ has the {\em f.s.\ dichotomy} if, for all models $M$, all finite tuples $\abar,\bbar \in \C$, if $\tp(\bbar/M\abar)$ is finitely satisfied in $M$, then for any singleton $c$,
		either $\tp(\bbar/M\abar c)$ or $\tp(\bbar c/M\abar)$ is finitely satisfied in $M$.
	\end{definition}
	
	It would be equivalent to replace $\abar$, $\bbar$ by sets $A,B \subset \C$ in the definition above, and this form will often be used.
	Much of the utility of the f.s.\ dichotomy is via the following extension lemma.

	\begin{lemma} [\cite{Hanf}*{Part I Claim 2.4}] \label{onepoint}  Suppose $T$ has the f.s.\ dichotomy and $\<A_i:i\in I\>$ is any $M$-f.s.\ sequence.  Then for every $c\in \C$, there is a simple extension
		$\<A_j':j\in J\>$ of
		$\<A_i:i\in I\>$ that includes $c$ that is also an $M$-f.s.\ sequence. Moreover, if $(I,\le)$ is a well-ordering with a maximum element, we may take $J=I$.
	\end{lemma}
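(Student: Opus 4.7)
The plan is to identify a position for $c$ by analyzing a cut of $I$ induced by the f.s.\ dichotomy. For each $j\in I$, condensation (Lemma~\ref{manipulations}) gives that $\tp(A_{\geq j}/MA_{<j})$ is finitely satisfied in $M$, so the dichotomy applied with the singleton $c$ yields either (i) $\tp(A_{\geq j}/MA_{<j}c)$ is f.s.\ in $M$, or (ii) $\tp(A_{\geq j}c/MA_{<j})$ is f.s.\ in $M$. I would set $\Pi:=\{j\in I:\text{(ii) holds}\}$ and $\Sigma':=I\setminus\Pi$, each $j\in\Sigma'$ then satisfying (i). A short argument combining shrinking with the transitivity of Fact~\ref{basic}(4), applied to $\tp(A_{[j',j)}/MA_{<j'})$ and $\tp(A_{\geq j}c/MA_{<j})$, shows that $\Pi$ is downward closed, so that $\Sigma'$ is upward closed.

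I would then split into three cases. If $\Pi$ has a maximum $j_0$, take $J=I$ and $A'_{j_0}:=A_{j_0}\cup\{c\}$; the f.s.\ condition at $j_0$ is $j_0\in\Pi$ directly (after shrinking), and for each $i>j_0$ one has $i\in\Sigma'$, hence $\tp(A_i/MA_{<i}c)$ is f.s.\ by shrinking. If $\Pi$ has no maximum but $\Sigma'$ has a minimum $j^*$, then $j^*$ has no immediate predecessor in $I$; I would first establish $\tp(c/MA_{<j^*})$ is f.s.\ in $M$ by compactness: any finite $F\subseteq A_{<j^*}$ lies in $A_{<j}$ for some $j<j^*$ (since $\{i<j^*\}$ has no maximum), and shrinking from $j\in\Pi$ gives $\tp(c/MA_{<j})$ f.s. (The subcase $j^*=\min I$ reduces to $\tp(c/M)$ f.s., automatic from $M\preceq\C$.) Taking $J=I$ and $A'_{j^*}:=A_{j^*}\cup\{c\}$, the needed condition $\tp(A_{j^*}\cup\{c\}/MA_{<j^*})$ then combines $\tp(c/MA_{<j^*})$ f.s.\ with $\tp(A_{j^*}/MA_{<j^*}c)$ f.s.\ (the latter from $j^*\in\Sigma$) via transitivity. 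The remaining case—neither $\Pi$ has a maximum nor $\Sigma'$ has a minimum, including $\Pi=I$ with no maximum—cannot occur when $(I,\leq)$ is a well-ordering with a maximum; here I would insert a new index $*$ in the cut between $\Pi$ and $\Sigma'$ with $A'_*:=\{c\}$, using the same compactness argument to secure $\tp(c/MA_\Pi)$ f.s.

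The moreover clause follows because in a well-ordering with a maximum every non-empty upward closed subset has a minimum, so either $\Pi=I$ (handled by the first case at the maximum $m$ of $I$) or $\Sigma'$ has a minimum $j^*$, which places us in one of the first two cases; each preserves $J=I$. I expect the main obstacle to be the limit subcase where $j^*$ has no immediate predecessor, where $c$ must be moved from the base of one f.s.\ type to the top of another, an exchange only justified by the joint application of compactness and transitivity.
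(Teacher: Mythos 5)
Your proof is correct and follows the same route as the paper's, which works with the maximal initial segment $I_0\subseteq I$ such that $\tp(c/A_{I_0}M)$ is finitely satisfied in $M$; by the very dichotomy-plus-transitivity argument you use to show $\Pi$ is downward closed, one sees $\Pi=\set{j\in I: \tp(c/MA_{<j}) \text{ is f.s.\ in } M}$, which is exactly $I_0$ together with $\min(I\setminus I_0)$ when that minimum exists, so your cut and the paper's coincide and the f.s.\ conditions are checked identically. One refinement worth recording: your Case 2 is vacuous. Your own compactness argument there shows $\tp(c/MA_{<j^*})$ is finitely satisfied in $M$; but then the f.s.\ dichotomy at $j^*$ (alternative (ii) directly, or alternative (i) composed with $\tp(c/MA_{<j^*})$ via transitivity) forces $\tp(A_{\geq j^*}c/MA_{<j^*})$ to be finitely satisfied in $M$, i.e.\ $j^*\in\Pi$, contradicting $j^*\in\Sigma'$. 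So the limit subcase you flagged as the main obstacle cannot occur, a new index must be inserted only in your Case 3, and the `moreover' clause simplifies: in a well-ordering with a maximum, $\Pi$ itself always has a maximum, placing you directly in Case 1.
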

	
	\begin{proof}  Fix any  $M$-f.s.\ sequence $\<A_i:i\in I\>$ and choose any  singleton $c\in \C$.  Let $I_0\subseteq I$ be the maximal initial segment of $I$ such that
		$\tp(c/A_{I_0}M)$ is finitely satisfied in $M$.  Note that $I_0$ could be empty or all of $I$.  If the minimum element of $(I\setminus I_0)$ exists, name it $i^*$ and take $J = I$; 
		otherwise, let $J=I\cup\{i^*\}$, where $i^*$ is a new element realizing the cut $(I_0, I \bs I_0)$ and put $A_{i^*} = \emptyset$.
		
		Let $A_{i^*}':=A_{i^*}\cup\set{c}$ and $A_{i}':=A_i$ for all $i\neq i^*$.  We claim that $\<A_i':i\in J\>$ is an $M$-f.s.\ sequence.   
		To see this, note that $A_{<i^*}'=A_{<i^*}=A_{I_0}$ while $A'_{<j}$ properly extends $A_{I_0}$ for any $j>i^*$.  Thus, $\tp(c/A_{<i^*}M)$ is finitely satisfied in $M$, but
		$\tp(c/A_{<j}M)$ is not for every $j>i^*$.
		
		We first show that
		$\tp(A_{i^*}c/A_{<i^*}M)$ is finitely satisfied in $M$. This is immediate if $A_{i^*} = \emptyset$. If not and $A_{i^*} \neq \emptyset$, by the f.s.\ dichotomy (with $A_i^*$ as $\bbar$, $A_{<i^*}$ as $\abar$, and $c$ as $c$), we  have that $\tp(A_{i^*}/cA_{<i^*}M)$ is finitely satisfied in $M$.
		But this, coupled with $\tp(c/A_{<i^*}M)$ is finitely satisfied in $M$, would imply $\tp(A_{i^*}c/A_{<i^*}M)$ is finitely satisfied in $M$, a contradiction.
		
		To finish, we show that for every $j> i^*$, $\tp(A_{j}/cA_{<j}M)$ is finitely satisfied in $M$.  Again, if this were not the case, the f.s.\ dichotomy would 
		imply $\tp(cA_j/A_{<j}M)$ is finitely satisfied in $M$.  But then, by Shrinking, we would have $\tp(c/A_{<j}M)$ finitely satisfied in $M$, contradicting our choice above.
		
		For the `Moreover' sentence, the only concern is if $\tp(c/A_IM)$ is finitely satisfied in $M$. But in this case we may take $i^*$ to be the maximal element of $I$, rather than a new element in $J \bs I$.
	\end{proof}
	
	It is evident that Lemma~\ref{onepoint} extends to $M$-f.s.\ sequences $\<A_i:i\in I\>/C$ over an arbitrary base $C\supseteq M$.
	
	In \cite{BS}*{Theorem 4.2.6}, the f.s. dichotomy appears as a statement about the behavior of forking rather than non-forking. Namely, forking dependence is totally trivial and transitive on singletons. We may derive similar consequences for dependence from the f.s. dichotomy. This is stated in \cite{Blum}*{Corollary 5.22}, although missing the necessary condition of full $C$.
	
	\begin{lemma}
		Suppose $T$ has the f.s. dichotomy, and fix  $\abar,\bbar,c,M\preceq\C$.
		\begin{enumerate}
			\item If $\tp(\abar/Mc)$ and $\tp(c/M\bbar)$ are not finitely satisfiable in $M$, then neither is $\tp(\abar/M \bbar)$.
			\item $\tp(\abar/M\bbar)$ is  finitely satisfiable in $M$ if and only if  $\tp(a/M\bbar)$ is finitely satisfied in $M$ for all $a \in \abar$.
			\item Let $C \supseteq M$ be full. Then $\tp(\abar/C\bbar)$ is  finitely satisfiable in $M$ if and only if  $\tp(a/Cb)$ is finitely satisfied in $M$  for all $a\in\abar$,  $b \in \bbar$.
		\end{enumerate}
	\end{lemma}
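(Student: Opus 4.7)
For Part (1), I would prove the contrapositive directly: assuming $\tp(\abar/M\bbar)$ is finitely satisfied in $M$, the f.s.\ dichotomy applied to $\abar$, $\bbar$, and the singleton $c$ gives two alternatives: either $\tp(\abar/M\bbar c)$ is f.s.\ in $M$, whence so is $\tp(\abar/Mc)$ by shrinking the base, or $\tp(\abar c/M\bbar)$ is f.s.\ in $M$, whence so is $\tp(c/M\bbar)$ by shrinking the tuple.

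For Part (2), the forward direction is immediate by shrinking. For the converse I would induct on $|\abar|$. The inductive step, with $\abar = a_1 \ldots a_n$, uses the inductive hypothesis $\tp(a_1 \ldots a_{n-1}/M\bbar)$ f.s.\ in $M$ together with the f.s.\ dichotomy (for $c = a_n$): either $\tp(a_1 \ldots a_n/M\bbar)$ is f.s.\ in $M$, or $\tp(a_1 \ldots a_{n-1}/M\bbar a_n)$ is f.s.\ in $M$, and in this latter case combining with $\tp(a_n/M\bbar)$ via transitivity (Fact~\ref{basic}(4)) gives the desired conclusion.

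For Part (3), the forward direction is again routine. For the converse, I first observe that the induction from (2) applies verbatim with $M\bbar$ replaced by $C\bbar$, since the f.s.\ dichotomy holds for arbitrary sets containing $M$. This reduces the task to proving, for each singleton $a \in \abar$, that $\tp(a/C\bbar)$ is f.s.\ in $M$. I would then induct on $|\bbar|$: writing $\bbar = b_1 \ldots b_n$, the inductive hypothesis yields $\tp(a/Cb_1\ldots b_{n-1})$ f.s.\ in $M$, and the f.s.\ dichotomy with $c = b_n$ gives either the desired conclusion, or the awkward alternative $(\star)$: $\tp(ab_n/Cb_1 \ldots b_{n-1})$ is f.s.\ in $M$.

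The main obstacle is handling $(\star)$, where fullness of $C$ becomes essential. Since $\tp(a/Cb_n)$ is f.s.\ in $M$ by hypothesis, Proposition~\ref{baseextension} extends the base of the singleton sequence $\<a\>$ from $Cb_n$ to $C\bbar$, yielding $\bbar^* \equiv_{Cb_n} \bbar$ (so the $b_n$-coordinate is preserved) with $\tp(a/C\bbar^*)$ f.s.\ in $M$. Pulling back by an automorphism of $\C$ fixing $Cb_n$ pointwise that sends $\bbar^* \mapsto \bbar$ produces $a' \in \C$ with $\tp(a'/C\bbar)$ f.s.\ in $M$ and $\tp(a'/Cb_n) = \tp(a/Cb_n)$. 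To conclude, I would argue that $\tp(ab_n/Cb_1\ldots b_{n-1}) = \tp(a'b_n/Cb_1\ldots b_{n-1})$: both are f.s.\ in $M$ (the first by $(\star)$; the second by transitivity from $\tp(b_n/Cb_1\ldots b_{n-1})$ f.s.\ in $M$, obtained by shrinking $(\star)$, and $\tp(a'/C\bbar)$ f.s.\ in $M$), and both extend the common type $\tp(ab_n/C) = \tp(a'b_n/C)$. The stationarity lemma (Lemma~\ref{stationary}), whose hypothesis is precisely fullness of $C$, then forces equality, so $\tp(a/C\bbar) = \tp(a'/C\bbar)$ is f.s.\ in $M$.
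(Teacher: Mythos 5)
Your proof is correct and follows essentially the same route as the paper. Parts (1) and (2) match the paper's argument exactly (contrapositive via the dichotomy plus shrinking; induction on $\lg(\abar)$ using the dichotomy and transitivity). For part (3) the paper also inducts on $\lg(\bbar)$ and, in the awkward case where $\tp(\abar b^*/C\bbar')$ is finitely satisfied in $M$, simply cites Lemma~\ref{1.6}; your handling of $(\star)$ via Proposition~\ref{baseextension}, transitivity, and Lemma~\ref{stationary} is precisely a re-derivation of Lemma~\ref{1.6}'s proof specialized to this situation (with your preliminary reduction of $\abar$ to a singleton being a harmless extra step). So while you didn't invoke Lemma~\ref{1.6} by name, you reconstructed its content, and the underlying mathematics is the same.
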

	\begin{proof}
		$(1)$ If $\tp(\abar/M \bbar)$ is finitely satisfiable in $M$, then by the f.s. dichotomy either $\tp(\abar/M \bbar c)$ or $\tp(\abar c/M \bbar)$ is as well. Shrinking then gives a contradiction.
		
		$(2)$ By induction on $\lg(\abar)$.  Left to right is immediate, so assume $\tp(a/M\bbar)$ is finitely satisfied in $M$ for every $a\in\abar$.  Write $\abar=\abar' a^*$.  By induction we
		may assume $\tp(\abar'/M\bbar)$ is finitely satisfied in $M$.  By the f.s.\ dichotomy, either $\tp(\abar' a^*/M\bbar)$ is finitely satisfied in $M$ and we are done immediately, or else
		$\tp(\abar'/M\bbar a^*)$ is finitely satisfied in $M$, and we finish using transitivity from Fact~\ref{basic}.

		$(3)$ Left to right is immediate by Shrinking, so assume $\tp(a/Cb)$ is finitely satisfied in $M$ for every $a\in\abar$ and $b\in\bbar$.  It follows from (2) that $\tp(\abar/Cb)$ is finitely satisfied in $M$ for every $b\in \bbar$.  To conclude that $\tp(\abar/C\bbar)$ is finitely satisfied in $M$, we argue by induction on $\lg(\bbar)$.
		Let $\bbar = \bbar' b^*$, and by induction assume the statement is true for $\bbar'$. 
		
		By the f.s. dichotomy, either $\tp(\abar/M\bbar' b^*)$ or $\tp(\abar'b^*/M\bbar')$ is finitely satisfiable in $M$. In the first case we are finished immediately, and in the second 
		we finish by invoking  Lemma \ref{1.6}.
	\end{proof}
	
	In the stable case, forking dependence is symmetric as well and so yields an equivalence relation on singletons, which is used in \cite{BS} to decompose models into trees of submodels. In general, the f.s. dichotomy shows finite satisfiability yields a quasi-order on singletons when working over a full $C \supseteq M$. Taking the classes of this quasi-order in order naturally gives an irreducible decomposition of $\C$ over $M$ in the sense of the next subsection, but we sometimes wish to avoid having to work over a full $C \supseteq M$.
	
	\subsection{Decompositions of models}
	
	In this subsection, we characterize the f.s.\ dichotomy in terms of extending partial decompositions to full decompositions of models.
	
	\begin{definition}[$M$-f.s. decomposition] \label{def:decomp}  Suppose $X \subseteq \C$ is any set. 
		\begin{itemize}
			\item   A {\em partial $M$-f.s.\ decomposition of $X$} is an $M$-f.s.\ sequence $\<A_i:i\in I\>$ with $\bigcup_{i\in I} A_i\subseteq X$.
			\item  An {\em $M$-f.s.\ decomposition of $X$} is a partial $M$-f.s.\ decomposition with $\bigcup_{i\in I}A_i=X$.
			\item  An $M$-f.s.\ decomposition of $X$ is {\em irreducible} if, for every $i\in I$ and for every $a,b\in A_i$, neither $\tp(a/MA_{<i}b)$ nor $\tp(b/MA_{<i}a)$ are finitely satisfied in $M$.
		\end{itemize}
	\end{definition}

	By iterating  Lemma~\ref{onepoint} for every $c\in X$ for a given set $X$ we obtain:
	
	\begin{lemma}  \label{secondpoint}  Suppose that $T$ has the f.s.\ dichotomy.  For any set $X\subseteq \C$ and any $C\supseteq M$, any partial   $M$-f.s.\ decomposition 
		$\<A_i:i\in I\>/C$ of $X$ has a 
		simple extension $\<A_j':j\in J\>/C$ to an $M$-f.s.\ decomposition of $X$ over $C$.  If the sets $\set{A_i:i\in I}$ were pairwise disjoint, we may choose $\set{A_j':j\in J}$ to be pairwise disjoint as well.  
		Furthermore, if $(I,\le)$ is a well-ordering with a maximum element, we may take $J=I$.
	\end{lemma}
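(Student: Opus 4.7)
The plan is to iterate the one-point extension of Lemma~\ref{onepoint} transfinitely, absorbing one element of $X\setminus\bigcup_{i\in I}A_i$ at a time. Enumerate $X\setminus\bigcup_{i\in I}A_i=\{c_\alpha:\alpha<\lambda\}$ and construct by recursion on $\alpha\le\lambda$ a chain of partial $M$-f.s.\ decompositions $\<A^\alpha_j:j\in I^\alpha\>$ over $C$ covering $\bigcup_iA_i\cup\{c_\beta:\beta<\alpha\}$, with $I^\beta\subseteq I^\alpha$ and $A^\beta_j\subseteq A^\alpha_j$ for $j\in I^\beta$ whenever $\beta<\alpha$.  Take $\<A^0_j:j\in I^0\>:=\<A_i:i\in I\>$.

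At a successor stage $\alpha+1$, if $c_\alpha$ is already covered, repeat the $\alpha$-th sequence. Otherwise, apply the over-$C$ version of Lemma~\ref{onepoint} (noted immediately after its proof) to adjoin $c_\alpha$; this either enlarges a single existing block $A^\alpha_{i^*}$ to $A^\alpha_{i^*}\cup\{c_\alpha\}$, or inserts a fresh singleton block $\{c_\alpha\}$ at a new index realizing a cut in $I^\alpha$. At a limit stage $\alpha$, set $I^\alpha:=\bigcup_{\beta<\alpha}I^\beta$ with the induced order and $A^\alpha_j:=\bigcup\{A^\beta_j:\beta<\alpha,\ j\in I^\beta\}$. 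Any formula $\theta(\ybar,\bbar)\in\tp(A^\alpha_j/CA^\alpha_{<j})$ involves only finitely many parameters from $A^\alpha_j\cup A^\alpha_{<j}\cup C$, all appearing at some stage $\beta<\alpha$; the f.s.\ condition at that stage provides the required realization in $M$, so the union is an $M$-f.s.\ sequence over $C$. At $\alpha=\lambda$ we obtain the desired $M$-f.s.\ decomposition of $X$, which is a simple extension of $\<A_i:i\in I\>$ by construction.

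Pairwise disjointness is preserved at each successor stage, since the newly adjoined $c_\alpha$ was previously uncovered and is either appended to a single block or placed alone; limits trivially preserve disjointness. For the ``furthermore'' clause, invoking the ``moreover'' part of Lemma~\ref{onepoint} at each successor keeps $I^{\alpha+1}=I^\alpha$, so $I^\lambda=I$ throughout. The heart of the argument is Lemma~\ref{onepoint}; the main obstacle here is simply the bookkeeping at limit stages, where new indices may have been inserted densely between old ones, but this is handled by the finite character of finite satisfiability.
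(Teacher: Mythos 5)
Your proposal is correct and matches the paper's approach: the paper gives no explicit proof beyond the remark that Lemma~\ref{secondpoint} follows ``by iterating Lemma~\ref{onepoint} for every $c\in X$,'' and you have simply spelled out that transfinite iteration, including the (routine but worth noting) use of the finite character of finite satisfiability at limit stages.
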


		\begin{proposition}  \label{decomp}  $T$ has the f.s.\ dichotomy if and only if for all models $M,N\preceq \C$ (we do not require $M\subseteq N$)	every partial $M$-f.s.\ decomposition  $\<A_i:i\in I\>$ of $N$ has an irreducible  $M$-f.s.\ decomposition of $N$ extending it.
	\end{proposition}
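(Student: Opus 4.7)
Plan. We prove the two directions separately.

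$(\Leftarrow)$: By contrapositive. Suppose the f.s.\ dichotomy fails, so there exist $M \preceq \C$, finite tuples $\abar, \bbar$, and a singleton $c$ with $\tp(\bbar/M\abar)$ finitely satisfied in $M$ but neither $\tp(\bbar/M\abar c)$ nor $\tp(\bbar c/M\abar)$ finitely satisfied in $M$. Choose any $N \preceq \C$ containing $M \cup \abar \cup \bbar \cup \{c\}$. Then $\langle \abar, \bbar \rangle$ is a partial $M$-f.s.\ decomposition of $N$, which by hypothesis extends to an irreducible $M$-f.s.\ decomposition $\langle B_k : k \in K \rangle$ of $N$. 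Since this extension preserves the order $\abar \prec \bbar$, partition $K$ into five convex regions depending on whether blocks lie before, meet, strictly between, meet, or lie after $\abar$ and $\bbar$; by Condensation (Lemma~\ref{manipulations}) the sequence collapses to a five-term $M$-f.s.\ sequence $\langle R_1, R_2, R_3, R_4, R_5 \rangle$ with $\abar \subseteq R_2$ and $\bbar \subseteq R_4$. If $c \in R_1 \cup R_2 \cup R_3$, further condensation followed by Shrinking produces a two-term $M$-f.s.\ sequence $\langle \abar c, \bbar \rangle$, yielding $\tp(\bbar/M\abar c)$ f.s.\ in $M$; if $c \in R_4 \cup R_5$, we similarly obtain $\tp(\bbar c/M\abar)$ f.s.\ in $M$. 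Each case contradicts the witnessing choice.

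$(\Rightarrow)$: Given a partial $M$-f.s.\ decomposition $\langle A_i : i \in I \rangle$ of $N$, apply Lemma~\ref{secondpoint} to extend it to an (a priori non-irreducible) $M$-f.s.\ decomposition $\langle A_j' : j \in J \rangle$ of $N$. Refine each $A_j'$ over $C_j := M A_{<j}'$ as follows. Define on $A_j'$ the binary relation $a \equiv_j b$ to mean that \emph{neither} $\tp(a/C_j b)$ \emph{nor} $\tp(b/C_j a)$ is f.s.\ in $M$. This is an equivalence relation: reflexivity for $a \notin M$ and symmetry are immediate, and transitivity follows from a double application of the f.s.\ dichotomy. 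Indeed, if $a \equiv_j b \equiv_j c$ but $\tp(a/C_j c)$ were f.s.\ in $M$, the dichotomy applied to $\tp(a/C_j c)$ with additional parameter $b$ would give either $\tp(a/C_j cb)$ f.s.\ (hence $\tp(a/C_j b)$ f.s.\ by passing to the smaller base $C_j b \subseteq C_j cb$, contradicting $a \equiv_j b$) or $\tp(ab/C_j c)$ f.s.\ (hence $\tp(b/C_j c)$ f.s.\ by Shrinking of variables, contradicting $b \equiv_j c$); symmetrically $\tp(c/C_j a)$ is not f.s. Take the $\equiv_j$-classes as sub-blocks of $A_j'$; they are irreducible over any enlargement of $C_j$, since ``not f.s.''\ is preserved under enlarging the parameter set.

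It remains to order the $\equiv_j$-classes into an $M$-f.s.\ sequence over $C_j$. Distinct classes $E \neq E'$ satisfy at least one of $\tp(b/C_j a)$ or $\tp(a/C_j b)$ f.s.\ in $M$ for $a \in E$, $b \in E'$ (otherwise $a \equiv_j b$, contradicting $E \neq E'$). Define $E \preceq E'$ by $\tp(b/C_j a)$ f.s.\ in $M$; independence of representatives follows from an analogous double-dichotomy argument. Linearize $\preceq$ and take the sub-blocks in this order, then concatenate across $j$ to obtain the desired decomposition; it is irreducible (by the preceding paragraph), extends $\langle A_i \rangle$, and is an $M$-f.s.\ sequence provided the pairwise f.s.\ relations encoded in $\preceq$ upgrade to joint finite satisfiability $\tp(B_k^j/C_j B_{<k}^j)$ f.s.\ in $M$. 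The main obstacle lies precisely in this last step: over the non-full base $C_j$, joint f.s.\ does not automatically reduce to pairwise. The standard remedy is to invoke Proposition~\ref{baseextension} to realize a full $C_j^* \supseteq C_j$ over which $\langle A_j' \rangle$ remains $M$-f.s., apply the full-base characterization to reduce joint f.s.\ over $C_j^*$ to pairwise statements, and then pull the joint f.s.\ back down from $C_j^*$ to $C_j$ using the trivial implication that f.s.\ over a larger base implies f.s.\ over a smaller; one must also check that $\preceq$ admits a linear extension compatible with this procedure.
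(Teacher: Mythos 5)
Your $(\Leftarrow)$ direction is essentially the paper's: extend $\langle\abar,\bbar\rangle$ to an $M$-f.s.\ decomposition of a suitable $N$, locate $c$ in the decomposition, condense, and shrink. The five-region breakdown and the appeal to irreducibility are unnecessary (a plain $M$-f.s.\ decomposition suffices), but the argument is correct.

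Your $(\Rightarrow)$ direction, however, takes a genuinely different route from the paper's, and the route has a gap you yourself flag but do not close. The paper does \emph{not} attempt to build the irreducible decomposition globally by partitioning each block into equivalence classes. Instead, after extending to a full decomposition via Lemma~\ref{secondpoint}, it invokes Zorn's Lemma: it suffices to show that whenever a block $B_k$ contains $a,b$ with $\tp(b/MB_{<k}a)$ finitely satisfied in $M$, the sequence admits a blow-up separating them. For this one takes $\langle a,b\rangle/B_{<k}$ as a partial decomposition of $B_k$ over $MB_{<k}$, extends it one point at a time (Lemma~\ref{onepoint} via Lemma~\ref{secondpoint}), and splices the result back in with Lemma~\ref{blow}. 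Because the extension proceeds \emph{one singleton at a time}, the joint finite satisfiability of each new block over everything before it is established at each step by the f.s.\ dichotomy itself, so the ``pairwise-to-joint'' problem never arises.

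By contrast, your construction takes the $\equiv_j$-classes of $A_j'$ over $C_j = MA_{<j}'$ in one shot and then must upgrade pairwise finite satisfiability between classes to joint finite satisfiability of $\tp(E_k/C_jE_{<k})$. Your proposed remedy does not work: if you re-define the relation and ordering over a full $C_j^* \supseteq C_j$ (which you need, since the pairwise-to-joint reduction is only valid over a full base), the classes $\equiv_j^*$ are in general strictly coarser than the $\equiv_j$-classes, and irreducibility of an $\equiv_j^*$-class over the actual bases that appear in the final decomposition (namely $C_j$ together with the \emph{preceding sub-blocks}, which need not contain $C_j^*$) is not implied by irreducibility over $C_j^*$. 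If instead you keep the $\equiv_j$-classes to retain irreducibility, you have no mechanism for establishing joint finite satisfiability over $C_j$, since $C_j$ is not full. A secondary issue is that you do not verify that $\preceq$ is antisymmetric on classes (both directions of pairwise f.s.\ may hold simultaneously between distinct classes), so it is a total pre-order and additional care is needed in linearizing; but this is minor compared to the joint-f.s.\ gap. The cleanest repair is to abandon the global equivalence-class construction and adopt the paper's Zorn-plus-local-blow-up scheme, which the existing Lemmas~\ref{onepoint}, \ref{secondpoint}, and~\ref{blow} are designed to support.
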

	
	\begin{proof} 
		($\Leftarrow$) Suppose partial decompositions extend, and let $\abar, \bbar, c \in \C, M \prec \C$ with $\tp(\bbar/\abar)$ finitely satisfiable in $M$, and let $N \prec \C$ with $\abar, \bbar, c \in N$. So $\< \abar, \bbar \>$ is an $M$-f.s. sequence, and can be extended to an $M$-f.s. decomposition of $N$. After Shrinking, we obtain the conclusion of the f.s.-dichotomy.
		
		($\Ra$) Given a partial  $M$-f.s.\ decomposition  $\<A_i:i\in I\>$ of $N$, apply Lemma~\ref{secondpoint} to get a simple extension
		$\<B_k:k\in K\>$ that is an $M$-f.s.\ decomposition of $N$.  By Zorn's Lemma, it will suffice to show that if there is $k \in K$ with $a,b \in B_k$ and $\tp(b/B_{<k}a)$ is finitely satisfiable in $M$, then we may blow up the sequence so that $a$ and $b$ are in distinct parts. Given such $a, b \in B_k$, we have $\<a,b\>/B_{<k}$ is a partial $M$-f.s. decomposition of $B_k$ over $B_{<k}$. Extending this to a full decomposition of $B_k$ and then applying Lemma \ref{blow} to prepend $B_{<k}$ and append $B_{>k}$ gives the result.
	\end{proof}

	\begin{remark} {\em  We could do the above proof over some full $C \supseteq M$ to obtain an irreducible $M$-f.s. decomposition that is also an order-congruence. }
	\end{remark}
	
	We note that at the end of \cite{BS}, Baldwin and Shelah conjecture that models of monadically NIP theories should admit tree decompositions like those they describe for monadically stable theories, but with order-congruences in place of full congruences.
	
	\subsection{Preserving indiscernibility}
	
	We begin with some definitions.  The definition of dp-minimality given here may be non-standard, but it is proven equivalent to the usual definition with Fact~2.10 of
	\cite{DGL}. 
	
	\begin{definition}[Indiscernible-triviality and dp-minimality]  \label{def:ind triv}
		The first definition is meant to recall trivial forking.
		\begin{itemize}  
			\item  $T$ has  {\em indiscernible-triviality} if for every infinite indiscernible sequence $\I$ and every set $B$ of parameters, if $\I$ is indiscernible over each $b \in B$ then $\I$ is indiscernible over $B$.
			\textcolor{red}{\item  $T$ has  {\em endless indiscernible triviality} if for every infinite indiscernible sequence $\I$ without endpoints and every set $B$ of parameters, if $\I$ is indiscernible over each $b \in B$ then $\I$ is indiscernible over $B$.}
			\item  $T$ is {\em dp-minimal} if, for all indiscernible  sequences $\I=\<\abar_i:i\in I\>$ over any set $C$, every $b\in\C$ induces a finite partition
			of the index set into convex pieces 
			$\I=I_1\ll I_2\ll \dots\ll I_n$, with at most two  $I_j$ infinite
			and every $\I_j=\<\abar_i:i\in I_j\>$ is indiscernible over $Cb$.
		\end{itemize}
	\end{definition}

	As mentioned in the introduction, the notion of a theory admitting coding was the central dividing line of \cite{BS}. We weaken the definition here to allow the sequences to consist of tuples. Note that even the theory of equality would admit the further weakening of also allowing $C$ to consist of tuples.
	
	\begin{definition} [Admits coding (on tuples)] \label{def:tuple code}
		A theory $T$ {\em admits coding on tuples} if there is a formula  $\phi(\xbar, \ybar, z)$ (with parameters $\dbar$), sequences $\II =\<\abar_i : i \in I\>, \JJ = \<\bbar_j: j \in J\>$, 
		indexed by countable, dense orderings $(I,\le),(J,\le)$, respectively, and
		a set $\set{c_{i,j}| i \in I, j \in J}$, such that  $\II$ is indiscernible over $\bigcup \JJ \cup \dbar$,  $\JJ$ is indiscernible over $\bigcup \II \cup \dbar$, and  $\C \models \phi(\abar_i, \bbar_j, c_{k, l}) \iff (i,j) = (k,l)$.
		
		We call $\<\abar_i : i \in I\>, \<\bbar_j: j \in J\>, \set{c_{i,j}| i \in I, j \in J}, \phi(\xbar, \ybar, z)$ a {\em tuple-coding configuration}, and let $A = \bigcup\set{\abar_i : i \in I}, B = \bigcup \set{\bbar_j: j\in J}$ and $C = \set{c_{i,j}| i \in I, j \in J}$.
		
		$T$ {\em admits coding} if we may take $\II$ and $\JJ$ to be sequences of singletons.
	\end{definition}
	
	A convenient variant for this subsection is a {\em \joined}, which consists of a 
	formula (with parameters) $\phi(\xbar, \ybar, z)$, a sequence $\<\abar_i : i \in I\>$ indiscernible over the parameters of $\phi$, 
	indexed by an infinite linear order $(I,\le)$, and a set $\set{c_{i,j} | i < j \in I}$ such that for $i < j$, $\C \models \phi(\abar_i, \abar_j, c_{k, l}) \iff (i,j) = (k,l)$.
	Given a \joined, indexed by a countable, dense $(I,\le)$, we may construct a tuple-coding configuration by keeping $\phi(\xbar,\ybar,z)$ fixed, choosing open intervals $I', J' \subseteq I$ with $I' \ll J'$, and
	letting $\I'=\<\abar_i:i\in I'\>$ and $\J'=\<\abar_j:j\in J'\>$.   Conversely, given a tuple-coding configuration with $I = J$, we may construct a \joined\  by considering the indiscernible sequence whose elements are $\<\abar_i\bbar_i : i \in I\>$, restricting $C$ to elements $c_{i,j}$ with $i < j$, and replacing $\phi$ by $\phi^*(\xbar\xbar', \ybar\ybar', z) := \phi(\xbar, \ybar', z)$. 
	
	The following configuration appears in \cite{Hanf}*{Part II Lemma 2.2}, and will appear as an intermediate between a failure of the f.s. dichotomy and a tuple-coding configuration.
	
	\begin{definition}  A {\em pre-coding configuration} consists of a $\phi(\xbar,\ybar,z)$ with parameters and a sequence $\I=\<\dbar_i:i\in \Q\>$, indiscernible over the parameters of $\phi$, such that for some (equivalently, for every) $s<t$ from $\Q$, there is $c\in\C$ such that
		\begin{enumerate}
			\item  $\C\models\phi(\dbar_s,\dbar_t,c)$;
			\item  $\C\models \neg\phi(\dbar_s,\dbar_v,c)$ for all $v>t$; and
			\item $\C\models \neg\phi(\dbar_u,\dbar_t,c)$ for all $u<s$.
		\end{enumerate}
	\end{definition}
	
	We show the equivalence of the existence of these notions with the proposition below.   
	The proof of $(4) \Rightarrow (1)$ in the following is essentially from \cite{Hanf}*{Part II Lemma 2.3}, while $(3) \Ra (4)$ is based on \cite{ST}*{Lemma C.1}. The idea of $(4) \Ra (1)$ is that when working over a full $D \supseteq M$, types have a unique ``generic'' extension by Lemma \ref{stationary}. In a failure of the f.s. dichotomy, the extension of $\tp(c/D)$ to $\tp(c/Dab)$ is non-generic, and so $c$ can in some sense pick out $a$ and $b$ from a suitable sequence.
	
		\textcolor{red}{In Proposition \ref{preservingindisc}, ``indiscernible-triviality'' has been replaced with ``endless indiscernible triviality''. The adjustments needed for the proof of $(2) \Rightarrow (3)$ are minor, but $(1) \Rightarrow (2)$ needs an entirely new proof. These appear in Appendix \ref{app:indtriv}. We have also taken the opportunity to insert into $(3) \Rightarrow (4)$ a line about Ramsey and compactness that was omitted.}
	
	\begin{proposition}  \label{preservingindisc}  The following are equivalent for any theory $T$.
		\begin{enumerate}
			\item  $T$ has the f.s.\ dichotomy.
			\item  $T$ is dp-minimal and has \textcolor{red}{endless} indiscernible triviality.
			\item $T$ does not admit coding on tuples.
			\item  $T$ does not have a pre-coding configuration.
		\end{enumerate}
	\end{proposition}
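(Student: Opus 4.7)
My plan is to prove the equivalences via the cycle $(1) \Rightarrow (2) \Rightarrow (3) \Rightarrow (4) \Rightarrow (1)$. The authors flag $(4) \Rightarrow (1)$ (following \cite{Hanf}*{Part II Lemma 2.3}) and $(3) \Rightarrow (4)$ (a Ramsey extraction, cf.\ \cite{ST}*{Lemma C.1}) as the substantive directions; the remaining two rely on the $M$-f.s.\ machinery of \S2--\S3.

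For $(4) \Rightarrow (1)$ I would argue by contrapositive. Given $M, A, B, c$ witnessing a failure of the f.s.\ dichotomy, extend the base to a full $D \supseteq M$ via Proposition~\ref{baseextension} so that $\tp(B/DA)$ remains finitely satisfied in $M$; iterating Lemma~\ref{Morleyexist} produces an indiscernible sequence $\I = \langle A_i B_i : i \in \Q \rangle$ of realizations of $\tp(AB/D)$ carrying the same $M$-f.s.\ structure. The failure of f.s.\ of $\tp(B/MAc)$ (and of $\tp(Bc/MA)$) supplies a formula $\phi$ over $M$ with which $c$ asymmetrically detects the pair $(A, B)$; stationarity (Lemma~\ref{stationary}) over $D$ transports this to witnesses $c_{s, t}$ that detect precisely $(A_sB_s, A_tB_t)$ along $\I$, yielding the three asymmetric conditions of a pre-coding configuration on the diagonal sequence $\langle A_i B_i \rangle$. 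For $(3) \Rightarrow (4)$, also by contrapositive: given a pre-coding $(\I, \phi, \{c_{s, t}\}_{s < t})$, apply EM extraction to the family of pairs $\{(s, t) : s < t\}$ so that, after passing to an infinite subsequence (reindexed as $\Q$), the joint type $\tp(c_{s_1, t_1} \cdots c_{s_k, t_k}/\I)$ depends only on the order type of $\{s_i, t_i\}$ in $\Q$. Partition $\Q$ into disjoint dense open intervals $I' \ll J'$ and set $\II = \I \restriction I'$, $\JJ = \I \restriction J'$; the EM uniformity gives the required mutual indiscernibility of $\II$ and $\JJ$, and the detection condition is inherited from pre-coding across the $I' \ll J'$ gap.

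For $(1) \Rightarrow (2)$, I would derive dp-minimality and indiscernible-triviality separately. \emph{Dp-minimality:} given an indiscernible sequence $\I / C$ and a singleton $b$, realize $\I$ via Lemmas~\ref{weakexistence} and~\ref{strongexistence} as an $M$-f.s.\ sequence over a full $C^* \supseteq C \cup M$ with indiscernibility over $C^*$; apply Lemma~\ref{onepoint} to insert $b$ at a single cut index $i^*$, and apply Lemma~\ref{charind} to each of the two resulting segments (both being $M$-f.s.\ over $C^* b$ with constant 1-type) to obtain the required dp-minimal partition. \emph{Indiscernible-triviality:} proceed by induction on $|\bbar|$ for $\bbar \in B^{<\omega}$; in the inductive step, realize $\I$ as an $M$-f.s.\ sequence over a full $C$ with indiscernibility over both $C\bbar'$ and $Cb$ (via the `Furthermore' clauses of Lemmas~\ref{weakexistence} and~\ref{strongexistence}), then invoke Lemma~\ref{1.6} together with stationarity to conclude that $\I$ is $M$-f.s.\ over $C\bbar'b$ with constant 1-type, and apply Lemma~\ref{charind}. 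For $(2) \Rightarrow (3)$, by contrapositive: given a tuple-coding configuration, Ramsey-refine so that $\II$, $\JJ$, and the array $\{c_{i, j}\}$ together form an EM-indiscernible system over $\dbar$; a careful combination of dp-minimality (applied to $\II$ after adding a single code $c_{i_0, j_0}$) and indiscernible-triviality (applied to suitable sub-sequences extracted from the resulting partition) then produces an indiscernible sub-structure on which the coding condition $\phi(\abar_i, \bbar_{j_0}, c_{i_0, j_0}) \iff i = i_0$ cannot hold uniformly, yielding the required contradiction.

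\textbf{Main obstacle.} I expect $(4) \Rightarrow (1)$ to be the most technically involved: transporting a local failure of the f.s.\ dichotomy into uniform pre-coding witnesses satisfying the three asymmetric conditions requires careful bookkeeping of ultrafilters, iterated base extensions, and an essential use of stationarity (Lemma~\ref{stationary}) over the full base $D$. The argument for $(2) \Rightarrow (3)$ is also subtle, requiring a delicate combinatorial combination of dp-minimality and indiscernible-triviality against the strong mutual indiscernibility of tuple-coding.
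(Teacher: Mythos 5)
Your proposed cycle $(1)\Rightarrow(2)\Rightarrow(3)\Rightarrow(4)\Rightarrow(1)$ matches the paper's, and your sketches of $(4)\Rightarrow(1)$ (extend base, realize an indiscernible $M$-f.s.\ sequence, transport $c$ via stationarity) and of the dp-minimality half of $(1)\Rightarrow(2)$ (insert $b$ at a single cut via Lemma~\ref{onepoint}/\ref{strongexistence}) are essentially the paper's arguments. However, there are two genuine gaps.

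First, in the indiscernible-triviality half of $(1)\Rightarrow(2)$, you propose an induction on $|\bbar|$ whose inductive step requires ``realize $\I$ as an $M$-f.s.\ sequence over a full $C$ with indiscernibility over both $C\bbar'$ and $Cb$ (via the `Furthermore' clauses of Lemmas~\ref{weakexistence} and~\ref{strongexistence}).'' Lemma~\ref{strongexistence} has no such `Furthermore' clause, and no lemma in the paper produces a \emph{full} $C\supseteq M$ while preserving indiscernibility over additional, fixed parameters: the passage to a full base in Lemma~\ref{strongexistence} moves $C$ by an automorphism over $M$, which in general destroys indiscernibility over $b$ or $\bbar'$. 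The paper explicitly flags exactly this obstacle (``we cannot apply Lemma~\ref{strongexistence} and maintain the indiscernibility over each $b\in B$'') and instead works over $M$ itself, partitioning $B$ into \emph{high} and \emph{low} elements according to whether $\tp(b/M\I)$ is finitely satisfied in $M$, using automorphisms plus finite satisfiability for the low part, and the position of high elements at the end of the $M$-f.s.\ sequence for the high part. Your inductive scheme does not address the mixed case and, as stated, relies on a nonexistent lemma.

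Second, in $(3)\Rightarrow(4)$, your plan is to EM-extract so that $\tp(c_{s_1,t_1}\cdots c_{s_k,t_k}/\I)$ depends only on order type and then restrict to a gap $I'\ll J'$, claiming ``the detection condition is inherited from pre-coding across the $I'\ll J'$ gap.'' It is not. The three pre-coding clauses constrain $\phi(\dbar_u,\dbar_v,c_{s,t})$ only for the order types $(u,v)=(s,t)$, $u=s<t<v$, and $u<s<v=t$; after restricting to $u,s\in I'$ and $v,t\in J'$ there remain six uncontrolled order types (e.g.\ $u=s, v<t$; $u>s, v=t$; $u<s, v<t$; etc.), so the biconditional $\phi(\abar_u,\bbar_v,c_{s,t})\iff (u,v)=(s,t)$ does not follow. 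The paper's gadget --- replacing each $\abar_i$ by the triple $\bbar_i=\abar_{3i-1}\abar_{3i}\abar_{3i+1}$ and using $\psi(\xbar_-\xbar\xbar_+,\ybar_-\ybar\ybar_+,z):=\phi(\xbar,\ybar,z)\wedge\neg\phi(\xbar_-,\ybar,z)\wedge\neg\phi(\xbar,\ybar_+,z)$ --- is precisely designed to cancel these unknown cases: whenever $(i,j)\neq(k,\ell)$, the boundary index $3i-1$ (or $3j+1$) sits in the same order-type cell relative to $(3k,3\ell)$ as $3i$ (or $3j$), forcing two conjuncts of $\psi$ to disagree. You would still need this (or an equivalent device) even after your EM extraction; the gap alone is not enough.
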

	
	\begin{proof}  $(1)\Rightarrow(2)$:  Suppose $T$ has the f.s.\ dichotomy.  We begin with showing $T$ is dp-minimal.  Choose an indiscernible $\I=\<\abar_i:i\in I\>$ over a set $D$
		and any element $b\in \C$.  Applying Lemma~\ref{strongexistence} to $\I$ (in the theory $T_D$ naming constants for each $d\in D$) choose a model $M\supseteq D$ and a full set $C\supseteq M$ such that
		$\I$ is both indiscernible over $C$ and an $M$-f.s.\ sequence over $C$.  
		As in the proof of Lemma~\ref{onepoint}, choose a maximal initial segment $I_0\subseteq I$ such that
		$\tp(b/A_{I_0}C)$ is finitely satisfied in $M$.  If $(I\setminus I_0)$ has a minimal element $i^*$, let $I_1=(I\setminus (I_0\cup\set{i^*}))$, and let $I_1=I\setminus I_0$ otherwise.
		As $C$ is full, in either case we have an order-congruence of $\I\cup\set{b}$, so both $I_0$ and $I_1$ are indiscernible over $Cb$, which suffices.
		
		\st{Next, we show indiscernible-triviality.  We may assume $\I=(\abar_i:i\in\Q)$ is ordered by $(\Q,\le)$. The argument here is more involved, as given an infinite, indiscernible $\I$ and a set $B$ over which $\I$ is indiscernible over each $b\in B$,
		we cannot apply {Lemma~\ref{strongexistence}} and maintain the indiscernibility over each $b \in B$.  However, the proof of {Lemma~\ref{weakexistence}} allows
		us to find a model $M$ such that $\I$ is an $M$-f.s. sequence and is indiscernible over $Mb$ for every $b\in B$.  Now, call an element $b\in B$ {\em high} if
		$\tp(b/MI)$ is finitely satisfied in $M$.   By indiscernibility, if $\tp(b/MA_{<i})$ is finitely satisfied in $M$ for any $i$, then $b$ is high.  Let $B_1\subseteq B$
		denote the set of high elements, and let $B_0:=B\setminus B_1$ be the
		{\em low} (i.e., not high) elements of $B$.  As $T$ satisfies the f.s. dichotomy, \mbox{Lemma \ref{secondpoint}} implies $\I=(\abar_i:i\in\Q)$ has a simple extension (\mbox{Definition \ref{ext})} to an $M$-f.s. sequence with universe $(\bigcup \I)B$.
		Moreover, any such simple extension will condense to
		$B_0\smallfrown \I \smallfrown B_1$.}
		
		\st{Using this, we  argue that $\I$ is indiscernible over $MB$ in two steps.  
		First, we argue that $\I$ is indiscernible over $MB_0$.  To see this, fix $i<j$ from $\Q$ and let $p_i=\tp(\abar_i/A_{<i}MB_0)$.
		From the previous paragraph, $\I$ is an $M$-f.s. sequence over $MB_0$. So $p_i$ does not split over $M$, and so by \mbox{Lemma~\ref{old}} it suffices to prove that $\abar_j$ realizes $p_i$.
		Choose any $\phi(\xbar,\bbar,\mbar)\in p_i$ with $\mbar$ from $M$ and $\bbar$ from $B_0$. 
		To see that $\C\models\phi(\abar_j,\bbar,\mbar)$, choose an automorphism $\sigma\in Aut(\C)$ fixing $M$ and an initial segment 
		$(\abar_i:i\in I_0)$ pointwise that induces an order-preserving 
		permutation of $\I$ with $\sigma(\abar_i)=\abar_j$.  Clearly, $\C\models\phi(\abar_j,\sigma(\bbar),\mbar)$.
		It is easily seen that for every singleton $b'\in \sigma(\bbar)$,  $\I$ is indiscernible over $Mb'$ and, as $\sigma$ fixes
		$A_{I_0}$ pointwise, $b'$ is also low.  Thus, any simple extension to $(\bigcup \I)MB_0\sigma(\bbar)$ will condense to 
		$B_0\sigma(\bbar)\>\smallfrown(\abar_i:i\in\Q)\smallfrown B_1$.    In particular $\tp(\abar_j/M\bbar\sigma(\bbar))$ is finitely satisfied in $M$.  
		Thus, if
		$\C\models\neg\phi(\abar_j,\bbar,\mbar)$, by finite satisfiability there would be $\nbar$ from $M$ such that
		$\C\models\neg\phi(\nbar,\bbar,\mbar)\wedge \phi(\nbar,\sigma(\bbar),\mbar)$, which is impossible since $\sigma$ fixes $M$ pointwise.
		Thus, $\I$ is indiscernible over $MB_0$.}
		
		\st{Finally, to see that $\I$ is indiscernible over $MB_0B_1$, choose any $i_1<\dots i_k$, $j_1<\dots<j_k$ from $\Q$,  $\bbar$ from $MB_0$, and $\cbar$ from $B_1$
		and assume by way of contradiction that $\C\models\psi(\abar_{i_1},\dots,\abar_{i_k},\bbar,\cbar)\wedge\neg\psi(\abar_{j_1},\dots,\abar_{j_k},\bbar,\cbar)$.
		Recall $B_0\smallfrown \II \smallfrown B_1$ is an $M$-f.s. sequence, so $\tp(\cbar/M(\bigcup \I)B_0)$ is finitely satisfied in $M$, and so the same formula is true with some $\mbar$ from $M$ replacing $\cbar$.
		But this contradicts that $\I$ is indiscernible over $MB_0$.  Thus, $\I$ is indiscernible over $MB$.}
		
		\medskip\noindent 
		$(2)\Rightarrow(3)$:  Assume (2) holds, but  (3) fails, so there is a \joined\  $\<\abar_i: i \in I\>, \set{c_{i,j} | i < j \in I}, \phi(\xbar, \ybar, z)$ with $(I, \le)$ countable, dense. By naming constants, we may assume $\phi$ has no parameters.
		Choose $i<j$ from $I$. By dp-minimality, $(I,\le)$ is partitioned into finitely many convex pieces, indiscernible over $c_{i,j}$,  with at most two  pieces infinite.
		
		If $\abar_i, \abar_j$ are in the same convex piece, then taking $i < k < j$ we get $\phi(\abar_k, \abar_j, c_{i,j})$, contradicting our configuration. So suppose $\abar_i$ and $\abar_j$ are in different pieces. Then one of the pieces must be infinite, so by symmetry suppose the piece $I'$ containing $\abar_i$ is. By indiscernible-triviality $(\abar_i : i \in I')$ is indiscernible over $\abar_j c_{i,j}$. But then picking some $k \in I' \bs \set{i}$ again gives $\phi(\abar_k, \abar_j, c_{i,j})$. \textcolor{red}{The convex piece $I'$ containing $\abar_i$ might not be endless. But in this case, the convex piece containing $\abar_j$ must be $I \bs I'$, which is endless. So we may conclude the argument substituting $\abar_j$ for $\abar_i$ and $I \bs I'$ for $I'$.}
		
		\medskip\noindent 
		$(3)\Rightarrow(4)$: Assume $(4)$ fails, as witnessed by $\phi(\xbar, \ybar, z)$, $\<\abar_i :i \in \Q\>$,  and $\set{c_{i,j} : i < j \in \Q}$. \textcolor{red}{By Ramsey and compactness, we may assume that the truth value of $\phi(\abar_i, \abar_j, c_{k, \ell})$ depends only on the order-type of $ijk\ell$.} Define a new sequence $(\bbar_i : i \in \Z)$ where $\bbar_i = \abar_{3i-1}\abar_{3i}\abar_{3i+1}$. Let 
		$$\psi(\xbar_-\xbar\xbar_+, \ybar_-\ybar\ybar_+, z) := \phi(\xbar, \ybar, z) \wedge \neg \phi(\xbar_-, \ybar, z) \wedge \neg \phi(\xbar, \ybar_+, z)$$
		Also let $d_{i,j} = c_{3i,3j}$.  It is easily verified that  $(\bbar_i : i \in \Z), \set{d_{i,j}| i < j \in \Z}, \psi$ is a \joined.
		That $T$ admits coding on tuples now follows by the remarks following Definition~\ref{def:tuple code}.
		
		\medskip\noindent 
		$(4)\Rightarrow(1)$: Assume that $\abar$, $\bbar$, $M$, and $c$ form a counterexample to the f.s.\ dichotomy, i.e., $\tp(\bbar/M\abar)$ is finitely satisfied in $M$, but neither
		$\tp(\bbar c/M\abar)$ nor $\tp(\bbar/M \abar c)$ are.  As $\<\abar,\bbar\>$ is an $M$-f.s.\ sequence, by Proposition \ref{baseextension} choose a full $D\supseteq M$ such that
		$\<\abar,\bbar\>/D$ is an $M$-f.s.\ sequence over $D$.  Note that by transitivity, we also have $\tp(\abar\bbar/D)$ finitely satisfied in $M$.  
		
		\begin{claim*}  There is $c'$ such that $\abar\bbar c'\equiv_M \abar\bbar c$ with $\tp(\abar\bbar c'/D)$ finitely satisfied in $M$.
		\end{claim*}
		
		\begin{claimproof}  We first argue that every finite conjunction of formulas from $\tp(\abar\bbar/D)\cup\tp(\abar \bbar c/D)$ is satisfied in $M$.
			To see this choose $\phi(\xbar,\ybar,\dbar)\in \tp(\abar\bbar/D)$ and
			$\psi(\xbar,\ybar,z)\in \tp(\abar\bbar c/M)$ ($\phi$ and $\psi$ may also have hidden parameters from $M$) and we will show that
			$\phi(\xbar,\ybar,\dbar)\wedge\psi(\xbar,\ybar,z)$ has a solution in $M$.  Let $\theta(\xbar,\ybar,\dbar):=\phi(\xbar,\ybar,\dbar)\wedge\exists z\psi(\xbar,\ybar,z)$.
			As $\tp(\abar\bbar/D)$ is finitely satisfied in $M$, $\theta(\mbar,\nbar,\dbar)$ holds for some $\mbar,\nbar$ from $M$.  Thus, as $M\preceq\C$ and
			$\C\models\exists z\psi(\mbar,\nbar,z)$, there is $k\in M$ such that $\psi(\mbar,\nbar,k)$ holds, which suffices.

			Thus, by Fact~\ref{basic}(2), there is a complete type $p(\xbar,\ybar,z)\in S(D)$ extending $\tp(\abar\bbar/D)\cup\tp(\abar \bbar c/D)$ that is finitely satisfied in $M$.
			As $\tp(\abar\bbar/D)\subseteq p$, there is an element $c'$ so that $(\abar,\bbar,c')$ realizes $p$, proving the claim.
		\end{claimproof}
		
		By Lemma~\ref{Morleyexist}, choose an $M$-f.s.\  sequence $\<\abar_i\bbar_i c_i:i\in \Q\>$ over $D$ of realizations of $p$
		that is indiscernible over $D$. 
		Fix $s<t$ from $\Q$.  Note that since $\abar,\bbar,c,M$ witness the failure of the f.s.\ dichotomy, neither $\tp(\bbar_s c_s/D\abar_s)$ nor $\tp(\bbar_s/D\abar_s c_s)$ are finitely satisfied in $M$.
		As notation, let $\abar_{<s}=\bigcup\set{\abar_i:i<s}$ and $\bbar_{>t}=\bigcup\set{\bbar_j:j>t}$.  
		Now, by Shrinking and Condensation,
		$$\<\abar_{<s},(\abar_s\bbar_s),\bbar_t,\bbar_{>t}\>\ \hbox{is an $M$-f.s.\ sequence of length 4 over $D$.}$$
		As $\tp(\bbar_s/D\abar_s)$ is finitely satisfied in $D$ and $D$ is full, by Lemma~\ref{1.6} $\<\abar_{<s},\abar_s,\bbar_s\>$ is an $M$-f.s.\ sequence over $D$, and so by Lemma~\ref{blow},
	 $$\<\abar_{<s},\abar_s, \bbar_s,\bbar_t,\bbar_{>t}\>\ \hbox{is an $M$-f.s.\ sequence of length 5 over $D$.}$$
	
		As this sequence is an  order-congruence, it follows that $\tp(\bbar_s/D\abar_{<s}\abar_s\bbar_{>t})=\tp(\bbar_t/D\abar_{<s}\abar_s\bbar_{>t})$,
		so we can choose $c^*$ such that 
		$$\bbar_s c_s\equiv_{D\abar_{<s}\abar_s\bbar_{>t}} \bbar_t c^*$$
		Thus, since  $\tp(\bbar_s/D\abar_s c_s)$ is not finitely satisfied in $M$, neither is $\tp(\bbar_t/D\abar_s c^*)$.  
		By contrast, for $j,j'>t$, as both $\tp(\bbar_j/D\abar_s c_s)$ and $\tp(\bbar_{j'}/D\abar_s c_s)$ are finitely satisfied in $M$ and are equal by
		Proposition~\ref{seqcongruence}, the same is true of $\tp(\bbar_j/D\abar_s c^*)=\tp(\bbar_{j'}/D\abar_s c^*)$.
		Dually, since $\tp(\bbar_s c_s/D\abar_s)$ is not finitely satisfied in $M$, neither is $\tp(\bbar_s c^*/D\abar_s)$; and 
		because $\tp(\bbar_s c_s/D\abar_i)$ is finitely satisfied in $M$ for every $i<s$, we also conclude $\tp(\bbar_s c^* \abar_i/D)=\tp(\bbar_s c^* \abar_{i'}/D)$ for all $i,i'<s$ by 
		Proposition~\ref{seqcongruence}.  
		
		Finally,  choose $\rho_1(\xbar,\ybar,z),\rho_2(\xbar,\ybar,z)\in\tp(\abar_s,\bbar_t, c^*/D)$ such that neither $\rho_1(\abar_s,\ybar,c^*)$ nor $\rho_2(\abar_s,\ybar,z)$ has realizations in $M$.  Then, letting $\dbar_i:=\abar_i\bbar_i$ for each $i\in\Q$, $\I=\<\dbar_i:i\in \Q\>$ is a pre-coding configuration with respect to $\rho_1\wedge\rho_2$ and $c^*$.
	\end{proof}
	
	\begin{remark} \label{r:tidy}  {\em 
			The following observation will be useful in Section \ref{s:fin}. A {\em tidy} pre-coding configuration $\II = \<\dbar_i : i \in \Q\>, \set{c_{s,t} : s < t \in \Q}, \phi(\xbar, \ybar, z)$ is one where $\C \models \neg \phi(\dbar_i, \dbar_j, e)$ for every $i < j$ and $e \in \bigcup \II$. The pre-coding configuration constructed in $(4) \Ra (1)$ is tidy, since, choosing $M$ so $\II$ is an $M$-f.s. sequence,  $\C \models \phi(\dbar_i, \dbar_j, e)$ implies $\tp(\dbar_j/\dbar_i e)$ and $\tp(\dbar_je/\dbar_i)$ are not finitely satisfiable in $M$. But if $e \in \dbar_k$ for $k \leq i$ then the former type is finitely satisfiable in $M$, and if $e \in \dbar_k$ for $k > i$, then the latter type is.
			
			The tidiness property extends to the \joined \ constructed in $(3) \Ra (4)$ and so ultimately to the tuple-coding configuration as well. That is, from a failure of the f.s. dichotomy, we construct a tuple-coding configuration $\II, \JJ, C, \phi$ with $\C \models \neg \phi(\abar_i, \bbar_j, e)$ for every $\abar_i \in \II, \bbar_j \in \JJ$, and $e \in \bigcup \II \cup \bigcup \JJ$.
		}
	\end{remark}

	\section{The main theorem}
	
	We recall the main theorem from the introduction. Note that whereas Clauses (1) and (2) discuss monadic expansions of $T$, Clauses (3)-(6) are all statements about $T$ itself.
	
		\textcolor{red}{In Theorem \ref{together}, ``indiscernible-triviality'' has been replaced with ``endless indiscernible triviality''.}
	
	\begin{theorem}  \label{together}  The following are equivalent for a complete theory $T$ with an infinite model.
		\begin{enumerate}
			\item  $T$ is monadically NIP.
			\item  No monadic expansion of $T$ admits coding.
			\item  $T$ does not admit coding on tuples.
			\item  $T$ has the f.s.\ dichotomy.
			\item  For all $M^* \models T$ and $M, N \preceq M^*$, every partial $M$-f.s.\ decomposition of $N$ extends to an (irreducible) $M$-f.s.\ decomposition of $N$.
			\item  $T$ is dp-minimal and has \textcolor{red}{endless} indiscernible triviality.
		\end{enumerate}
	\end{theorem}
	
	The equivalences of $(3)$-$(6)$ are by Proposition~\ref{decomp} and Proposition~\ref{preservingindisc}.
	We note that $(1)\Rightarrow(2)$ is easy:
	Choose a monadic expansion $\C^*$ that admits coding, say via an $L$-formula $\phi(x,y,z)$ defining a bijection from 
	the countable sets $A\times B\rightarrow C$. By adding a new unary predicate for a suitable $C_0 \subseteq C$, the formula $\psi(x,y):=\exists z \in C_0 \phi(x,y,z)$ can define the edge relation of an arbitrary bipartite graph on $A \times B$, and in particular of the generic bipartite graph. Thus, $T$ is not monadically NIP.

	Thus, it remains to prove that $(4)\Rightarrow(1)$ and $(2)\Rightarrow(3)$, which are proved in the next two subsections.
	
	\subsection{If $T$ has the f.s.\ dichotomy, then $T$ is monadically NIP}
	
	The type-counting argument in this section is somewhat similar to that in \cite{Blum}, showing that monadic NIP corresponds to the dichotomy of unbounded partition width versus partition width at most $\beth_2(\az)$. Both arguments use the tools from Sections 2 and 3 to decompose the model and count the types realized in a part of the decomposition over its complement. However, while Blumensath decomposes the model into a large binary tree, our decomposition takes a single step.

	\begin{definition}  Suppose $\I=\set{\abar_i:i\in I}$ is any sequence of pairwise disjoint tuples and suppose $N\supseteq \bigcup\I$ is any model.
		An {\em $\I$-partition of $N$} is any partition $N=\bigsqcup \set{A_i:i\in I}$ with $\abar_i\subseteq A_i$ for each $i\in I$.
	\end{definition}
	
	\begin{definition}
		For any $N\preceq\C$ and $A\subseteq N$, let {\em $\rtp(N,A)$} denote the number of complete types over $A$ {\em realized} by tuples in $(N\setminus A)^{<\omega}$.
	\end{definition}
	We will be primarily interested in the case where $A$ is very large, and $\rtp(N,A)$ is significantly smaller than $|A|$. The following lemma is similar to Lemma \ref{fewtypes}, removing the requirement that the partition is convex but adding a finiteness condition. 
	
	For the rest of this section, recall the notation $A_J = \bigcup_{j \in J} A_j$ from the first part of Definition \ref{AJ}.
	
	\begin{lemma}  \label{fshalf}  If $T$ has the f.s.\ dichotomy, then for every well-ordering $(I,\le)$ with a maximum element, for every indiscernible sequence $\I=\set{\abar_i:i\in I}$ of pairwise disjoint tuples and every $N\supseteq\bigcup\I$,
		there is an $\I$-partition $\set{A_i:i\in I}$ of $N$ such that $\rtp(N,A_J)\le \beth_2(|T|)$  for every finite $J\subseteq I$.
	\end{lemma}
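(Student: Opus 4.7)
The plan is to combine the extension Lemma~\ref{secondpoint} with the order-congruence (Proposition~\ref{seqcongruence}) and type-counting (Lemma~\ref{fewtypes}) machinery from Section~2. First, apply Lemma~\ref{strongexistence} to $\I$ to produce a model $M$ with $|M|=|T|$ and a full $C\supseteq M$ with $|C|\le 2^{|T|}$ such that $\I$ is both indiscernible over $C$ and an $M$-f.s.\ sequence over $C$. (If $I$ is finite, first extend $\I$ to an infinite indiscernible sequence by compactness and restrict back after choosing $M,C$.) Viewing $\I$ as a partial $M$-f.s.\ decomposition of $N$ over $C$ with pairwise disjoint parts, invoke Lemma~\ref{secondpoint}---using the ``Furthermore'' and pairwise-disjoint clauses, since $(I,\le)$ is a well-ordering with a maximum---to extend it to an $M$-f.s.\ decomposition $\<A_i:i\in I\>$ of $N$ over $C$ with pairwise disjoint $A_i$ satisfying $\abar_i\subseteq A_i$. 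This is the required $\I$-partition.

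To bound $\rtp(N,A_J)$ for fixed finite $J=\{j_1<\cdots<j_n\}\subseteq I$, it suffices to bound, for each $k<\omega$, the number of $k$-types over $CA_J$ realized by tuples in $N\setminus A_J$, since types over $A_J$ are coarser and summing $\aleph_0$ bounds of $\beth_2(|T|)$ remains $\beth_2(|T|)$. Any $k$-tuple $\bbar \subseteq N\setminus A_J \subseteq A_{I\setminus J}$ partitions uniquely as $\bbar = \cbar_{i_1'}\cdots\cbar_{i_m'}$ with $i_1'<\cdots<i_m'$ in $I\setminus J$ and $\cbar_{i_l'}\subseteq A_{i_l'}$. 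Fix enumerations $\dbar_{j_r}$ of each $A_{j_r}$ and interleave these with $\bbar$'s pieces by index. By Proposition~\ref{seqcongruence}, $\<A_i:i\in I\>$ is an order-congruence over $C$; applying it to this interleaved sequence, with $i^*$ the minimum of all indices involved, shows that $\tp(\bbar/CA_J)$ is determined by the \emph{combinatorial pattern}---piece sizes $|\cbar_{i_l'}|$ together with the distribution of the $i_l'$'s among the $n+1$ intervals of $I\setminus J$ cut out by $J$---and the individual types $\tp(\cbar_{i_l'}/C)$. Only finitely many combinatorial patterns are possible for fixed $k$ and $n$, and by Lemma~\ref{fewtypes} each individual type ranges over at most $\beth_2(|M|)=\beth_2(|T|)$ values, giving the bound.

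The main technical point is arranging Proposition~\ref{seqcongruence}'s hypotheses when comparing two candidates $\bbar,\bbar'$ with the same combinatorial pattern: their pieces must be matched so the interleaved sequences $(\bbar, A_J)$ and $(\bbar', A_J)$ have equal length, identical partition into parts (one per distinct index), matching individual types over $C$, and a common lower bound $i^*$ (taken as the minimum of all involved indices), yielding $\tp(\bbar A_J/CA_{<i^*})=\tp(\bbar' A_J/CA_{<i^*})$ and hence $\tp(\bbar/CA_J)=\tp(\bbar'/CA_J)$. Once this bookkeeping is in place, the conclusion is an immediate assembly of results already established in Sections 2 and 3.
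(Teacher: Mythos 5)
Your proof is correct and follows essentially the same route as the paper's: apply Lemma~\ref{strongexistence} to obtain $M$ of size $|T|$ and a full $C \supseteq M$ with $|C|\le 2^{|T|}$ over which $\I$ is an $M$-f.s.\ sequence, extend via Lemma~\ref{secondpoint} (using the well-ordering-with-maximum and pairwise-disjointness clauses), and then use the order-congruence Proposition~\ref{seqcongruence} to reduce the count to individual types over $C$ times finitely many order patterns. The only minor difference is that the paper bounds the individual types by simply noting $|S(C)|\le\beth_2(|T|)$ from $|C|\le 2^{|T|}$ rather than quoting Lemma~\ref{fewtypes}, but the outcome is identical.
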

	
	\begin{proof}  By Lemma \ref{strongexistence}, choose a model $M$ of size $|T|$ 
		and a full $C\supseteq M$ with $|C|\le 2^{|T|}$ for which $\<\abar_i:i\in I\>/C$ is an $M$-f.s.\ sequence over $C$. (Note that $N$ might not contain $M$.) By Lemma~\ref{secondpoint} choose a simple extension $\<A_i:i\in I\>$ of $\<\abar_i:i\in I\>$ with $\bigsqcup\set{A_i:i\in I} = N$.  Thus, 
		$\set{A_i:i\in I}$ is an $\I$-partition of $N$. 
		For a given finite $J \subseteq I$ and $\nbar \in N \bs A_J$, let $\nbar \subset A_{i_1} \cup \dots \cup A_{i_k}$ and let $\nbar_{i_j} = \nbar \cap A_{i_j}$.  As $\<A_i:i\in I\>/C$ is an order-congruence over $C$ by Lemma~\ref{seqcongruence}, $\tp(\nbar/CA_J)$ is determined by $\set{\tp(\nbar_{i_j}/C) : 1 \leq j \leq k}$ and the order type of the finite set $\set{i_1, \dots, i_k} \cup J$. There are only finitely many such order types, and as $|C|\le 2^{|T|}$, there are at most $\beth_2(|T|)$ complete types over $C$. So $\rtp(N,A_J)\le \beth_2(|T|)$ for every finite $J\subseteq I$.
	\end{proof}
	
	On the other hand, if a theory $T$ has the independence property, then no  uniform  bound can exist.
	
	\begin{lemma} \label{IP} Suppose that $T$ has IP, as witnessed by $\phi(\xbar,y)$ with $\lg(\xbar)=n$ and $\lg(y)=1$.
		For every $\lambda>2^{|T|}$ there is an order-indiscernible $\I=(a_i:i\le\lambda)$, and a  model $N\supseteq \I$ such that for every 
		$\I$-partition $\<A_i:i\le\lambda\>$ of $N$,  $\rtp(N,A_{I_0})\ge \lambda$ for some finite $I_0\subseteq (\lambda+1)$.
	\end{lemma}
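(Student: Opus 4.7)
The plan is to construct, using IP, a configuration in which every $\I$-partition is forced to produce $\lambda$ distinct types over a finite-indexed piece. Starting from an infinite IP-witness of $\phi(\bar{x},y)$, a standard compactness and Ramsey argument builds inside $\C$ an order-indiscernible sequence $\I = (a_i : i \leq \lambda)$ of singletons together with, for each $S \subseteq \lambda+1$, an $n$-tuple $\bar{c}_S$ satisfying $\phi(\bar{c}_S, a_i)$ iff $i \in S$; take $N \preceq \C$ to be an elementary submodel containing $\I$ and all these $\bar{c}_S$.

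Now fix an arbitrary $\I$-partition $\langle A_i : i \leq \lambda\rangle$ of $N$, and for each $S$ define the footprint $E_S := \{i \leq \lambda : \bar{c}_S \cap A_i \neq \emptyset\} \in [\lambda+1]^{\leq n}$. Since $|[\lambda+1]^{\leq n}| = \lambda$ while there are $2^\lambda$ subsets $S$, pigeonhole produces an $E^*$ of size at most $n$ with $\SS := \{S : E_S = E^*\}$ of cardinality $2^\lambda$. Set $I_0 := E^*$, which is finite; then $\bar{c}_S \subseteq A_{I_0}$ for every $S \in \SS$, so each formula $\phi(\bar{c}_S, y)$ with $S \in \SS$ has parameters in $A_{I_0}$.

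The conclusion $\rtp(N, A_{I_0}) \geq \lambda$ is then witnessed by the singletons $a_j$ for $j \in \lambda+1 \setminus I_0$: two such elements have distinct types over $A_{I_0}$ exactly when some $S \in \SS$ contains one of $j, j'$ but not the other. Let $\sim$ denote the equivalence on $\lambda+1 \setminus I_0$ defined by ``no $S \in \SS$ separates $j, j'$'', and let $\kappa$ be its number of classes. Since each $S \in \SS$ is determined by $S \cap I_0$ together with a union of $\sim$-classes outside $I_0$, we have $2^\lambda = |\SS| \leq 2^{|I_0|} \cdot 2^\kappa = 2^\kappa$. The main obstacle is upgrading this to $\kappa \geq \lambda$, which is not automatic in arbitrary cardinal arithmetic; the plan to handle this is a preparatory Erd\H{o}s--Rado step on the tuples $\bar{c}_S$ (performed before extracting $\I$), refining $\I$ to an indiscernible subsequence along which the chosen $\bar{c}_S$'s form a canonical array, so that the trace of $\SS$ on $\lambda+1 \setminus I_0$ is generic enough to force $\sim$ to collapse to the identity on $\lambda$-many representatives. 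Selecting those $\lambda$ representatives yields $\lambda$ pairwise distinct types $\tp(a_j/A_{I_0})$ realized by elements of $N \setminus A_{I_0}$, as required.
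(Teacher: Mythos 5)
Your argument is essentially the one in the paper: extract a shattered indiscernible sequence with witnesses $\bar c_S$, pigeonhole the footprints of the $\bar c_S$ into a finite $I_0$ with $2^\lambda$ survivors all landing in $A_{I_0}$, and then use that each surviving trace $S\setminus I_0$ is constant on $A_{I_0}$-type-classes of $\{a_j: j\notin I_0\}$ to force the type count to be large. (One small slip: you write that two indices $j,j'$ get different types over $A_{I_0}$ ``exactly when'' some $S\in\SS$ separates them; only the ``if'' direction is true, but that is the direction you need, and it gives $\rtp(N,A_{I_0})\ge\kappa$.) So the skeleton is the same as the paper's, with the cosmetic difference that you absorb a $2^{|I_0|}$ factor where the paper does a second pigeonhole to fix $\tp(\bar b/\{a_i:i\in I_0\})$.

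Where you stop, however, is a genuine gap that you should not paper over. You correctly observe that the argument only delivers $2^\kappa\ge 2^\lambda$ and that this does not yield $\kappa\ge\lambda$ in ZFC (the strict monotonicity of $\kappa\mapsto 2^\kappa$ can fail). Your proposed repair via an Erd\H{o}s--Rado/canonization step on the $\bar c_S$ is too vague to evaluate and does not obviously make sense: the relevant finite set $I_0$ is extracted \emph{after} the $\I$-partition is handed to you, so one cannot pre-arrange the $\bar c_S$ to behave generically over an $A_{I_0}$ that is not yet determined, and the index set $\P(\lambda)$ has no linear order along which to canonize. The cleanest resolution is not to try to upgrade to $\kappa\ge\lambda$ at all, but to observe that the lemma is only ever invoked to contradict the bound $\rtp(N^+,A_{I_0})\le\beth_{\omega+1}(|T|)$ coming from Lemmas~\ref{fshalf} and~\ref{transfercount}. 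The inequality $2^{\rtp(N,A_{I_0})}\ge 2^\lambda$, which your argument (and the paper's) honestly establishes, already yields that contradiction once $\lambda$ is taken large enough that $2^\lambda>\beth_{\omega+2}(|T|)$ (for instance $\lambda=\beth_{\omega+2}(|T|)$, which certainly satisfies $\lambda>2^{|T|}$). So the right fix is to weaken the conclusion of the lemma to $2^{\rtp(N,A_{I_0})}\ge 2^\lambda$ and adjust the choice of $\lambda$ in the downstream proposition, not to attempt an extra combinatorial extraction.
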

	\begin{proof}
		In the monster model, choose an order-indiscernible $\I=(a_i:i\le\lambda)$ that is shattered, i.e., there is a set $Y=\set{\bbar_s:s\in\P(\lambda)}$ 
		such that $\phi(\bbar_s,a_i)$ holds if and only if $i\in s$.  
		Note that for distinct $\bbar,\bbar'\in Y$, there is some $a_i \in I$ such that $\tp_\phi(\bbar/a_i)\neq\tp_\phi(\bbar'/a_i)$.
		Let $N$ be any model containing $\I\cup \bigcup Y$ and
		let $\<A_i:i\le\lambda\>$ be any $\I$-partition of $N$.  As $|\I|=\lambda$, while $|Y|=2^\lambda$, by applying the pigeon-hole principle $n$ times
		(one for each coordinate of $\bbar$) one obtains $Y'\subseteq Y$, also of size $2^\lambda$, and a finite $I_0\subseteq I$ such
		that $\bbar\in (A_{I_0})^n$ for each $\bbar\in Y'$.  As $\lambda>2^{|T|}$ and there are at most $2^{|T|}$ types over $I_0$, we can find
		$Y^*\subseteq Y'$ of size $2^\lambda$ such that $\tp(\bbar/I_0)$ is constant among $\bbar\in Y^*$.  
		It follows that $\tp(\bbar/(I-I_0))\neq\tp(\bbar'/(I-I_0))$ for distinct $\bbar,\bbar'\in Y^*$.  
		As $Y^*\subseteq (A_{I_0})^n$, it follows that $\rtp(N,A_{I_0})\ge \lambda$.
	\end{proof}
	
	To show that the behaviors of Lemma~\ref{fshalf} and Lemma~\ref{IP} cannot co-exist, we get an upper bound on the number of types realized in a finite monadic expansion. Such a bound is easy for quantifier-free types, and the next lemma inductively steps it up to a bound on all types.
	The following two lemmas make no assumptions about $T$.

	For each $k\in\omega$, define an equivalence relation $\sim_k$ on $(N\setminus A)^{<\omega}$ by:  $\abar\sim_k\bbar$  if and only if $\lg(\abar)=\lg(\bbar)$ and
	$\tp_\phi(\abar/A)=\tp_\phi(\bbar/A)$ for every formula $\phi(\zbar)$ of quantifier depth at most $k$.  
	Clearly, $\tp(\abar/A)=\tp(\bbar)$ if and only if $\abar\sim_k \bbar$ for every $k$.    To get an upper bound on $\rtp(N,A)$, for each $k\in\omega$, let
	$r_k(N,A)=|(N\setminus A)^{<\omega}/\sim_k|$.  
	
	\begin{lemma} \label{rtp} For any $N\preceq \C$, $A\subseteq N$, and $k\in \omega$,
		$r_{k+1}(N,A)\le 2^{r_k(N,A)}$.  Thus, $\rtp(N,A)\le \beth_{\omega+1}(r_0(N,A))$.
	\end{lemma}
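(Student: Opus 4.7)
The first bound $r_{k+1}(N,A)\le 2^{r_k(N,A)}$ is an Ehrenfeucht--\fraisse style calculation. For each $\abar\in (N\setminus A)^{<\omega}$, my plan is to attach the invariant
$$\Psi_\abar := \{[\abar\ebar]_{\sim_k} : \ebar \in (N\setminus A)^{<\omega}\} \subseteq (N\setminus A)^{<\omega}/\sim_k.$$
Since there are at most $2^{r_k(N,A)}$ such subsets, the bound reduces to showing that $\Psi_\abar$ determines $[\abar]_{\sim_{k+1}}$; note that taking $\ebar=\emptyset$ already recovers $[\abar]_{\sim_k}$, and hence $\lg(\abar)$, from $\Psi_\abar$.

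For the key step, any formula of quantifier depth $\le k+1$ is a Boolean combination of formulas of depth $\le k$ and formulas of the form $\exists y\,\theta(y,\zbar,\wbar)$ with $\theta$ of depth $\le k$, so it suffices to check that whether $\C\models\exists y\,\theta(y,\abar,\cbar)$, for $\cbar$ a tuple from $A$, is determined by $\Psi_\abar$. Using $N\preceq\C$, existence of a witness $y\in\C$ is equivalent to existence of a witness $d\in N$, which I split according to whether $d\in A$ or $d\in N\setminus A$. If some $d\in A$ satisfies $\theta(d,\abar,\cbar)$, the corresponding depth-$\le k$ formula $\theta(d,\zbar,\cbar)$ with parameters in $A$ lies in $\tp(\abar/A)$, which is determined by $[\abar]_{\sim_k}$. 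If some $d\in N\setminus A$ satisfies $\theta(d,\abar,\cbar)$, then (with $n=\lg(\abar)$) the class $[\abar d]_{\sim_k}$ lies in $\Psi_\abar$ and contains representatives for which the depth-$\le k$ formula $\theta(z_{n+1},z_1,\dots,z_n,\cbar)$ is in the common $\sim_k$-type; conversely, any such class in $\Psi_\abar$ produces such a $d$. Both conditions are thus determined by $\Psi_\abar$, so $\Psi_\abar=\Psi_\bbar$ forces $\abar\sim_{k+1}\bbar$.

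For the ``Thus'' statement, iterating gives $r_k(N,A)\le\beth_k(r_0(N,A))$ for every $k$, and each complete type $\tp(\abar/A)$ realized in $(N\setminus A)^{<\omega}$ is determined by the sequence $([\abar]_{\sim_k})_{k\in\omega}$ of its $\sim_k$-restrictions, so
$$\rtp(N,A)\le\prod_{k\in\omega}r_k(N,A)\le \beth_\omega(r_0(N,A))^\omega\le \beth_{\omega+1}(r_0(N,A)),$$
using $\beth_\omega(\kappa)^\omega\le 2^{\beth_\omega(\kappa)}=\beth_{\omega+1}(\kappa)$ for $\kappa\ge 1$. The main point requiring care is the case split on $d\in A$ versus $d\in N\setminus A$: this is exactly where the restriction of $\Psi_\abar$ to tuples in $N\setminus A$ is used, so that the depth-$(k+1)$ existential is resolved using only information already counted in $r_k(N,A)$.
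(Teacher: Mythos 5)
Correct, and essentially the same Ehrenfeucht--\fraisse style induction on quantifier depth that the paper uses, including the same case split on whether an existential witness lies in $A$ or in $N\setminus A$ (with $N\preceq\C$ ensuring a witness can be found in $N$). The only cosmetic difference is that the paper factors the argument through an auxiliary back-and-forth equivalence $E_k$ on single-element extensions and then verifies $E_k$ coincides with $\sim_k$, whereas you attach the invariant $\Psi_\abar$ directly and show it determines the $\sim_{k+1}$-class.
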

	
	\begin{proof}  The second sentence follows from the first as $\tp(\abar/A)=\tp(\bbar/A)$ if and only if $\abar\sim_k \bbar$ for every $k$.
		For the first sentence, we give  an alternate formulation of $\sim_k$ to make counting easier.  
		For each $k\in\omega$, let  $E_k$ be the equivalence relation on $(N\setminus A)^{<\omega}$ given by:
		\begin{itemize}
			\item  $E_0(\abar,\bbar)$ if and only if $\lg(\abar)=\lg(\bbar)$ and $\qftp(\abar/A)=\qftp(\bbar/A)$; and
			\item   $E_{k+1}(\abar,\bbar)$ if and only if $E_k(\abar,\bbar)$ and, for every $c\in (N\setminus A)$, there is $d\in (N\setminus A)$ such that $E_k(\abar c,\bbar d)$, and vice-versa,
		\end{itemize}
		
		For each $k$, let $c(k):=|(N\setminus A)^{<\omega}/E_k|$.  It is clear that $c(0)=r_0(N,A)$ and by the definition of $E_{k+1}$ we have $c(k+1)\le 2^{c(k)}$ for each $k$, so the lemma follows from the fact that 
		$E_k(\abar,\bbar)$ if and only if $\abar\sim_k \bbar$, whose verification amounts to proving the following claim.
		
		\begin{claim*}  If the quantifier depth of $\phi(\zbar)$ is at most $k$, then for all partitions $\zbar=\xbar\ybar$, for all $\ebar\in A^{\lg(\ybar)}$, and for all $\abar,\bbar\in (N\setminus A)^{\lg(\xbar)}$, if $E_k(\abar,\bbar)$, then $N\models\phi(\abar,\ebar)\leftrightarrow\phi(\bbar,\ebar)$.
		\end{claim*}
		
		\begin{claimproof}  By induction on $k$.  Say $\psi(\zbar):=\exists w \phi(w,\zbar)$ is chosen with the quantifier depth of $\phi$ is at most $k$.  Fix a partition $\zbar=\xbar\ybar$ and choose $\ebar\in A^{\lg(\ybar)}$, $\abar,\bbar\in (N\setminus A)^{\lg(\xbar)}$ with $E_{k+1}(\abar,\bbar)$.  Assume $N\models\exists w\phi(w,\abar,\ebar)$.  There are two cases.
			If there is some $h\in A$ such that $N\models\phi(h,\abar,\ebar)$, then $N\models\phi(h,\bbar,\ebar)$ by the inductive hypothesis.  On the other hand, if there is $c\in (N\setminus A)$
			such that $N\models\phi(c,\abar,\ebar)$, use $E_{k+1}(\abar,\bbar)$ to find $d\in (N\setminus A)$ such that $E_k(\abar c, \bbar d)$.  Thus, the inductive hypothesis implies
			$N\models\phi(d,\bbar,\ebar)$, so again $N\models\psi(\bbar,\ebar)$.
		\end{claimproof}
	\end{proof}
	
	The following transfer result is the point of the previous lemma.  Again, it will be used when $\rtp(N^+,A)$ is significantly smaller than $|A|$.
	
	\begin{lemma}  \label{transfercount}  Let $N\supseteq A$ be any model  and let $N^+=(N,U_1,\dots,U_k)$ be any expansion of $N$ by finitely many unary predicates.
		Then $\rtp(N^+,A)\le\beth_{\omega+1}(\rtp(N,A))$.
	\end{lemma}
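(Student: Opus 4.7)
The plan is to apply Lemma~\ref{rtp} in the expanded language $L^+ = L \cup \{U_1,\ldots,U_k\}$. Since $N^+$ is itself a structure in $L^+$, that lemma yields $\rtp(N^+,A) \le \beth_{\omega+1}(r_0(N^+,A))$, so the task reduces to bounding $r_0(N^+,A)$ in terms of $\rtp(N,A)$ up to a factor that the $\beth_{\omega+1}$ operator will absorb.

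The key observation I would use is that quantifier-free $L^+$-types over $A$ carry only slightly more information than quantifier-free $L$-types. Specifically, the qf $L^+$-type of a length-$n$ tuple $\abar$ over $A$ is determined by (a) the qf $L$-type of $\abar$ over $A$, together with (b) the truth values of $U_i(a_j)$ for $1 \le i \le k$ and $1 \le j \le n$ --- the values of $U_i$ on parameters from $A$ being already encoded in $A$ itself. The first datum is dominated by the complete $L$-type of $\abar$, so the number of qf $L$-types realized by length-$n$ tuples in $(N \setminus A)^n$ is at most the analogous count of complete $L$-types; and the second contributes at most $2^{kn}$ possibilities. Summing over $n$ therefore gives
$$r_0(N^+,A) \;\le\; \sum_{n<\omega} \rtp_n(N,A) \cdot 2^{kn} \;\le\; \aleph_0 \cdot \rtp(N,A),$$
where $\rtp_n(N,A)$ denotes the number of complete $L$-types realized by tuples in $(N\setminus A)^n$.

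Substituting into Lemma~\ref{rtp} will then yield $\rtp(N^+,A) \le \beth_{\omega+1}(\aleph_0 \cdot \rtp(N,A)) = \beth_{\omega+1}(\rtp(N,A))$, as desired. There is no real obstacle here; the heavy lifting has already been done in Lemma~\ref{rtp}, and the present lemma is essentially a bookkeeping verification that finitely many unary predicates can only inflate the count of realized \emph{quantifier-free} types by a negligible factor per tuple length, which the $\beth_{\omega+1}$ operator swallows when we pass back from quantifier-free to full types.
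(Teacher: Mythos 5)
Your proposal is correct and follows essentially the same route as the paper: bound the count of quantifier-free types in the expansion by observing that the $k$ unary predicates contribute at most a finite factor per tuple length, so $r_0(N^+,A)$ has the same cardinality as $r_0(N,A) \le \rtp(N,A)$, and then invoke Lemma~\ref{rtp}. You spell out the summation over tuple lengths and the final cardinal arithmetic $\beth_{\omega+1}(\aleph_0 \cdot \rtp(N,A)) = \beth_{\omega+1}(\rtp(N,A))$ a bit more explicitly than the paper does, but the argument is the same.
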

	\begin{proof}
		For each $n$, expanding by $k$ unary predicates can increase the number of quantifier-free $n$-types by at most a finite factor, i.e. $2^k$, so $r_0(N^+, A)= r_0(N, A) \leq \rtp(N, A)$. The result now follows from Lemma \ref{rtp}.
	\end{proof}

	Finally, we combine the lemmas above to  obtain the goal of this subsection.
	
	\begin{proposition}  If $T$ has the f.s.  dichotomy, then $T$ is monadically NIP.
	\end{proposition}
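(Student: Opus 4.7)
The plan is to argue by contradiction, using Lemmas \ref{fshalf}, \ref{IP}, and \ref{transfercount} in concert. Suppose $T$ is not monadically NIP. Then some expansion $T^+$ of $T$ by finitely many unary predicates has the independence property, witnessed by some $L^+$-formula $\phi^+(\xbar,y)$ with $\lg(y)=1$. Fix a cardinal $\lambda$ to be specified at the very end, satisfying at least $\lambda > 2^{|T|}$ (so $\lambda > 2^{|T^+|}$ as well).

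Applying Lemma~\ref{IP} in the theory $T^+$ yields an order-indiscernible sequence $\I = (a_i : i \le \lambda)$ of singletons and a model $N^+\supseteq \I$ such that for every $\I$-partition $\<A_i : i \le \lambda\>$ of $N^+$, there is a finite $I_0 \subseteq \lambda+1$ with $\rtp(N^+,A_{I_0}) \ge \lambda$. Since indiscernibility in $T^+$ implies indiscernibility in $T$, the sequence $\I$ is also order-indiscernible with respect to the $L$-reduct $N$ of $N^+$. The index set $\lambda+1$ is a well-ordering with maximum element $\lambda$, and singletons are pairwise disjoint, so Lemma~\ref{fshalf} applies to give an $\I$-partition $\<A_i : i \le \lambda\>$ of $N$ with $\rtp(N,A_J) \le \beth_2(|T|)$ for every finite $J \subseteq \lambda+1$.

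This same partition is an $\I$-partition of $N^+$ (the underlying sets are equal). Applying Lemma~\ref{transfercount} to the expansion $N^+$ of $N$ yields, for every finite $J \subseteq \lambda+1$,
\[
\rtp(N^+, A_J) \le \beth_{\omega+1}(\rtp(N,A_J)) \le \beth_{\omega+1}(\beth_2(|T|)).
\]
Now choose $\lambda$ to also exceed $\beth_{\omega+1}(\beth_2(|T|))$. Then Lemma~\ref{IP} furnishes a finite $I_0$ with $\rtp(N^+,A_{I_0}) \ge \lambda > \beth_{\omega+1}(\beth_2(|T|))$, contradicting the inequality above.

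There is no substantive obstacle beyond bookkeeping: the three lemmas were designed precisely to combine this way. The one small point to be careful about is the compatibility of the ``two worlds'' $T$ and $T^+$ — specifically, that the indiscernible sequence produced in $T^+$ remains indiscernible in $T$ (which is immediate from the fact that $T^+$ is a definitional expansion of $T$ by additional symbols) and that the $\I$-partition of $N$ produced by Lemma~\ref{fshalf} can legitimately be read as an $\I$-partition of $N^+$. Once these identifications are made, choosing $\lambda$ large closes the argument.
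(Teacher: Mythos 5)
Your argument is correct and follows exactly the same route as the paper: contradiction via Lemmas \ref{IP}, \ref{fshalf}, and \ref{transfercount}, with the reduct/expansion bookkeeping handled the same way. (Your cardinal bound $\beth_{\omega+1}(\beth_2(|T|))$ actually equals the paper's $\beth_{\omega+1}(|T|)$, so even the quantitative threshold agrees.)
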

	
	\begin{proof}  By way of contradiction assume that  $T$ is not monadically NIP, but has the f.s.  dichotomy. 
		Let $T^+$ be an expansion by finitely many unary predicates that has IP.
		Choose a  cardinal $\lambda > \beth_{\omega+1}(|T|)$.
		Let $N^+ \models T^+$ with $N^+ \supseteq \I=(a_i:i\le\lambda)$ as in Lemma \ref{IP},  so for any $\I$-partition of $N^+$ there is $I_0 \subseteq (\lambda+1)$ with $\rtp(N^+,A_{I_0}) > \beth_{\omega+1}(|T|)$.
		
		Let $N$ be the $L$-reduct of $N^+$.  As $\I$ remains $L$-order-indiscernible, and $T$ has the f.s.  dichotomy, choose an
		$\I$-partition
		$\<A_i:I\le\lambda\>$ of $N$ as in Lemma~\ref{fshalf}, so $\rtp(N,A_{J})\le \beth_2(|T|)$ for every $J \subseteq (\lambda+1)$. Since $N^+$ is a unary expansion of $N$,
		$\rtp(N^+,A_J)\le \beth_{\omega+1}(|T|)$ for every $J \subseteq (\lambda+1)$, by Lemma~\ref{transfercount}.
		This this contradicts our ability to find an $I_0 \subseteq (\lambda+1)$ from the previous paragraph for the chosen $\I$-partition of $N^+$.
	\end{proof}
	
	Lemma \ref{fewtypes} and the arguments in this subsection seem to indicate that, for a generalization of the structural graph-theoretic notion of neighborhood-width \cite{Gur} similar to Blumensath's generalization of clique-width \cite{Blum}, monadic NIP should correspond to a dichotomy between bounded and unbounded neighborhood-width.
	
	\subsection{From coding on tuples to coding on singletons}
	
	This subsection provides the final step,  $(2)\Rightarrow(3)$, in proving Theorem~\ref{together} by showing that if $T$ admits coding on tuples, then some monadic expansion admits coding (i.e., on singletons).  For the result of this subsection, since $T$ admitting coding on tuples immediately implies $T$ is not monadically NIP, we could finish by \cite{BS}*{Theorem 8.1.8}, which states that if $T$ has IP then this is witnessed on singletons in a unary expansion. But the number of unary predicates used would depend on the length of the tuples in the tuple-coding configuration, which would weaken the results of Section \ref{s:fin}.
	
	Deriving non-structure results in a universal theory from the existence of a bad configuration is made much more involved if the configuration can occur on tuples. If one is willing to add unary predicates, arguments such as that from \cite{BS} mentioned above will often bring the configuration down to singletons. A general result in this case is \cite{Blum}*{Theorem 4.6} that (under mild assumptions) there is a formula defining the tuples of an indiscernible sequence in the expansion adding a unary predicate for each ``coordinate strip'' of the sequence. The results of \cite{Sim21} indicate the configuration can often be brought down to singletons just by adding parameters, instead of unary predicates, but these arguments seem difficult to adapt to tuple-coding configurations. Another approach, which we use here, is to take an instance of the configuration where the tuples have minimal length, and argue that the tuples then in many ways behave like singletons. 
	
	\begin{definition}  \label{standard}
		Given a tuple-coding configuration $\II = \<\abar_i : i \in I\>, \JJ = \<\bbar_j: j \in J\>, \set{c_{i,j}| i \in I, j \in J}, \phi(\xbar, \ybar, z)$, indexed by disjoint countable dense orderings $(I,\le),(J\le)$,
		an order-preserving permutation of $(I,\le)$ (resp. $(J,\le)$) naturally gives rise to permutation of $A=\bigcup\II$ (resp. $B=\bigcup\JJ$; call such permutations of $A$ and $B$  {\em standard permutations}.
		
		A tuple-coding configuration as above is {\em regular} if  $\C \models \phi(\dbar, \ebar, c_{i,j}) \leftrightarrow \phi(\sigma(\dbar), \tau(\ebar), c_{i,j})$ whenever $\dbar \subseteq A, \ebar \subseteq B$ (including cases with $\dbar \not\in \II, \ebar \not\in \JJ$), $\sigma$ is a standard permutation of $A$ corresponding to an element of $Aut(I, \le)$ fixing $i$, and $\tau$ is a standard permutation of $B$ corresponding to an element of $Aut(J,\le)$ fixing $j$.
	\end{definition}
	
	By Ramsey and compactness, if $T$ admits coding on tuples via the formula $\phi(\xbar,\ybar,z)$, then it admits a regular tuple-coding configuration via the same formula $\phi(\xbar,\ybar,z)$.
	
	\begin{definition}
		Let $\<\abar_i : i \in I\>, \<\bbar_j: j \in J\>, \set{c_{i,j}|i \in I,j \in J}, \phi(\xbar, \ybar, z)$ be a tuple-coding configuration with $(I,\le), (J,\le)$ countable, dense. The pair $(\dbar, \ebar)$ with $\dbar \subseteq A, \ebar \subseteq B$ is a {\em witness} for $c_{k,\ell}$ if there are open intervals $I' \subseteq I, J' \subseteq J$ with $k \in I', \ell \in J'$ such that for all $k' \in I', \ell' \in J'$, we have $\C \models \phi(\dbar, \ebar, c_{k', \ell'}) \iff (k, \ell) = (k', \ell')$.
		
		A tuple-coding configuration {\em has unique witnesses up to permutation} if for every $c_{i,j} \in C$, the only witnesses for $c_{i,j}$ are of the form $(\sigma(\abar_i), \tau(\bbar_j))$ for some $\sigma$ a permutation of $\abar_i$ and some $\tau$ a permutation of $\bbar_j$
	\end{definition}
	
	\begin{lemma} \label{lemma:uw}
		Let $\<\abar_i : i \in I\>, \<\bbar_j: j \in J\>, \set{c_{i,j}| i,j \in I}, \phi(\xbar, \ybar, z)$ be a regular tuple-coding configuration for $T$, with $|\xbar|+|\ybar|$ minimal. Then this configuration has unique witnesses up to permutation.
	\end{lemma}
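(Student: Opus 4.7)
Assume for contradiction that $(\dbar, \ebar)$ witnesses $c_{k,\ell}$ yet is not of the form $(\sigma(\abar_k), \tau(\bbar_\ell))$; by symmetry suppose some coordinate of $\dbar$ lies outside $\abar_k$, and the construction below treats both sides simultaneously. The plan is to absorb the misplaced coordinates of $\dbar,\ebar$ into the parameters of a new formula $\psi$ on shorter variable tuples, yielding a tuple-coding configuration for $T$ with strictly smaller $|\xbar|+|\ybar|$, which contradicts minimality.

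Concretely, let $D := \set{p : d_p \in \abar_k}$, a proper subset of $\set{1,\dots,|\xbar|}$ by assumption, and pick positions $s_p$ with $d_p = (\abar_k)_{s_p}$ for $p \in D$; set $\dbar_o := (d_p)_{p \notin D}$, supported on a finite index set $I_d \subseteq I \bs \set{k}$. Define $E$, $t_q$, $\ebar_o$, $J_e$ symmetrically on the $\ebar$-side. Shrink the witnessing intervals to open sub-intervals $I'' \ni k$, $J'' \ni \ell$ disjoint from $I_d, J_e$. For $i \in I''$ and $j \in J''$, set $\abar^*_i := ((\abar_i)_{s_p})_{p \in D}$ and $\bbar^*_j := ((\bbar_j)_{t_q})_{q \in E}$, and let $\psi(\xbar',\ybar',z) := \phi(\hat\xbar(\xbar',\dbar_o),\, \hat\ybar(\ybar',\ebar_o),\, z)$ (with parameters those of $\phi$ together with $\dbar_o \cup \ebar_o$), where $\hat\xbar, \hat\ybar$ interleave the new variables with the parameter tuples at the appropriate positions. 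Since $|D| < |\xbar|$ we have $|\xbar'|+|\ybar'| < |\xbar|+|\ybar|$. Indiscernibility of $\<\abar^*_i : i \in I''\>$ and $\<\bbar^*_j : j \in J''\>$ over the parameters of $\psi$ and over each other is routine, following from passing to subtuples and using that $\dbar_o, \ebar_o$ are supported on the restrictions of $\II, \JJ$ to indices outside $I'', J''$.

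The main obstacle is the coding property $\psi(\abar^*_i, \bbar^*_j, c_{k'',\ell''}) \iff (i,j) = (k'',\ell'')$ on $I'' \times J''$. Regularity as stated controls $\phi(\cdot,\cdot,c_{i,j})$ under standard permutations of the first two arguments fixing $i,j$ but does not move the $c_{i,j}$'s themselves, which by itself is insufficient. I expect that a further Ramsey/compactness refinement, carried out alongside the one producing the regular configuration, strengthens the setup so that every $(\sigma,\tau) \in \mathrm{Aut}(I,\le) \times \mathrm{Aut}(J,\le)$ is realized by an automorphism $\Sigma$ of $\C$ fixing the parameters of $\phi$ and sending $\abar_i \mapsto \abar_{\sigma(i)}$, $\bbar_j \mapsto \bbar_{\tau(j)}$, and $c_{i,j} \mapsto c_{\sigma(i),\tau(j)}$. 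Choosing $\sigma$ fixing $I_d \cup (I \bs I'')$ pointwise with $\sigma(k) = i$, and $\tau$ analogously with $\tau(\ell) = j$, the corresponding $\Sigma$ sends $(\dbar,\ebar)$ to $(\hat\xbar(\abar^*_i,\dbar_o),\, \hat\ybar(\bbar^*_j,\ebar_o))$; applying $\Sigma$ to the witness equivalence $\phi(\dbar,\ebar,c_{k^*,\ell^*}) \leftrightarrow (k^*,\ell^*) = (k,\ell)$ transports it to the required coding as $(\sigma(k^*),\tau(\ell^*))$ ranges over $I'' \times J''$, yielding the smaller tuple-coding configuration and the desired contradiction.
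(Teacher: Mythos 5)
Your construction is essentially the paper's: absorb the coordinates of $\dbar,\ebar$ lying outside $\abar_k,\bbar_\ell$ into the parameters of a new formula on shorter variable tuples, contradicting minimality. Two of your refinements are worth noting. Your observation that ``regular'' as stated in Definition~\ref{standard} is insufficient is correct: the stated condition fixes $c_{i,j}$ and only permutes $A$ and $B$, which genuinely does not allow one to move the witnessed $c$, and moving it is needed both for the easy case $\dbar\cap\abar_k=\emptyset$ and for verifying the coding property of the reduced configuration. The intended notion is exactly what you conjecture: every pair $(\sigma,\tau)\in Aut(I,\le)\times Aut(J,\le)$ should be induced by an automorphism of $\C$ fixing the parameters of $\phi$ and sending $\abar_i\mapsto\abar_{\sigma(i)}$, $\bbar_j\mapsto\bbar_{\tau(j)}$, and $c_{i,j}\mapsto c_{\sigma(i),\tau(j)}$. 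This is achievable by treating $\II\cup\JJ\cup C$ as a single doubly-indexed array in the Ramsey/compactness extraction (the coding condition is $(\sigma,\tau)$-equivariant, so it survives the extraction), so your ``I expect'' can safely become a claim; the stated ``regular'' is then the special case where $\sigma,\tau$ fix $i,j$. Second, your symmetric treatment of the $\xbar$- and $\ybar$-sides is cleaner than the paper's, which rewrites only the $\xbar$-side while retaining $\<\bbar_j\>$ unchanged: when $\ebar$ also has coordinates outside $\bbar_\ell$, the configuration $\<\abar^*_i\>,\<\bbar_j\>,\phi^*$ does not obviously code, whereas your $\<\abar^*_i\>,\<\bbar^*_j\>,\psi$ does. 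A small loose end common to both arguments: if $\dbar$ lies entirely inside $\abar_k$ but with a repeated coordinate, it is not of the form $\sigma(\abar_k)$, yet your hypothesis (``some coordinate of $\dbar$ lies outside $\abar_k$'') and the paper's ``$\dbar^*\neq\dbar$'' both miss this case; one handles it by identifying the repeated variables of $\xbar$, again shortening $|\xbar|$.
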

	\begin{proof}
		Suppose not, and let $(\dbar, \ebar)$ be a witness for $c_{i,j}$, such that $(\dbar, \ebar) \neq (\sigma(\abar_i), \tau(\bbar_j))$ for any $\sigma, \tau$. First, if either $\dbar \cap \abar_i = \emptyset$ or $\ebar \cap \bbar_j = \emptyset$, then regularity immediately implies that $(\dbar, \ebar)$ is not a witness. So let $\dbar^*$ be the subsequence of $\dbar$ intersecting $\abar_i$, and $\ebar^*$ the subsequence of $\ebar$ intersecting $\bbar_j$. Either $\dbar^* \neq \dbar$ or $\ebar^* \neq \ebar$ so assume the former.
		
		Let $I^* \subseteq I$ be an open interval such that $\dbar \cap \bigcup (\abar_i : i \in I^*) = \dbar^*$. Let $\phi^*(\xbar^*, \ybar, z)$ be the formula obtained by starting with $\phi(\dbar, \ybar, z)$, and then replacing the subtuple $\dbar^*$ with the variables $\xbar^*$; so we have plugged the elements of $\dbar\bs\dbar^*$ as parameters into $\phi$. For each $k \in I^*$, let $\abar^*_k$ be the restriction of $\abar_k$ to the coordinates corresponding to $\dbar^*$. Then $\<\abar^*_i : i \in I^*\>, \<\bbar_j: j \in J\>, \set{c_{i,j}|i \in I^*, j \in J}, \phi^*(\xbar^*, \ybar, z)$ is also a regular tuple-coding configuration, contradicting the minimality of $|\xbar|+|\ybar|$.
	\end{proof}
	
	The following Lemma completes the proof of Theorem~\ref{together}.

	\begin{lemma} \label{lemma:tuple to 1}
		Suppose $T$ admits coding on tuples. Then $T$ admits coding in an expansion by three unary predicates.
	\end{lemma}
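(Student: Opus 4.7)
The plan is to first invoke Lemma \ref{lemma:uw} to reduce to a regular tuple-coding configuration $\II = \<\abar_i : i \in I\>$, $\JJ = \<\bbar_j : j \in J\>$, $\{c_{i,j}\}$, $\phi(\xbar, \ybar, z)$ with $m = |\xbar|$ and $n = |\ybar|$ of minimal total length, so that witnesses are unique up to permutation. By Remark \ref{r:tidy}, I may further assume the configuration is tidy, i.e.\ $\phi(\abar_i, \bbar_j, e)$ fails for every $e \in A \cup B$, and by a Ramsey/density argument I may assume the tuples have pairwise disjoint entries.

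Next, I would introduce three unary predicates in the expansion: $U_1$ marking $A = \bigcup_i \abar_i$, $U_2$ marking $B = \bigcup_j \bbar_j$, and $U_3$ marking a single distinguished coordinate of each tuple (say $\{a_i^1 : i \in I\} \cup \{b_j^1 : j \in J\}$). Writing $V_1 := U_1 \cap U_3$ and $V_2 := U_2 \cap U_3$, the candidate coding formula would be
\[
\psi(x,y,z) := V_1(x) \wedge V_2(y) \wedge \exists \xbar'\, \exists \ybar' \bigl[ \xbar' \subseteq U_1 \wedge \ybar' \subseteq U_2 \wedge x \in \xbar' \wedge y \in \ybar' \wedge \phi(\xbar', \ybar', z) \wedge \forall z'\, (z' \neq z \to \neg \phi(\xbar', \ybar', z')) \bigr],
\]
with $\xbar', \ybar'$ of lengths $m, n$ respectively. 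The plan is to verify that $\psi$ defines a pairing from $V_1 \times V_2$ onto its image, which by definition gives coding in a monadic expansion by three unary predicates.

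The uniqueness half of the pairing is the heart of the construction. If $\psi(a_i^1, b_j^1, z)$ holds via $(\xbar', \ybar')$, the final conjunct forces $\phi(\xbar', \ybar', z') \Leftrightarrow z' = z$ globally; since $\xbar' \subseteq A$ and $\ybar' \subseteq B$ by the restriction to $U_1, U_2$, this exhibits $(\xbar', \ybar')$ as a witness for $z$ (taking $I' = I, J' = J$). Applying unique witnesses up to permutation, $\xbar'$ must be a permutation of some $\abar_k$ and $\ybar'$ of some $\bbar_\ell$, and because $a_i^1 \in \xbar'$ and the tuples have disjoint entries we conclude $k = i$, $\ell = j$, hence $z = c_{i,j}$.

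The main obstacle is the existence half: I need to verify that $(\xbar', \ybar') = (\abar_i, \bbar_j)$ actually satisfies the global uniqueness clause $\forall z'\, (z' \neq c_{i,j} \to \neg \phi(\abar_i, \bbar_j, z'))$. Tidy-ness handles $z' \in A \cup B$ and the tuple-coding property handles $z' \in C \setminus \{c_{i,j}\}$, but $\phi(\abar_i, \bbar_j, z')$ for arbitrary $z' \in \C$ is not directly constrained. I would address this by first restricting attention (via compactness and Ramsey) to a realization of the configuration in which $\phi(\abar_i, \bbar_j, \cdot)$ picks out only $c_{i,j}$, or by adjusting $\psi$ so that the universal quantifier ranges only over a definable set (for example, by using one of the three unary predicates to mark an appropriate enlargement of $C$ rather than the distinguished coordinates, and recovering the singletons $V_1, V_2$ definably from the remaining predicates and $\phi$). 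The delicate point is balancing which subsets the three predicates mark so that both existence and uniqueness go through simultaneously; the core mechanism — minimality yielding unique witnesses, together with a global uniqueness clause that exhibits witnesses inside the formula — is what reduces tuple-coding to coding on singletons.
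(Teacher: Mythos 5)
Your overall strategy matches the paper: pass to a regular configuration of minimal total length, invoke Lemma~\ref{lemma:uw} for uniqueness of witnesses, and build a formula with an internal uniqueness clause to kill spurious $z$'s. But your initial choice of third predicate is the wrong one, and it is precisely what creates the gap you then (correctly) diagnose. By marking the distinguished coordinates with $U_3$ and leaving $\forall z'$ unrestricted, you have no way to verify the existence half: tidiness and the coding property only control $\phi(\abar_i,\bbar_j,z')$ for $z' \in A \cup B \cup C$, and an arbitrary $z' \in \C$ can satisfy $\phi(\abar_i,\bbar_j,z')$ for all you know. Your first proposed repair (find a realization where $\phi(\abar_i,\bbar_j,\cdot)$ picks out only $c_{i,j}$) does not in general succeed; there is no reason such a realization should exist, and Ramsey/compactness will not manufacture one.

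Your second proposed repair is the right one, and is what the paper does: spend the third unary predicate on $C$ rather than on the distinguished coordinates, and restrict the universal quantifier to $\forall z' \in C$. With $A(x) \wedge B(y) \wedge C(z)$ up front and $\forall z' \in C$ inside, the existence half is discharged exactly as you hoped (taking $\xbar'\ybar'$ to be the remaining coordinates of $\abar_i\bbar_j$), and the uniqueness half runs through unique-witnesses-up-to-permutation just as in your argument. The point you were reaching for but did not quite land on is that the sets $A_1 = \set{a_i}$ and $B_1 = \set{b_j}$ of first coordinates need not be marked or even definable: the definition of admitting coding asks only for sets of singletons on which the formula codes, not for definable ones. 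So $U_3$ is wasted on the coordinates, and that budget is exactly what is needed for $C$. With that reallocation your argument closes; as written it has a genuine gap in the existence direction.
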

	\begin{proof}
		Choose a tuple-coding configuration $$\bar{A}=\<\abar_i : i \in I\>, \bar{B} = \<\bbar_j: j \in J\>, C=\set{c_{i,j}| i \in I,j \in I}, \phi(\xbar, \ybar, z)$$ with $|\xbar|+|\ybar|$ as small as possible.
		By the remarks following Definition~\ref{standard}. we may assume this configuration is regular, so by 
		Lemma \ref{lemma:uw}, it has unique witnesses up to permutation.
		Let $L^*=L\cup\set{A,B,C}$ and let $\C^*$ be the expansion of $\C$ interpreting $A$ as $\bigcup \bar{A}$, $B$ as $\bigcup \bar{B}$, and $C$ as itself.
		Let 
		\begin{align*}
			&\phi^*(x, y, z) := A(x) \wedge B(y) \wedge C(z) \wedge \\
			&\exists \xbar' \subseteq A, \ybar' \subseteq B(\phi(x\xbar', y\ybar', z) \wedge \forall z' \in C(z' \neq z \ra \neg \phi(x\xbar', y\ybar', z')))
		\end{align*}
		
		Let $a_i$ be the first coordinate of $\abar_i$, and $b_j$ the first coordinate of $\bbar_j$. Then 
		$A_1=\set{a_i : i \in I}, B_1=\set{b_j: j \in J}$, and $C$ witness coding  in $T^*=Th(\C^*)$ via the $L^*$-formula $\phi^*(x, y, z)$.
	\end{proof}
	
	\section{Finite structures} \label{s:fin}

	In this section, we restrict the language $L$ of the theories we consider to be relational (i.e., no function symbols) with only finitely many constant symbols.
	
	\begin{definition} \label{def:fin}  For a complete theory $T$ and $M \models T$, $Age(M)$, the isomorphism types of finite substructures of $M$ does not depend on the choice of $M$, so we let
		$Age(T)$ denote this class of isomorphism types.  
		
		The {\em growth rate} of $Age(T)$ (sometimes called the profile or (unlabeled) speed) is the function $\varphi_T(n)$ counting the number of isomorphism types with $n$ elements in $Age(T)$.
		
		 We also investigate $Age(T)$
		under the quasi-order of embeddability.We say $Age(T)$ is {\em well-quasi-ordered (wqo)\/} if this class does not contain an infinite antichain, and we say
		$Age(T)$ is {\em $n$-wqo}  if $Age(M^*)$ is wqo for every expansion $M^*$ of any model $M$ of $T$ by $n$ unary predicates that partition the universe.
	\end{definition}
	
	The definition of $n$-wqo is sometimes given for an arbitrary hereditary class $\CC$ rather than an age, with $\CC$ $n$-wqo if the class $\CC^*$ containing every partition of every structure of $\CC$ by at most $n$ unary predicates remains wqo. Our definition is possibly weaker, but then its failure is stronger.
	
	\begin{example}  Let $T=Th(\Z, succ)$.  Then $Age(T)$ is wqo, but not 2-wqo, since $Age(T)$ contains arbitrarily long finite paths, and marking the endpoints of these paths with a unary predicate gives an infinite antichain.
		
		By contrast, if $T = Th(\Z, \leq)$, then $Age(T)$ can be shown to be $n$-wqo for all $n$.
	\end{example}

	The following lemma shows that when considering $n$-wqo, adding finitely many parameters is no worse than adding another unary predicate.
	
	\begin{lemma} \label{lemma:wqo const}
		Suppose $Age(M)$ is $(n+1)$-wqo. If $M^*$ is an expansion of $M$ by finitely many constants, then $Age(M^*)$ is $n$-wqo.
	\end{lemma}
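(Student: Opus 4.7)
My plan is by contradiction, reducing a failure of $n$-wqo for $Age(M^*)$ to a failure of $(n+1)$-wqo for $Age(M)$. Assume $Age(M^*)$ is not $n$-wqo, witnessed by a partition expansion $M^{*+} = (M^*, Q_1, \ldots, Q_n)$ and an infinite antichain $(N_i^*)_{i \in \omega}$ in $Age(M^{*+})$. Let $C = \{c_1, \ldots, c_k\}$ be the constants of $M^*$; since substructures of $M^*$ are closed under constants, each $N_i^*$ contains $C$. Build a partition expansion $M^+ = (M, P_1, \ldots, P_{n+1})$ of $M$ by setting $P_j := Q_j \setminus C$ for $1 \le j \le n$ and $P_{n+1} := C$, and let $N_i^+$ be the induced $M^+$-substructure on the universe of $N_i^*$; in particular $N_i^+ \cap P_{n+1} = C$.

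Next, I would exploit the hypothesis that $Age(M)$ is $(n+1)$-wqo to conclude $Age(M^+)$ is wqo, whence after passing to an infinite subsequence I may assume there is an embedding $N_i^+ \hookrightarrow N_j^+$ of $M^+$-structures for every $i < j$. Each such embedding preserves $P_{n+1}$ and must send $C$ to $C$, inducing a permutation of the finite set $C$. For each pair $i<j$, let $S_{ij} \subseteq \Sym(C)$ denote the nonempty set of permutations so realized. Applying infinite Ramsey to pairs of indices, colored by the finitely many subsets of $\Sym(C)$, I refine to an infinite subsequence on which $S_{ij}$ equals a common nonempty $S \subseteq \Sym(C)$.

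For the key step, given $i<j<k$ in the refined subsequence and $\sigma,\tau \in S$, composing an embedding $N_i^+ \hookrightarrow N_j^+$ realizing $\sigma$ with one $N_j^+ \hookrightarrow N_k^+$ realizing $\tau$ yields an embedding $N_i^+ \hookrightarrow N_k^+$ realizing $\tau\sigma \in S_{ik} = S$. Thus $S$ is a finite sub-semigroup of the finite group $\Sym(C)$, hence a subgroup: for any $\sigma \in S$, the positive powers $\sigma,\sigma^2,\ldots$ all lie in $S$, and $\sigma^{\mathrm{ord}(\sigma)} = \mathrm{id}$ witnesses $\mathrm{id}_C \in S$. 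Hence for some $i<j$ there is an $M^+$-embedding $N_i^+ \hookrightarrow N_j^+$ fixing $C$ pointwise, which, by the construction of the $P_j$, is precisely an embedding $N_i^* \hookrightarrow N_j^*$ in the language of $M^{*+}$, contradicting the antichain property.

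The main obstacle I anticipate is that a single extra unary predicate $P_{n+1}$ cannot individually distinguish the $k$ constants --- any $M^+$-embedding can permute $C$ arbitrarily subject to preserving $L$-structure. The combined Ramsey reduction and the elementary fact that any nonempty finite sub-semigroup of a group is a subgroup are what circumvent this: they force \emph{some} embedding in the infinite chain to fix $C$ pointwise, which is all that is needed to recover an $M^{*+}$-embedding and derive the contradiction.
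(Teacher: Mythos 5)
Your proposal is correct and follows the same overall route as the paper: replace the named constants by an extra unary predicate $P_{n+1}$ to land in an $(n+1)$-partitioned expansion, use the $(n+1)$-wqo hypothesis to extract an infinite chain of embeddings, observe that each embedding permutes the finite set $C$ of (former) constant interpretations, and then use the finiteness of $\Sym(C)$ to locate an embedding that fixes $C$ pointwise, which is then an $L^{*+}$-embedding contradicting the antichain. The paper disposes of the last step in one clause (``as there are only finitely many permutations of the constants, some embedding in the chain must preserve them''); you make it explicit via a Ramsey refinement of the color map $(i,j) \mapsto S_{ij}$ together with the fact that a nonempty finite sub-semigroup of a group is a subgroup, which is a valid if slightly heavier justification. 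A more economical way to fill the same gap is to fix one source in the chain, look at the permutations induced by the composite embeddings $N^+_{i_1} \hookrightarrow N^+_{i_k}$, pigeonhole to find $k < l$ with the same induced permutation of $C$, and note that the chain embedding $N^+_{i_k} \hookrightarrow N^+_{i_l}$ must then fix $C$ pointwise; but nothing in your argument is wrong, and it proves exactly what is needed.
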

	\begin{proof}
		Suppose an expansion by $k$ constants is not  $n$-wqo, as witnessed by an infinite antichain $\set{M^+_i}_{i \in \omega}$ in a language $L^+$ expanding the initial language by the $k$ constants and by $n$ unary predicates. Let $M^*_i$ be the structure obtained from $M^+_i$ by forgetting the $k$ constants, but naming their interpretations by a single new unary predicate. As $Age(M)$ is $(n+1)$-wqo, $\set{M^*_i}_{i \in \omega}$ contains an infinite chain $M^*_{i_1} \hookrightarrow  M^*_{i_2} \hookrightarrow\dots$
		under embeddings.   As there are only finitely many permutations of the constants, some embedding in the chain must preserve them, contradicting that $\set{M^+_i}_{i \in \omega}$ is an antichain. 
	\end{proof}
	
	In both Theorem \ref{thm:fin} and \ref{thm:coll}, the assumption that $T$ has quantifier elimination is only used to get that the formula witnessing that $T$ admits coding on tuples is quantifier-free, and the formula witnessing the order property in the stability part of Theorem \ref{thm:coll}, so the hypotheses of the theorems can be weakened to only these specific formulas being quantifier-free. This weakened assumption is used in \cite{ST}. From the proof of Proposition \ref{preservingindisc}, if the failure of the f.s. dichotomy is witnessed by quantifier-free formulas, then the formula witnessing coding on tuples will be quantifier-free as well.
	
		\textcolor{red}{In Theorem \ref{thm:fin}, the growth rate statement remains intact, but the claim that $Age(T)$ is not 4-wqo is unproven. See Appendix \ref{app:wqo}.}
	
	\begin{theorem} \label{thm:fin}
		If a complete  theory $T$ has quantifier elimination in a relational language with finitely many constants is not monadically NIP, then  $Age(T)$ has growth rate asymptotically greater than $(n/k)!$ for some $k \in \omega$ \st{and is not 4-wqo}.
	\end{theorem}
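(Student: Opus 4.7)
The plan is to invoke Theorem~\ref{together} to get that $T$ admits coding on tuples, then encode bipartite graphs as finite substructures in an expansion $T^+$ of $T$ by three unary predicates, and derive both the growth rate and the failure of $4$-wqo from this encoding. Specifically, since $T$ is not monadically NIP, Theorem~\ref{together} gives coding on tuples, and by quantifier elimination the witnessing formula $\phi(\xbar,\ybar,z)$ may be taken quantifier-free. Fix such a configuration $(\II,\JJ,C,\phi)$ to be regular, minimal in $|\xbar|+|\ybar|=:k_0$, and tidy (Remark~\ref{r:tidy}), naming the finitely many parameters of $\phi$ as additional constants; let $T^+$ be the expansion of $T$ by three unary predicates $A,B,C$ naming $\bigcup\II$, $\bigcup\JJ$, and $C$. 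For each bipartite graph $G$ on $[m]\sqcup[m]$ with edge set $E(G)$, form the finite $L^+$-substructure
\[
N_G := \bigcup_{k=1}^m \abar_{i_k} \cup \bigcup_{\ell=1}^m \bbar_{j_\ell} \cup \{c_{i_k,j_\ell} : (k,\ell)\in E(G)\}
\]
of size $2mk_0+|E(G)|$.

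The central rigidity claim is that any $L^+$-embedding $f:N_G\hookrightarrow N_{G'}$ sends each canonical tuple $\abar_{i_k}$ onto some $\abar_{i'_{k'}}$ as a set (possibly permuting coordinates internally) and carries $c_{i_k,j_\ell}$ to $c_{i'_{k'},j'_{\ell'}}$, inducing a bipartition-preserving bipartite graph embedding $G\hookrightarrow G'$. Granted this, the non-$4$-wqo assertion follows because the even cycles $\{C_{2n}\}_{n\ge 3}$ form an antichain of bipartite graphs under induced-subgraph embedding (an induced $C_{2n}$ in $C_{2m}$ forces $n=m$); hence $\{N_{C_{2n}}\}_{n\ge 3}$ is an antichain in $Age(T^+)$, and Lemma~\ref{lemma:wqo const} lets us absorb the added constants at the cost of one additional unary predicate, giving that $Age(T)$ is not $4$-wqo. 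For the growth rate, the number of isomorphism classes of bipartite graphs on $[m]\sqcup[m]$ with $E$ edges is at least $\binom{m^2}{E}/(m!)^2$. Choosing $m$ and $E$ so that $2mk_0\approx n/2$ and $E\approx n/2$ (concretely, $m=\lfloor n/(4k_0(k_0+1))\rfloor$), a Stirling computation shows this quantity is at least $e^{cn\log n-O(n)}$ for some constant $c=c(k_0)>0$; combined with $|Age(T)_n|\ge |Age(T^+)_n|/2^{3n}$ (each $L$-structure of size $n$ admits at most $2^{3n}$ $L^+$-expansions), an adjustment of $k$ yields $|Age(T)_n|\ge (n/k)!$ for some fixed $k$ and all large $n$.

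The main obstacle is the rigidity claim: while an embedding $f$ automatically preserves $A,B,C$ and the quantifier-free $\phi$, one must argue that $f$ cannot send a canonical tuple $\abar_{i_k}$ to a ``mixed'' tuple drawn from several different $\abar_{i'_{k'}}$'s in $N_{G'}$. Here the combined effect of regularity, minimality of $|\xbar|+|\ybar|$, and the uniqueness-of-witnesses-up-to-permutation property (Lemma~\ref{lemma:uw}), together with tidiness, is essential to exclude the non-canonical $\phi$-triples that would otherwise enable such mixing and create spurious embeddings.
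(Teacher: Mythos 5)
Your high-level plan — extract a regular, minimal, tidy tuple-coding configuration with quantifier-free $\phi$, encode bipartite graphs as finite substructures of a three-unary-predicate expansion, and read off both growth rate and non-$4$-wqo — is the same as the paper's. But your argument hinges on the unproved ``rigidity claim,'' which you acknowledge is ``the main obstacle'' and then dismiss by listing auxiliary properties of the configuration. Those properties do not by themselves force an embedding (or isomorphism) between your $N_G$'s to respect the tuple blocks. The real difficulty is which formula transfers between $\C$ and the finite substructure. The singleton formula $\phi^*$ of Lemma~\ref{lemma:tuple to 1} has the shape $\exists\xbar'\exists\ybar'\bigl(\phi\wedge\forall z'\in C(\cdots)\bigr)$, and the inner universal quantifier is \emph{not} reflected downward to a substructure: $N_G$ may simply lack the element $c_{k',\ell'}$ that witnesses in $\C$ that some non-canonical triple $(\dbar,\ebar,c)$ is not a $\phi^*$-triple. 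So in your $N_G$ — built only from the canonical tuples and the $c_{i,j}$'s for edges — the $\phi^*$-relation is out of control, and neither isomorphisms nor embeddings between your structures are forced to induce graph isomorphisms or subgraph embeddings. Tidiness controls the extension of $\exists z\,\phi$, which is why it suffices for the collapse in Theorem~\ref{thm:coll}, but it says nothing about the universal inside $\phi^*$, which is exactly what is needed here.

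The paper's fix is a construction step you omit: for each included $c_{i,j}$ one also adds the four ``buffer'' elements $c_{i\pm\epsilon,j},\,c_{i,j\pm\epsilon}$, with $i\pm\epsilon$ (resp.\ $j\pm\epsilon$) chosen strictly between $i$ (resp.\ $j$) and its nearest used index. Regularity together with unique-witnesses-up-to-permutation then show that any $\C$-counterexample to the universal can be traded for one of these four buffer elements already present in the finite structure, which yields the claim that $\phi^*$ is \emph{absolute} between $\C$ and every structure in the constructed family. With absoluteness in hand, the $\phi^*$-relation in each finite structure is exactly the canonical one, so the encoding of bipartite graphs is injective on isomorphism types and embeddings induce (not necessarily induced) subgraph containments — precisely the two facts you need and cannot currently justify. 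Your remaining steps (even cycles for the antichain, Lemma~\ref{lemma:wqo const} to absorb the constants into a fourth predicate, and the growth-rate count, for which the paper instead cites \cite{Rap}) are fine once the absoluteness claim is in place; as written, though, the proposal has a genuine gap at exactly the point you flagged.
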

	\begin{proof}
		Since $T$ is not monadically NIP, let 
		$$\<\abar_i : i \in I\>, \<\bbar_j: j \in J\>, \set{c_{i,j}| i \in I, j \in J}, \phi(\xbar, \ybar, z)$$ 
		be a regular tuple-coding configuration with unique witnesses up to permutation. The only place we use $T$ has QE is to choose $\phi$ quantifier-free. Let $L^*$ expand by unary predicates for $A, B$, and $C$ as well as constants for the parameters of $\phi$, and let $\phi^*$ be as in the proof of \ref{lemma:tuple to 1}. Let $\AA \subseteq Age(T^*)$ be the set of finite substructures that can be constructed as follows.
		\begin{enumerate}
			\item Pick $X \subset_{fin} I$, $Y \subset_{fin} J$, and $E \subset X \times Y$.
			\item Start with $\set{\abar_i : i \in X} \cup \set{\bbar_j: j \in Y} \cup \set{c_{i, j} | (i,j) \in E}$.
			\item For every point $c_{i,j}$ added in the previous step, add the four elements $c_{i \pm \epsilon, j}$ and $c_{i, j \pm \epsilon}$, where $i \pm \epsilon$ are closer to $i$ than any other element of $X$, and $j \pm \epsilon$ are closer to $j$ than any other element of $Y$.
			\item Add the parameters of $\phi^*$.
		\end{enumerate}
		
		\begin{claim*}
			For any $M \in \AA$ and $a, b, c \in M$, $\C \models \phi^*(a, b, c) \iff M \models \phi^*(a,b,c)$.
		\end{claim*}
		\begin{claimproof}
			Since $\phi$ is quantifier-free, it remains to check that if the existential quantifiers in $\phi^*$ are witnessed in $\C$ and $\C \models \phi^*(a,b,c)$ then they are witnessed in $M$, and if the universal fails in $\C$ then it fails in $M$. From the unary predicates at the beginning of $\phi^*$, we may let $a \in \abar_i, b \in \bbar_j$, and $c=c_{k,\ell}$. If $\C \models \phi(a,b,c)$, the only tuple in $\C$ that can witness $\xbar'$ is the rest of the tuple $\abar_i$, which will be in $M$ because it only contains full tuples, and similarly for witnessing $\ybar'$. Since our configuration has unique witnesses up to permutation, if the universal quantifier fails in $\C$, this is witnessed by an element $c_{k', \ell'}$ with $i - \epsilon \leq k' \leq i+\epsilon$ and $j - \epsilon \leq \ell' \leq j+\epsilon$. By regularity, this failure is also witnessed by some element in $\set{c_{i \pm \epsilon, j}, c_{i, j \pm \epsilon}}$.
		\end{claimproof}
		
		Given a bipartite graph $G$ with $n$ edges and no isolated vertices, we may encode it as a structure $M_G \in \AA$ by starting with tuples $\abar_i$ for each point in one part and tuples $\bbar_j$ for each point in the other part, and including $c_{k, \ell}$ whenever we want to encode an edge between $\abar_k$ and $\bbar_{\ell}$. Note that $|M_G| = O(n)$, and this encoding preserves isomorphism in both directions. In the proof of \cite{Rap}*{Theorem 1.5}, the asymptotic growth rate of such graphs is shown to be at least $(n/5)!$, which gives the desired growth rate for $Age(T^*)$ with the constant $k$ depending on the length of the tuples in the tuple-coding configuration. Since expanding by finitely many unary predicates and constants increases the growth rate by at most an exponential factor, we also get the desired growth rate for $Age(T)$. 
		
		\st{Furthermore, if $M_H$ embeds into $M_G$, then $H$ must be a (possibly non-induced) subgraph of $G$. So we get that $Age(T^*)$ is not wqo by encoding even cycles. We expanded by three unary predicates, and by Lemma {\ref{lemma:wqo const}} the parameters may be replaced by another unary predicate while still preserving the failure of wqo, so we get that $Age(T)$ is not 4-wqo.}
	\end{proof}

	\begin{remark}  {\em
			There is a homogeneous structure, with automorphism group $S_\infty \Wr S_2$ in its product action, that is not monadically NIP and whose growth rate is the number of bipartite graphs with a prescribed bipartition, $n$ edges, and no isolated vertices. So the lower bound in this theorem cannot be raised above the growth rate of such graphs. Precise asymptotics for this growth rate are not known, although it is slower than $n!$ and  \cite{CPS}*{Theorem 7.1} improves Macpherson's lower bound to $(\frac{n}{\log{n}^{2 + \epsilon}})^n$ for every $\epsilon > 0$.
		}
	\end{remark}
	
	If Conjecture \ref{conj:hom} from the Introduction (in particular $(1) \Ra (2)$) is confirmed, then the lower bound on the growth rate in Theorem \ref{thm:fin} would also confirm \cite{Rap}*{Conjecture 3.5} that for homogeneous structures there is a gap from exponential growth rate to growth rate at least $(n/k)!$ for some $k \in \omega$.
	
	Theorem \ref{thm:fin} is somewhat surprising. Since passing to substructures can be simulated by adding unary predicates, it is clear that if $T$ is monadically tame, then $Age(T)$ should be tame. However, unary predicates can do more, so it seems plausible that $Age(T)$ could be tame even though $T$ is not monadically tame. Our next theorem gives some explanation for why this does not occur, at least when assuming quantifier elimination. 
	
	First we need to define stability and NIP for hereditary classes. The following definition is standard and appears, for example, in \cite{ST}*{\S 8.1}.

%
%
	
	\begin{definition}
	        For a formula $\phi(\xbar,\ybar)$ and a bipartite graph $G=(I,J,E)$, we say a structure $M$ {\em encodes $G$ via $\phi$} if
		 there are sets $A=\set{\abar_i | i \in I} \subseteq M^{|x|}, B=\set{\bbar_j | j \in J} \subseteq M^{|y|}$ such that 
		 $M\models\phi(\abar_i,\bbar_j)\Leftrightarrow G\models E(i,j)$.

		A class of structures $\CC$ has {\em IP} if there is some formula $\phi(\xbar, \ybar)$ such that for every finite, bipartite graph $G=(I,J,E)$, there is some $M_G\in\CC$
		encoding $G$ via $\phi$.  
		 Otherwise, $\CC$ is {\em NIP}.
		
		A class of structures $\CC$ is {\em unstable} if there is some formula $\phi(\xbar, \ybar)$ such that for every finite half-graph $G$, there is some
		$M_G\in\CC$ encoding $G$ via $\phi$.  Otherwise, $\CC$ is {\em stable}.
	\end{definition}
Equivalently, by compactness arguments, $\CC$ is NIP (resp. stable) if and only if every completion of $Th(\CC)$, the common theory of structures in $\CC$, is.
	Note that it suffices to witness that $\CC$ has IP or is unstable using a formula with parameters, since we can remove them by appending the parameters to each $\abar_i$.
	
	The sort of collapse between monadic NIP and NIP in hereditary classes observed in the next theorem occurs for binary ordered structures \cite{ST}, since there the formula giving coding on tuples is quantifier-free. It also occurs for monotone graph classes (i.e. specified by forbidding non-induced subgraphs), where NIP actually collapses to monadic stability, and agrees with nowhere-denseness \cite{AA}.

	\begin{theorem} \label{thm:coll}
		Suppose that a complete theory $T$  in a relational language with finitely many constants has quantifier elimination. Then $Age(T)$ is NIP if and only if $T$ is monadically NIP, and $Age(T)$ is stable if and only if $T$ is monadically stable.
	\end{theorem}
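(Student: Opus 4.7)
For both the NIP and stability statements, the ``monadic implies age'' directions are quick applications of QE. If $Age(T)$ has IP witnessed by $\phi(\xbar,\ybar)$, replace $\phi$ by an equivalent (modulo $T$) quantifier-free $\phi'$; then $\phi'$ still witnesses IP in $Age(T)$ since QF formulas are absolute between substructures and models, and each encoding of a finite bipartite graph embeds into a model of $T$. Compactness produces a single model of $T$ encoding an infinite bipartite graph, so $T$ has IP and hence is not monadically NIP. The stability case is identical with half-graphs in place of bipartite graphs.

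The plan for the hard NIP direction is to contrapositively build IP in $Age(T)$ from a failure of monadic NIP. By Theorem~\ref{together}, $T$ admits coding on tuples, and by QE (via the remark before Theorem~\ref{thm:fin}) the coding formula $\phi(\xbar,\ybar,z)$ can be taken QF. Fix a regular tuple-coding configuration of minimal tuple length, so that Lemma~\ref{lemma:uw} yields unique witnesses up to permutation and Remark~\ref{r:tidy} yields tidiness. Let $\dbar$ be the parameters of $\phi$, which, by a generic choice of the configuration, we may assume are disjoint from the $c_{i,j}$'s. Modify $\phi$ to $\phi'(\xbar,\ybar,z) := \phi(\xbar,\ybar,z) \wedge \bigwedge_{p \in \dbar} z \neq p$, still QF. The key property is that in $\C$, whenever $z \in \bigcup \II \cup \bigcup \JJ \cup \{c_{k,\ell} : k \in I, \ell \in J\} \cup \dbar$, we have $\C \models \phi'(\abar_i,\bbar_j,z)$ iff $z = c_{i,j}$: tidiness rules out $z$ inside the sequences, the coding property handles $z = c_{k,\ell}$, and the added conjunct handles $z \in \dbar$.

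For each finite bipartite $G = (V_1,V_2,E)$, embed $V_1,V_2$ disjointly into $I,J$ and form $M_G \subset \C$ as the union of the $\abar_v$ for $v \in V_1$, the $\bbar_w$ for $w \in V_2$, the $c_{v,w}$ for $(v,w) \in E$, the parameters $\dbar$, and the interpretations of the finitely many constants of $L$. Since $L$ is otherwise relational, $M_G$ is a substructure and lies in $Age(T)$. Because $\phi'$ is QF, evaluation in $M_G$ agrees with $\C$, so $M_G \models \exists z\, \phi'(\abar_v,\bbar_w,z)$ iff $c_{v,w} \in M_G$ iff $(v,w) \in E$. Hence $\psi(\xbar,\ybar;\dbar) := \exists z\, \phi'(\xbar,\ybar,z)$, with the fixed parameters $\dbar$ absorbed into the tuples per the remark after the IP definition, witnesses IP for $Age(T)$.

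For the hard stability direction, I split by cases. If $T$ is not NIP, the QE argument of the first paragraph applied to a half-graph formula gives instability of $Age(T)$. Otherwise $T$ is stable but not monadically stable, so by the Baldwin--Shelah result quoted in the introduction $T$ is not even monadically NIP; the NIP direction then yields IP in $Age(T)$, which implies instability. The main obstacle is ensuring no spurious witnesses in $M_G$ make $\exists z\, \phi'(\abar_v,\bbar_w,z)$ true when $(v,w) \notin E$---this is handled jointly by tidiness, uniqueness of witnesses (via minimality of tuple length), and the exclusion of $\dbar$ via the modification from $\phi$ to $\phi'$.
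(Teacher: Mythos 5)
Your hard NIP direction follows the paper's argument closely (though regularity and unique witnesses are not actually needed: tidiness from Remark~\ref{r:tidy} alone kills spurious witnesses $z\in\bigcup\II\cup\bigcup\JJ$, the defining property of the coding configuration kills $z\in C$, and the conjunct $z\neq p$ for $p\in\dbar$ kills the parameters).

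The opening paragraph, however, has a genuine gap. You replace $\phi(\xbar,\ybar)$ by a $T$-equivalent quantifier-free $\phi'$ and claim $\phi'$ still witnesses IP in $Age(T)$. But $\phi\leftrightarrow\phi'$ holds only in models of $T$, whereas the encoding in the definition of IP for a class is evaluated inside the finite structures $M_G\in Age(T)$, which are not models of $T$. There is no way to pass truth of the quantified $\phi$ from $M_G$ to a model $N_G\supseteq M_G$, so the intended chain $M_G\models\phi \Leftrightarrow N_G\models\phi \Leftrightarrow N_G\models\phi' \Leftrightarrow M_G\models\phi'$ fails at its first link. Worse, the conclusion you reach --- that $T$ itself has IP --- is strictly stronger than needed and is false in general: a theory $T$ with QE in a finite-constant relational language that is NIP but not monadically NIP (such as a suitable stable-not-monadically-stable example; the vector-space example in the introduction is of this kind) has $Age(T)$ with IP, yet $T$ is NIP. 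The paper instead takes a model $N$ of the universal theory of $T$ in which $\phi$ encodes the generic bipartite graph, embeds $N$ into some $M\models T$, names $N$ by a unary predicate $U$, and relativizes $\phi$ to $U$; this gives IP in the unary expansion $(M,U)$ and hence shows $T$ is not monadically NIP, which is exactly what is needed and no more. A related slip occurs in your hard stability direction: the case split ``$T$ not NIP'' versus ``$T$ stable but not monadically stable'' is not exhaustive (it misses NIP unstable theories such as DLO). The paper's split is ``$T$ unstable'' (push a quantifier-free order-property formula into finite substructures) versus ``$T$ stable but not monadically stable'' (apply Baldwin--Shelah and reduce to the NIP case).
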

	\begin{proof}
		We first consider the NIP case.
		
		$(\Leftarrow)$ Suppose $Age(T)$ has IP, as witnessed by the formula $\phi(\xbar, \ybar)$. By compactness, there is a  model $N$ of the universal theory of $T$ in which $\phi$ encodes the generic bipartite graph. But then $N$ is a substructure of some $M \models T$, and naming a copy of $N$ in $M$ by a unary predicate $U$ and relativizing $\phi$ to $U$ gives a unary expansion of $M$ with IP.
		
		$(\Ra)$ Suppose $T$ is not monadically NIP, witnessed by a tuple-coding configuration $\II = (\abar_i : i \in I), \JJ = (\bbar_j: j \in J)$, $C = \set{c_{i,j}| i \in I, j \in J}, \phi(\xbar, \ybar, z)$, with $\phi$ quantifier-free and containing parameters $\mbar$. By Remark \ref{r:tidy}, we may also assume the configuration is tidy. For any bipartite graph $G$, let $M_G \in Age(T)$ contain $\mbar$, tuples from $\II$ and $\JJ$ corresponding to the two parts of $G$, and an element of $c_{i,j}$ for each edge of $G$ so that $R^*(\xbar, \ybar; \mbar) := \exists z \in C (\phi(\xbar, \ybar, z; \mbar))$ encodes $G$ on $\II(M_G) \times \JJ(M_G)$. But by tidiness, $R(\xbar, \ybar; \mbar) := \exists z (\phi(\xbar, \ybar, z; \mbar) \wedge z \not\in \mbar)$ encodes $G$  on $\II(M_G) \times \JJ(M_G)$ as well.
		
		For the stable case, the backwards direction is the same except using the infinite half-graph in place of the generic bipartite graph. For the forwards direction, if $T$ is unstable then by quantifier-elimination $Age(T)$ is also unstable. If $T$ is stable but not monadically stable, then by \cite{BS}*{Lemma 4.2.6} $T$ is not monadically NIP, so we are finished by the NIP case.
	\end{proof}

\appendix

\def\sigmai{\sigma^{(i)}}

\section{Corrigenda}

\subsection{Indiscernible triviality} \label{app:indtriv}  In Theorem 1.1 of \cite{MonNIP}, several equivalents of a theory being monadically NIP are given.  With the definition of indiscernible-triviality given there, (6) is not equivalent, as can be seen by Example~\ref{example} below.  However, by making a slight variation on the definition of indiscernible-triviality the equivalence of (6) with the other properties is maintained.   Call a linear order $(I,<)$ {\em endless} if it has neither a minimum nor a maximum element.  Clearly, any endless linear order is infinite.   

\begin{definition} \label{newIT}  A theory $T$ has  {\em endless indiscernible triviality} if for every {\bf endless} indiscernible sequence $\II=(\abar_i:i\in I)$ and every set $B$ of parameters, if $\II$ is indiscernible over each $b \in B$ then $\II$ is indiscernible over $B$.
\end{definition}

This is the same as the definition of indiscernible-triviality, except that {\em infinite} has been replaced by {\em endless}. 

With this note, we prove the following theorem.

\begin{theorem} \label{remain} {\em Replacing indiscernible-triviality by endless indiscernible triviality, the six statements described in \cite[Theorem~1.1]{MonNIP} are equivalent.}
\end{theorem}

Before launching into the proof of Theorem~\ref{remain}, we highlight what the problem was in \cite{MonNIP}.
The first issue is the Furthermore clause in \cite[Lemma 2.18]{MonNIP}, used in the proof of \cite[Proposition 3.11]{MonNIP}.   We thank James Hanson for providing a counterexample to this clause. The second issue is that in the proof of \cite[Proposition 3.11]{MonNIP}, we assumed that the failure of indiscernible triviality could be witnessed by a $\Q$-indexed sequence, obliterating the distinction between indiscernible triviality and endless indiscernible triviality. To see that (full) indiscernible triviality can fail in a monadically NIP theory, we thank Artem Chernikov for indicating the following example.  

\begin{example} \label{example}
Let $T_{dt}$ be the theory of dense meet-trees as in \cite[Section 2.3.1]{Pierre}. By \cite[Corollary 2.8]{parigot1982theories}, $T_{dt}$ is monadically NIP. (It is also fairly easy to check the quantifier-free type-counting criterion in \cite[Proposition 4.8]{MonNIP} over indiscernible sequences of singletons, which is sufficient.) Let $M \models T_{dt}$, let $\II = (a_i : i \in \omega)$ be a decreasing sequence, and let $b,b' \in M$ be such that $b,b' > a_0$ and $b \wedge b' = a_0$. Then $\II$ is indiscernible over $b$ and over $b'$, but not over $bb'$.
\end{example}

In the remainder of this section, we prove Theorem~\ref{remain}  and indicate     where the endlessness assumption  is used.  The following definition, which appears in \cite{Pierre}, is standard.

\begin{definition}  Two sequences $(\abar_i:i\in I)$ and $(\bbar_j:j\in J)$ (not necessarily of the same arities) are {\em mutually indiscernible over $C$} if
$(\abar_i:i\in I)$ is indiscernible over $C\cup\bigcup\{\bbar_j:j\in J\}$ and $(\bbar_j:j\in J)$ is indiscernible over $C\cup\bigcup\{\abar_i:i\in I\}$.
\end{definition}

In \cite{MonNIP}, in order to recover Theorem 1.1, it suffices to recover Proposition~3.11, so in the notation there, define 
$$(2^*) \quad \hbox{$T$ is dp-minimal and has endless indiscernible triviality}$$
which is identical to the existing (2), but now with endless indiscernible triviality replacing indiscernible-triviality.  

Again in the notation of Proposition 3.11, we must show that $(2^*)\Rightarrow (3)$ and that $(1)\Rightarrow (2^*)$.

The existing proof that $(2) \Rightarrow (3)$ is easily modified to show $(2^*)\Rightarrow (3)$. The only issue is that the convex piece $I'$ containing $\abar_i$ might not be endless. But in this case, the convex piece containing $\abar_j$ must be $I \bs I'$, which is endless. So we may conclude the argument substituting $\abar_j$ for $\abar_i$ and $I \bs I'$ for $I'$.

Establishing the implication $(1) \Ra (2^*)$ is more involved, where $(1)$ states that $T$ has the f.s.\ dichotomy. Without going through the problematic $(2)$, the paper still contains a proof of  $(1) \Ra (4)$, where $(4)$ asserts that there $T$ does not admit a pre-coding configuration.  Before tracing this proof, we recall the relevant definitions from \cite{MonNIP}.

\begin{definition} \label{def:fs dich}  $T$ has the {\em f.s.\ dichotomy} if, for all models $M$, all finite tuples $\abar,\bbar \in \C$, if $\tp(\bbar/M\abar)$ is finitely satisfied in $M$, then for any singleton $c$,
	either $\tp(\bbar/M\abar c)$ or $\tp(\bbar c/M\abar)$ is finitely satisfied in $M$.
\end{definition}

\begin{definition} \label{precoding}
	A {\em pre-coding configuration} consists of a $\phi(\xbar,\ybar,z)$ with parameters and a sequence $\I=\<\dbar_i:i\in \Q\>$, indiscernible over the parameters of $\phi$, such that for some (equivalently, for every) $s<t$ from $\Q$, there is $c\in\C$ such that
	\begin{enumerate}
		\item  $\C\models\phi(\dbar_s,\dbar_t,c)$;
		\item  $\C\models \neg\phi(\dbar_s,\dbar_v,c)$ for all $v>t$; and
		\item $\C\models \neg\phi(\dbar_u,\dbar_t,c)$ for all $u<s$.
	\end{enumerate}
\end{definition}

In \cite[\S 4]{MonNIP}, it is proved (without using \cite[Proposition 3.11]{MonNIP}) that if $T$ has the f.s. dichotomy, then $T$ does not admit coding on tuples, which is condition $(3)$ in \cite[Proposition 3.11]{MonNIP}. Thus the implication $(3) \Rightarrow (4)$ in \cite[Proposition 3.11]{MonNIP} shows that if $T$ has the f.s. dichotomy then $T$ does not admit a pre-coding configuration. (We take this opportunity to note that after the first sentence in the proof of $(3) \Rightarrow (4)$ in \cite[Proposition 3.11]{MonNIP}, the following should be inserted: ``By Ramsey and compactness, we may assume that the truth value of $\phi(\abar_i, \abar_j, c_{k, \ell})$ depends only on the order-type of $ijk\ell$.'')

Evidently, the existence of a pre-coding configuration is a statement about a certain configuration being consistent with $T$, hence one can  use Compactness to construct such configurations from many variations.  We record two variants in the following lemma.

\begin{lemma}  \label{precodingequivalents}  $T$ admits a pre-coding configuration if either of the following hold:
\begin{enumerate}
\item  There is a sequence $(\dbar_i:i\in\Z)$ (not necessarily indiscernible) and a formula $\phi(\xbar,\ybar,z)$ such that, for {\bf every} $s<0<t$ there is $h_{s,t}\in\C$ such that
\begin{itemize}
\item  $\models \phi(\dbar_s,\dbar_t,h_{s,t})$: 
\item   $\models \neg \phi(\dbar_u,\dbar_t,h_{s,t})$ for  every $u<s$;  and
\item  $\models \neg \phi(\dbar_s,\dbar_v,h_{s,,t})$ for every $v>t$.
\end{itemize}
\item Or there is an indiscernible sequence $(\dbar_i:i\in \Z)$ and a formula $\phi(\xbar,\ybar,z)$ such that, for {\bf some} $s<0<t$ there is $h_{s,t}\in\C$ such that
\begin{itemize}
\item  $\models \phi(\dbar_s,\dbar_t,h_{s,t})$: 
\item   $\models \neg \phi(\dbar_u,\dbar_t,h_{s,t})$ for  every $u<s$;  and
\item  $\models \neg \phi(\dbar_s,\dbar_v,h_{s,t})$ for every $v>t$.
\end{itemize}
\end{enumerate}
\end{lemma}

\begin{proof}  (1)  is immediate by Compactness.  For (2), we first extend our given indiscernible sequence $(\dbar_i:i\in\Z)$ to an indiscernible sequence
$(\dbar_i:i\in\Q)$, maintaining the extra conditions that $\neg\phi(\dbar_i,\dbar_t,h_{s,t})$ for all $i<s$, $i\in \Q$ and that $\neg\phi(\dbar_s,\dbar_i,h_{s,t})$ for all $i>t$, $i\in\Q$
in three steps, all using Compactness.  First, since $(\dbar_i:i<s,i\in\Z)$ is an infinite, indiscernible sequence over $(\dbar_i:i\ge s)$, for which $\neg\phi(\dbar_i,\dbar_t,h_{s,t})$
for every such $i$, by Compactness there is an extension of this segment to $(\dbar_i:i<s,i\in\Q)$ maintaining indiscernibility of the entire expanded sequence, as well
as $\neg\phi(\dbar_i,\dbar_t,h_{s,t})$.  Dually,  we can find an extension $(\dbar_i:i>t,i\in\Q)$  of $(\dbar_i:i>t, i\in\Z)$
maintaining indiscernibility with $\neg\phi(\dbar_s,\dbar_i,h_{s,t})$ for every $i>t$, $i\in\Q$.  
Finally, for the middle segment $(\dbar_i:s<i<t, i\in \Z)$,  we only need to maintain indiscernibility.  Although the sequence $(d_i:s<i<t, i\in\Z)$ is finite,
it is part of an endless indiscernible sequence.  Thus, it follows by Compactness that there is an extension  $(\dbar_i:s<i<t,i\in\Q)$ of $(\dbar_i:s<i<t, i\in\Z)$,
for which the entire sequence $(\dbar_i:i\in\Q)$ is indiscernible.
So we have constructed an indiscernible sequence $(\dbar_i:i\in\Q)$ with some distinguished  pair $s<t$ for which a witnessing element $h_{s,t}$ exists.
However, as $Aut(\Q,<)$ is 2-homogeneous and since every $\sigma\in Aut(\Q,<)$ induces an automorphism of $\C$, we conclude that for every $s'<t'$, a witnessing
element $h_{s',t'}$ exists.  Thus, we obtain a pre-coding configuration.
\end{proof}

We now assume $T$ has the f.s.\ dichotomy.   The proof that $T$ is dp-minimal in the existing proof of $(1)\Ra(2)$ in \cite[Proposition~3.11]{MonNIP} is unchanged. In fact, the proof gives the following stronger statement.
\begin{lemma} \label{dpmin+}
	If $T$ has the f.s. dichotomy, then for every indiscernible sequence $\II = (\abar_i : i \in I)$ and singleton $b$, there is a partition $I = I_0 ^\smallfrown I_1 ^\smallfrown I_2$ where $I_1$ is either empty or a singleton, such that $(\abar_i : i \in I_0)$ and $(\abar_i : i \in I_2)$ are mutually indiscernible over $b\II_1$.
	
	In the case where $I$ is Dedekind complete, we may assume $I_1$ is a singleton.
\end{lemma}
We will assume that $T$ has the f.s. dichotomy but fails endless indiscernible triviality and eventually arrive at one of the two clauses of Lemma~\ref{precodingequivalents}, giving our contradiction.
Endlessness of $(I,<)$ is crucial as once we cut the indiscernible sequence $\II$ into two mutually indiscernible halves, we
still have that each half is an infinite indiscernible sequence and thus can be extended.  In a nutshell, this extendibility of each half is what is failing in Example~\ref{example}.

\begin{lemma}  \label{Ded}  Suppose $T$ has the f.s.\ dichotomy, $(I,<)$ is an endless, Dedekind complete linear order, $\II=(\abar_i:i\in I)$ is indiscernible over $\emptyset$, but not
over $b$ for some singleton $b\in\C$.  Then there are $i^*\in I$, a finite $F\subseteq \C$, and an $F$-definable $\delta(\xbar,y)$ such that
\begin{enumerate}
\item  $(\abar_i:i\in I)$ is indiscernible over $F$;
\item  The subsequences $(\abar_i:i<i^*)$ and $(\abar_i:i>i^*)$ are mutually indiscernible over $Fb\abar_{i^*}$;
\item  The sequence of truth values of $(\delta(\abar_i,b):i\in I)$ is not constant.
\end{enumerate}
\end{lemma}

\begin{proof} 
Since $\II$ is not indiscernible over $b$, we apply Lemma \ref{dpmin+}, and let $i^*$ be the singleton element of $I_1$ there.
		Since $(I,<)$ is endless, choose $i^*_-,i^*_+\in I$ with $i^*_-< i^*< i^*_+$.
		 Choose a formula $\phi(\xbar_1,\dots,\xbar_n,b)$ witnessing that  $\II$ is not indiscernible over $b$.
		 By mutual indiscernibility over $\abar_{i^*}b$ there must be some $1\le k\le n$ for which:
		 for some/every $i_1<i_2<\dots<i_{k-1}<i^*_-$, for some/every $i^*_+<i_{k+1}<\dots<i_n$,
		 the truth values of  the three statements 
		 \begin{itemize} 
		 \item $\phi(\abar_{i_1},\dots,\abar_{i_{k-1}},\abar_{i^*_-},\abar_{i_{k+1}},\dots,\abar_{i_n},b)$;
		 \item $\phi(\abar_{i_1},\dots,\abar_{i_{k-1}},\abar_{i^*},\abar_{i_{k+1}},\dots,\abar_{i_n},b)$;
		 \item $\phi(\abar_{i_1},\dots,\abar_{i_{k-1}},\abar_{i^*_+},\abar_{i_{k+1}},\dots,\abar_{i_n},b)$
		 \end{itemize}
		 are non-constant.  Let $I':=(\ell_1,\dots,\ell_{k-1})\smallfrown I\smallfrown (r_{k+1},\dots,r_{n})$ extend $I$.
		 By Compactness, choose $n-1$ new tuples $(\abar_{\ell_1},\dots,\abar_{\ell_{k-1}})$, $(\abar_{r_{k+1}},\dots,\abar_{r_n})$
		 such that the extended sequences $(\abar_i:i<i^*, i\in I')$ and $(\abar_i:i>i^*, i\in I')$ remain mutually indiscernible over $\abar_{i^*}b$.
		 Put $$F:=\bigcup\{\abar_{\ell_i}:1\le i\le k-1\}\cup\bigcup \{\abar_{r_{i}}:k+1\le i\le n\}$$ and let $\delta(\xbar,y):=\phi(\abar_{\ell_1},\dots,\abar_{\ell_{k-1}},\xbar,
		 \abar_{r_{k+1}},\dots, \abar_{r_n},y)$.
		 This works.
		 \end{proof}
		 
		 \begin{lemma}
			Suppose $T$ has the f.s. dichotomy. Then $T$ has endless indiscernible triviality.
		 \end{lemma}
		 \begin{proof}
		 Assume, by way of contradiction, that $T$ fails endless indiscernible triviality.  An easy induction on $|B|$ gives that there is some finite $A$ and singletons $b,c$
		 for which some endless $(I,<)$ supports  a sequence $(\abar_i:i\in I)$ that is indiscernible over $Ab$ and $Ac$, but not over $Abc$.  
		 By adding constants to the language we may assume $A=\emptyset$ and, as $(\Z,<)$ embeds into any endless linear order, we may take $I=\Z$.
		 Summarizing,  we assume  the existence of a sequence $(\abar_i:i\in\Z)$ that is indiscernible over $b$ and $c$ individually, but not over $bc$.  
		 Now, working over $c$, apply Lemma~\ref{Ded} to this sequence and $b$ to obtain $i^*\in\Z$, a finite set $F$ and an $Fc$-definable $\delta(\xbar,b)$ as there.
		 To make the dependence on $c$ explicit, write $\delta$ as $\delta(\xbar,y,c)$, so $\delta(\xbar,y,z)$ is $F$-definable.  
		 As $(\Z,<)$ is transitive, we may assume $i^*=0$.  We summarize the situation from the point of view of $b$, which we now label as $b_0$.
		 We have:
		 \begin{enumerate}
		 \item  $(\abar_i:i\in\Z)$ is indiscernible over $Fc$; and 
		 \item for $b=b_0$, we have
		 \begin{enumerate}
		 \item  $(\abar_i:i\in\Z)$ is indiscernible over $Fb_0$;
		 \item  $(\abar_i:i<0)$ and $(\abar_i:i>0)$ are mutually indiscernible over $Fcb_0$;
		 \item  The truth value of $(\delta(\abar_i, ,b_0,c):i\in\Z)$ is non-constant.
		 \end{enumerate}
		 \end{enumerate}
		 
		 Because of (1), there is an automorphism $\sigma$ of $\C$ fixing $Fc$ with $\sigma(\abar_i)=\sigma(\abar_{i+1})$ for all $i\in Z$.  Let $b_j:=\sigma^j(b)$, the $j$-fold
		 iteration of $\sigma$ (this also makes sense for $j=0$ and $j<0$).  Thus, with the same $Fc$ and $(\abar_i:i\in\Z)$, we have that for every $j\in\Z$,
		 $(\abar_i:i\in Z)$ is indiscernible over $Fb_j$: $(\abar_i:i<j)$ and $(\abar_i:i>j)$ are mutually indiscernible over $Fcb_j$; and the truth value of
		 $(\delta(\abar_i,b_j,c):i\in \Z)$ is non-constant. We remark that we have again made crucial use of the endlessness of our indiscernible sequence to extend $b$ from a singleton to a whole sequence.
		 
		 Now, keeping $Fc$ fixed, we `couple' each $b_j$ by its corresponding $\abar_j$, and then by Ramsey's Theorem and Compactness we get that for any endless $(J,<)$
		 there are tuples $(\abar_j b_j:j\in J)$ (possibly distinct from the original elements) satisfying the following conditions:
		 \begin{enumerate}
		 \item The sequence  $((\abar_j b_j):j\in J)$ is indiscernible over $Fc$;
		 \item  For all $j\in J$,
		 \begin{enumerate}
		 \item  The sequence $(\abar_i:i\in J)$ is indiscernible over $Fb_j$; 
		 \item  The subsequences $(\abar_i:i<j)$ and $(\abar_i:i>j)$ are mutually indiscernible over $Fcb_j$; and
		 \item  The truth values of $(\delta(\abar_i,,b_j,c):i\in J)$ is non-constant.
		 \end{enumerate}
		 \end{enumerate}
		 
		 The following claim will allow us to define a pre-coding configuration.
		 
		 \begin{claim} 
	There is a sequence $(\dbar_r b_r:r\in\R)$ that is indiscernible over $Fc$ with $(\dbar_r:r\in\R)$ indiscernible over $Fb_0$ and an $F$-definable formula $\psi(\xbar,y,z)$  
		such that 
	\begin{enumerate}  
		\item  For all $r,s\in\R$,  $\models \psi(\dbar_r,b_s,c)$ if and only if $r=s$; and
		\item  For every singleton $c' \in \C$ and $r\in \R$, there is at most one $s\in\R$ such that $\models \psi(\dbar_s, b_r,c')$.
	\end{enumerate}
	\end{claim}

	\begin{claimproof} Consider the sequence $(\abar_jb_j:j\in J)$ obtained above with $J = 3 \times \R$.  
	As notation, for each $r\in\R$ write each `triple' as
	$(\abar_{r_-},\abar_r,\abar_{r_+})$ and let $\dbar_r:=\abar_{r_-}\abar_r\abar_{r_+}$ be the concatenation of the triple.  In what follows
	we only consider $b_r$ for each $r\in\R$.  Finally, put 
	$$\psi(\xbar_-\xbar\xbar_+,y,z):=\neg\left[\delta(\xbar_-,y,z)\leftrightarrow\delta(\xbar,y,z)\leftrightarrow\delta(\xbar_+,y,z)\right]$$
	Thus, we have $(\dbar_r b_r:r\in\R)$ is indiscernible over $Fc$, and for each $r\in \R$ we have  $(\dbar_s:s\in\R)$ is indiscernible over $Fb_r$ and
	the pair of subsequences  
	$(\dbar_s:s<r)$ and $(\dbar_s:s>r)$ are mutually indiscernible over $Fcb_r$.  Moreover,
	 for any $r,s\in\R$, $\models \psi(\dbar_s,b_r,c)$  if and only if $r=s$.  
	 
	 To get the final clause, choose any $c'\in\C$ and $r\in \R$.  We know the  original sequence $(\abar_i:i\in 3\times\R)$ is indiscernible over $Fb_r$.  If it is also
	 indiscernible over $Fb_rc'$, then the truth value of $(\delta(\abar_i,b_r,c'):i\in 3\times \R)$ is constant, hence 
	  $\models \neg\psi(\dbar_s,b_r,c')$ holds for every $s\in\R$.  On the other hand, if it fails to be indiscernible over $Fb_rc'$, then, working
	 over $Fb_r$, we apply Lemma~\ref{Ded} to the sequence $(\abar_i:i\in 3\times\R)$.  Choose $i^*\in 3\times \R$ for which
	 the subsequences $(\abar_i:i<i^*)$ and $(\abar_i:i>i^*)$ are mutually indiscernible over $Fb_rc'$.  Choose $s\in\R$ such that $i^*\in\{s_-,s,s_+\}$.
	 Then for any $t\neq s$, with $t\in\R$, the triple $(t_-,t,t_+)$ lies in one of the two subsequences.  Thus, by indiscernibility we have
	 $\models \neg\psi(\dbar_t,b_r,c')$ for all $t\neq s$.
	 \end{claimproof}

	Continuing, as $(\dbar_i:i\in\R)$ is indiscernible over  $b_0$, choose an automorphism $\sigma\in Aut(\C)$ such that $\sigma(\dbar_j)=\dbar_{j+1}$ for every $j\in\R$, and also $\sigma(b_0)=b_0$.
	For  each $i\in\Z^+$, let $\sigmai$ denote the $i$-fold composition of $\sigma$, so e.g., $\sigmai(\abar_j)=\abar_{j+i}$, while $\sigmai(b_0)=b_0$.
	As notation, put $c_i:=\sigmai(c)$.
	
	For each $m\in\Z^+$, let $B_m=\set{j\in(-\infty,0) | \models \psi(\dbar_j,b_j,c_m)}$.
	There are now two cases, each of which leads to a pre-coding configuration.
	
	\medskip\noindent
	{\bf Case 1.}  Some $B_m$ is not well ordered.
	
	\medskip
	
	Fix such an $m \in \Z^+$.  Fix a strictly decreasing sequence $J=(j_n:n\in\omega)$ from $B_m$ and put
	$I:=(i\in \Z^+ : i\ge m)$. Thus $K:=J^\smallfrown 0^\smallfrown I$ describes a subordering of $(\R,<)$ of order type $(\Z,<)$.  
	For $k\in K$, let  $\ebar_k$ denote the concatenation $\dbar_k b_k$ and let 
	$\theta(\xbar_1y_1,\xbar_2 y_2,z):=\psi(\xbar_2,y_1,z)$.  That $(\ebar_k : k \in K)$ and $\theta$ satisfy the hypotheses of Fact~\ref{precodingequivalents}(2) with $s=0$, $t=m$,
	and $h_{s,t}=c_m$
	follows from the following
	claim.  
	
\begin{claim} 
			\leavevmode
	\begin{enumerate} 
		\item  $\models \psi(\dbar_m,b_0,c_m)$;
\item $\models \neg \psi(\dbar_i,b_0,c_m)$ for all $i > m$;
\item $\models \neg \psi(\dbar_m,b_j,c_m)$ for all $j \in J$.
	\end{enumerate}
\end{claim}

	\begin{claimproof} 
		(1)  We have $\models \psi(\dbar_0,b_0,c)$, hence applying $\sigma_m$ gives $\models \psi(\sigma_m(\dbar_0),\sigma_m(b_0),\sigma_m(c))$, i.e., $\models \psi(\dbar_m,b_0,c_m)$.

		(2) We know that for any $k>0$, $\models \neg\psi(\dbar_k,b_0,c)$, so applying $\sigma_m$ yields
		$\models \neg \psi(\dbar_{k+m},b_0,c_m)$.

		(3)
		Since $j\in B_m$, we have $\models \psi(\dbar_j,b_j,c_m)$.  But then by the final clause of Claim~1,  we have $\models \neg\psi(\dbar_m,b_j,c_m)$ for $m \neq j$.
	\end{claimproof}
	
	\medskip\noindent
	{\bf Case 2.}  Not Case 1, i.e., every $B_m$ is well ordered.    
	
	\medskip
	
	In this case, for any $i\in\Z^+$, the shifted set 
	$$B_m+i=\set {r\in (-\infty,0)| r=b+i\ \hbox{for some  $b\in B_m, i\in \Z^+$}}$$ is well ordered as well.
	Since any well-ordered subset of $(-\infty,0)$ is nowhere dense, it follows by Baire category that the complement
	$$S=\set{r\in (-\infty,0) | r\not\in B_m+i\ \hbox{for every $i,m\in\Z^+$}}$$ 
	is not nowhere dense.  Thus, $S$ contains a strictly decreasing sequence $J=(j_n:n\in\omega)$, so
	 $K:=J^\smallfrown 0 ^\smallfrown \Z^+$ has order type $(\Z,<)$.  For each $k\in K$, let $\ebar_k$ denote the concatenation $\dbar_k c_k$, let $b_{i,j} := \sigmai(b_{j-i})$, and let $\theta(\xbar_1 z_1,\xbar_2 z_2,y):=\psi(\xbar_1,y,z_2)$.
	Here, we will get an instance of pre-coding via Fact~\ref{precodingequivalents}(1), as witnessed by $(\ebar_k:k\in K)$, $\theta$, and the witnesses $b_{i,j}$ for $j<0<i$ from $K$,
	once we establish the following claim.
	
	\begin{claim}  For every $j\in S$ and $i\in\Z^+$, the following hold.
				\leavevmode
	\begin{enumerate}
		\item    $\models \psi(\dbar_j, b_{i,j},c_i)$;
		\item  For all $j'\in S\setminus\set{j}$, $\models \neg\psi(\dbar_{j'}, b_{i,j},c_i)$; and
		\item  For all $\ell>i$, $\models \neg\psi(\dbar_j, b_{i,j},c_\ell)$.
	\end{enumerate}
\end{claim}
	
	\begin{claimproof}  (1) and (2).  By Claim 1(1), we have $\models \psi(\dbar_{j-i},b_{j-i},c)$ and for any $t\neq j-i$ we have $\models \neg\psi(\dbar_t,b_{j-i},c)$.
		Applying $\sigmai$ yields $\models \psi(\dbar_j,b_{i,j},c_i)$, but  $\models \neg\psi(\dbar_{j'},b_{i,j},c_i)$ for any $j'\neq j$.
		
		For (3), given $\ell>i$, put $m:=\ell-i$.  Since $j\in S$, $j-i\not\in B_m$, so $\models \neg\psi(\dbar_{j-i},b_{j-i},c_m)$.
		Then, applying $\sigmai$ (and using $c_\ell=\sigmai(c_m)$) yields $\models \neg\psi(\dbar_j,b_{i,j},c_\ell)$, as required.
	\end{claimproof}
\end{proof}

\subsection{Well-quasi-order} \label{app:wqo}
In \cite[Theorem 5.3]{MonNIP}, the proof that  $Age(T)$ is not 4-wqo is flawed. The issue is that the formula $\phi^*(x,y,z)$ is not existential, and thus neither is the formula $\exists z \phi^*(x,y,z)$ that we use to define the edges of our graphs. Since the formula is not existential, it need not be preserved by embeddings. Thus the first sentence of the last paragraph of the proof is unjustified.

	 \bibliographystyle{alpha}
	\bibliography{Bib}

\end{document}